\documentclass[a4paper, 12pt]{article}
\usepackage{tgheros}
\usepackage{amsthm}
\newtheorem{thm}{Theorem}
\newtheorem{cor}[thm]{Corollary}

\newtheorem{lemma}[thm]{Lemma}

\setlength{\textwidth}{15cm}
\setlength{\oddsidemargin}{0cm}
\setlength{\evensidemargin}{0cm}
\setlength{\voffset}{0cm}
\setlength{\hoffset}{0.5cm}
\setlength{\topmargin}{1cm}
\setlength{\headheight}{0cm}
\setlength{\headsep}{0cm}
\setlength{\textheight}{21.7cm}
\setlength{\footskip}{2.3cm}
\setlength{\parindent}{0pt}

\usepackage{amssymb,amsmath}

\newtheorem*{theorem*}{Theorem}

\DeclareMathOperator{\F}{\mathbb{F}}

\begin{document}
\baselineskip=16.5pt
\parskip=15pt

\begin{center}
\section*{Invariant Rational Functions, Linear Fractional Transformations and Irreducible 
Polynomials over Finite Fields}

{\large 
Rod Gow and Gary McGuire\footnote{email {\tt gary.mcguire@ucd.ie}}
 \\ { \ }\\
School of Mathematics and Statistics\\
University College Dublin\\
Ireland}
\end{center}

\bigskip

 \subsection*{Abstract}
 
 For a  subgroup of $PGL(2,q)$ we show how some irreducible polynomials over $\mathbb{F}_q$ arise
 from the field of invariant rational functions.
 The proofs rely on two actions of $PGL(2,F)$, one  on the projective line over  a field $F$
 and the other on the rational function field $F(x)$.
 The invariant functions in $F(x)$  are used to 
 show that regular patterns exist in the factorization of certain
 polynomials into irreducible polynomials.
We use some results about group actions and the orbit polynomial, whose proofs
are included.
An unusual connection to the conjugacy classes of $PGL(2,q)$ is shown.
At the end of the paper we present an alternative approach, using Lang's theorem on algebraic groups.
 
 \newpage

\section{Introduction}
  
This work began with seeking an explanation for  patterns in certain polynomial 
factorizations over finite fields. For example,  
factoring $T^{20}-T^{19}+T+1$ over $\F_{19}$ into irreducible factors gives the following factors:
 \begin{align*}
 T^4 + 6T^3 &-6T^2 + 13T +1\cr
 T^4 + 9T^3 &-6T^2 + 10T + 1\cr
 T^4 + 12T^3 &-6T^2 + 7T + 1\cr
 T^4 + 14T^3 &-6T^2 + 5T + 1\cr
 T^4 + 15T^3 & -6T^2 + 4T + 1\cr
 \end{align*}

Each of these factors has the form $T^4-\lambda T^3-6T^2+\lambda T+1$.
We wondered why this pattern appears in every factor, and if it was a coincidence.

We will prove the following theorem which explains the pattern in this example.
This statement is a summary; for a more precise statement see Theorem  \ref{main3b} and
Corollary \ref{main1} later, 
and for the definition of the orbit polynomial and linear 1-parameter family see Section \ref{orbitp}.

 \begin{theorem*}
 Let $G$ be a cyclic subgroup 
 of $PGL(2,q)$ of order $r>2$ generated by $s(x)=\frac{ax+b}{cx+d}$,
 where  $r$ divides $q+1$.
 Then the polynomial $cT^{q+1}+dT^q-aT-b $ factors into 
 irreducible factors that all come from a linear 1-parameter family of polynomials of degree $r$ in $ \F_q [T]$.
 This family arises from the orbit polynomial of $G$ by specialization.
 \end{theorem*}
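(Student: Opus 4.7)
The plan begins with the observation that the roots of $P(T) := cT^{q+1} + dT^q - aT - b$ in $\overline{\mathbb{F}_q}$ are exactly those $\alpha$ satisfying $\alpha^q = s(\alpha) = (a\alpha+b)/(c\alpha+d)$, since clearing denominators in this identity recovers $P(\alpha)=0$. Thus the $q$-power Frobenius restricts, on the root set of $P$, to the permutation induced by $s$, so the Frobenius orbits on these roots coincide with the orbits of $G=\langle s\rangle$.

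Next I would argue that every such orbit has full size $r$, yielding an irreducible $\mathbb{F}_q$-factor of $P$ of degree exactly $r$. Because $r>2$ divides $q+1$, the element $s$ is a non-split semisimple element of $PGL(2,q)$: its two fixed points on $\mathbb{P}^1(\overline{\mathbb{F}_q})$ are Galois-conjugate and lie in $\mathbb{F}_{q^2}\setminus\mathbb{F}_q$. An orbit of size less than $r$ would produce a root of $P$ fixed by some $s^i$ with $1\le i<r$; by the identification of Frobenius with $s$ on the root set, that root would then lie in $\mathbb{F}_q$, contradicting the fact that every nontrivial power of $s$ has fixed points strictly outside $\mathbb{F}_q$.

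To identify the factors themselves, I would use the orbit polynomial $\Phi(T,x) := \prod_{g\in G}(T-g(x)) \in \mathbb{F}_q(x)[T]$. Its coefficients are $G$-invariant and so lie in the invariant subfield $\mathbb{F}_q(x)^G$, which by L\"uroth's theorem is $\mathbb{F}_q(u)$ for some generator $u$. For each root $\alpha$ of $P$, the product $\prod_{g\in G}(T-g(\alpha))$ is the corresponding irreducible factor of $P$, namely the specialization of $\Phi$ at $x=\alpha$. Since $u$ is $G$-invariant and defined over $\mathbb{F}_q$, and Frobenius agrees with an element of $G$ on $\alpha$, we get $u(\alpha)^q=u(\alpha)$, so $u(\alpha)\in\mathbb{F}_q$. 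Consequently every irreducible factor of $P$ arises by specializing the parameter $u$ of $\Phi$ to some element of $\mathbb{F}_q$.

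The main obstacle I anticipate concerns the word \emph{linear}: to get a linear one-parameter family one must show that, for a suitable choice of the L\"uroth generator $u$, the coefficients of $\Phi(T,x)$, regarded as elements of $\mathbb{F}_q(u)$, are affine in $u$ rather than higher-degree rational functions. I expect this to follow from the explicit analysis of the orbit polynomial developed earlier in the paper, most likely by choosing $u$ adapted to the eigenvalue structure of $s$ over $\mathbb{F}_{q^2}$ and then computing the elementary symmetric functions of the $G$-translates of $x$ directly. Once this linearity is established, combining it with the orbit decomposition above yields the theorem.
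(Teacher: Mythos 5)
Your first three paragraphs follow essentially the same route as the paper (Lemma \ref{main3a}, Theorem \ref{main3b}, Corollary \ref{main1}): identify the roots of $cT^{q+1}+dT^q-aT-b$ with the solutions of $s(\alpha)=\alpha^q$, use the commutation of $s$ with Frobenius to conclude that Frobenius acts as $s$ on the root set, and deduce that each irreducible factor is the specialization $\prod_{g\in G}\bigl(T-g(\alpha)\bigr)$ of the orbit polynomial. One step is stated too loosely: from an orbit of size $i<r$ you get $s^i(\alpha)=\alpha$, hence $\alpha\in\F_{q^i}$, not directly $\alpha\in\F_q$. To land in $\F_q$ you need the intermediate fact (Lemma \ref{powersfix}) that a non-identity power $s^i$ has the same fixed points as $s$ itself, whence $s(\alpha)=\alpha=\alpha^q$, and only then does Lemma \ref{nofixed} supply the contradiction. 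This is patchable with the paper's own lemmas.

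The genuine gap is the linearity claim, which you explicitly defer in your last paragraph but which is part of the statement being proved. The paper's argument (Lemma \ref{gens0}, Lemma \ref{gens1}, Corollary \ref{gens2}) is short and requires no choice of $u$ ``adapted to the eigenvalue structure of $s$'': write $s^i(x)=f_i(x)/g_i(x)$ and put all coefficients of the orbit polynomial over the common denominator $A(x)=\prod_i g_i(x)$, which has degree $<|G|$. Every nonconstant coefficient is then $B_j(x)/A(x)$ with $\deg B_j$ exactly $|G|$, because a nonconstant $G$-invariant rational function has degree at least $[\F_q(x):\F_q(x)^G]=|G|$ and the construction caps the degree at $|G|$. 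Given two such coefficients, scale one so the numerators share the same leading coefficient; the difference is a $G$-invariant rational function of degree $<|G|$, hence a constant. Thus every nonconstant coefficient equals $\lambda u+\mu$ for a single generator $u$ and scalars $\lambda,\mu\in\F_q$, which is exactly the linear $1$-parameter structure. Without this degree argument your proof establishes a $1$-parameter family of specializations but not a \emph{linear} one, so you should either supply this computation or cite it as the essential missing lemma.
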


 This paper is organized as follows.
 Section 2 presents a quick summary of some background.
 In Section \ref{pglactp1} we  present results about the orbits and fixed points of the action of
 a subgroup of $PGL(2,F)$ on $\mathbb{P}^1 (F)$,
 the projective line over $F$, where $F$ is a field.
 These are well-known topics, however we found it difficult to pinpoint in the
 literature the results we require so we have proved them here,
which also  makes the exposition self-contained.
We do not claim that any results in Section \ref{pglactp1} are new.
  In Section \ref{orbitp} we consider the orbit polynomial of a finite subgroup of  $PGL(2,F)$  and make some 
  basic observations. 
 We also briefly mention the connection to the Riemann-Hurwitz formula  in Section \ref{orbitp},
 although we do not use it.
 
 Section   \ref{xqplus1}   considers
 the cyclic subgroup of $PGL(2,q)$ generated by 
 $ x\mapsto \frac{ax+b}{cx+d}$, and its relation to the Frobenius automorphism.
 This naturally results in polynomials of the form $cT^{q+1}+dT^q-aT-b \in \F_q [T]$.
 There are many papers that discuss various aspects of these polynomials.
 Previous work on the factorization of such polynomials into irreducible factors 
  has been done by Stichtenoth and Topuzo\u{g}lu \cite{ST}.
 Our new contribution is the connection to the field of invariant rational functions,
 which implies a relationship between the  factors and the orbit polynomial;
 see the theorem stated above.
  The theorem shows that the form of the orbit polynomial
 implies certain patterns in the form of the  irreducible factors.
 The proof uses results on fixed points proved in Section  \ref{pglactp1}.

  It is well known that the maximal order of an element of  $PGL(2,q)$ is $q+1$.
  Section  \ref{xqplus1}  also
  focuses on cyclic subgroups of $PGL(2,q)$ whose order divides $q+1$.
  For such subgroups we have more precise information.
One theorem we will prove is the following (see Theorem \ref{orderq1}),
whose proof relies on Lemma \ref{nofixed} in Section  \ref{pglactp1}.

\begin{theorem*}
Let $G$ be a cyclic subgroup of $PGL(2,q)$ of order $q+1$.
Let $f(x)/g(x)$ be a generator for the field of $G$-invariant rational functions,
where $\deg(f)=|G|$ and $\deg(g)<|G|$.
Then $f(x)$ has the form $cx^{q+1}+dx^q-ax-b$.
\end{theorem*}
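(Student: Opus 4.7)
I would fix a generator $s(x) = (ax+b)/(cx+d)$ of $G$ with matrix $M = \begin{pmatrix} a & b \\ c & d \end{pmatrix}$, and set $P_s(x) := cx^{q+1} + dx^q - ax - b$; let $F := f/g$ denote the given generator of $\mathbb{F}_q(x)^G$. My plan has three stages: first, pin down $g$ via the $G$-orbit structure on $\mathbb{P}^1(\mathbb{F}_q)$; second, produce the concrete invariant $F_0 := P_s/(x^q-x)$ and show it generates $\mathbb{F}_q(x)^G$; third, express $F$ in terms of $F_0$ and read off the shape of $f$.

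By Lemma \ref{nofixed}, $G$ has no fixed points on $\mathbb{P}^1(\mathbb{F}_q)$, so its action there is free; since $|G| = q+1 = |\mathbb{P}^1(\mathbb{F}_q)|$, it is also transitive. The hypothesis $\deg f > \deg g$ forces $F(\infty) = \infty$, and $G$-invariance propagates this value to the entire orbit of $\infty$, which is all of $\mathbb{P}^1(\mathbb{F}_q)$. Hence $g(\alpha) = 0$ for every $\alpha \in \mathbb{F}_q$, and combined with $\deg g \le q$ this gives $g(x) = \mu(x^q - x)$ for some $\mu \in \mathbb{F}_q^\times$ (and incidentally $c \neq 0$, else $s$ would fix $\infty$). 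Next I would verify $G$-invariance of $F_0$ by direct substitution:
\[
P_s\bigl(s(x)\bigr) = \frac{(ad-bc)\,P_s(x)}{(cx+d)(cx^q+d)}, \qquad s(x)^q - s(x) = \frac{(ad-bc)(x^q-x)}{(cx+d)(cx^q+d)},
\]
so the common factor cancels and $F_0(s(x)) = F_0(x)$. The roots of $P_s$ satisfy $\alpha^q = s(\alpha)$ and so lie outside $\mathbb{F}_q$ (otherwise $\alpha$ would be a fixed point of $s$), giving $\gcd(P_s, x^q-x)=1$; thus $F_0$ has degree $q+1$ as a map $\mathbb{P}^1\to\mathbb{P}^1$, and since $[\mathbb{F}_q(x):\mathbb{F}_q(x)^G] = q+1$, we conclude $\mathbb{F}_q(F_0) = \mathbb{F}_q(x)^G$.

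Finally, $F$ and $F_0$ both generate the invariant field, so $F = \phi(F_0)$ for some $\phi \in PGL(2,\mathbb{F}_q)$; the common value $F(\infty)=F_0(\infty)=\infty$ forces $\phi$ to fix $\infty$, hence $\phi(y) = \lambda y + \mu$ with $\lambda \in \mathbb{F}_q^\times$, $\mu \in \mathbb{F}_q$. Expanding,
\[
f(x) = \lambda P_s(x) + \mu(x^q - x) = (\lambda c)\,x^{q+1} + (\lambda d + \mu)\,x^q - (\lambda a + \mu)\,x - \lambda b,
\]
which is precisely the asserted shape. The coefficient matrix $\lambda M + \mu I$ lies in $\mathbb{F}_q[M]$; since $M$ has irreducible characteristic polynomial over $\mathbb{F}_q$ (its fixed points lie in $\mathbb{F}_{q^2}\setminus\mathbb{F}_q$), one has $\mathbb{F}_q[M] \cong \mathbb{F}_{q^2}$ and $\mathbb{F}_q[M]^\times/\mathbb{F}_q^\times \cong G$, so $\lambda M + \mu I$ indeed represents an element of $G$. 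The main obstacle is the $G$-invariance computation for $F_0$: conceptually it is just expanding $P_s(s(x))$, but one has to keep track of the factors $(cx+d)$ and $(cx^q+d)$ carefully so as to see the cancellation cleanly.
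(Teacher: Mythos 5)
Your proof is correct, but it takes a genuinely different route from the paper's. The paper obtains $g(x)=\lambda(x^q-x)$ by writing $s^i(x)=f_i(x)/g_i(x)$ and showing (Lemma \ref{fqdenom}, via the no-fixed-point Lemma \ref{nofixed}) that the $q+1$ denominators $g_i$ are pairwise non-proportional, so their product is a scalar multiple of $\prod_{\alpha\in\F_q}(x-\alpha)$ (Lemma \ref{fqdenom2}); you get the same fact more softly from regularity and transitivity of $G$ on $\mathbb{P}^1(\F_q)$ together with $\Phi(\infty)=\infty$. For the numerator, the paper leans on Corollary \ref{main2} (itself resting on Theorem \ref{main3b} and the commuting Frobenius action): $p_s(T)=cT^{q+1}+dT^q-aT-b$ is irreducible and is the minimal polynomial of each of its roots $\alpha$, so a nonconstant coefficient $f_i(T)-c_i\lambda(T^q-T)$, having degree $q+1$ and vanishing at $\alpha$, must be proportional to $p_s$. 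You bypass all of that by exhibiting $p_s(x)/(x^q-x)$ directly as a $G$-invariant of degree $q+1$ via the identity $p_s(s(x))=(ad-bc)p_s(x)/\bigl((cx+d)(cx^q+d)\bigr)$, concluding it generates $\F_q(x)^G$, and then comparing two generators through a fractional transformation that must fix $\infty$. Your computation checks out, and the coprimality of $p_s$ with $x^q-x$ (no root of $p_s$ lies in $\F_q$, else $s$ would have an $\F_q$-rational fixed point) is correctly handled. What your approach buys is self-containedness — no appeal to the irreducibility of $p_s$ or to the minimal-polynomial machinery — plus the bonus observation that the coefficient matrix $\lambda M+\mu I$ lies in $\F_q[M]^\times$ and hence represents an element of $G$, a fact the paper establishes separately and by a different (eigenvalue and cyclic-stabilizer) argument in the proof of Theorem \ref{numberofr}. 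What the paper's route buys is that the same apparatus (Theorem \ref{main3b}) simultaneously yields the factorization results that are its real target, so the identification of $f$ comes essentially for free once that machinery is in place.
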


Hence polynomials of the form $cx^{q+1}+dx^q-ax-b$ arise naturally in the context of
invariant rational functions, as well as relating $PGL(2,q)$ action to Frobenius action.

In Section \ref{degorderG} we shall consider a family of 
generators for the field of invariant rational functions
$(f(x)-\lambda g(x))/g(x)$ for $\lambda\in \F_q$.
We will determine how these numerators factor into
irreducible factors using  the interaction of the $G$-action on the roots and the Frobenius action.
The $G$-action shows that the roots lie in a  $G$-orbit, but these roots can
 lie in different extensions of $\F_q$ for different values of $\lambda$.
We will prove the following (see Theorem \ref{falpha1}) using
a general principle of two commuting group actions,
 which we prove in Lemma \ref{comm_action2} in Section  \ref{pglactp1}.

\begin{theorem*}
Let $G$ be a subgroup of $PGL(2,q)$.
Let $\Phi(x)=f(x)/g(x)$ be a generator for the field of $G$-invariant rational functions,
where $\deg(f)=|G|$ and $\deg(g)<|G|$.
If $\alpha\in\overline{\F_q}$ and $\Phi(\alpha)\in \F_q$ 
then $f(T)-\Phi(\alpha) g(T)$ factors into irreducible factors of degree $r$,
where $r$ is the degree of the minimal polynomial of $\alpha$.
\end{theorem*}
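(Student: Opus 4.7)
My plan is to identify the roots of $f(T)-\Phi(\alpha)g(T)$ in $\overline{\F_q}$ as a single $G$-orbit containing $\alpha$, observe that the Frobenius commutes with the $G$-action on this orbit, and then invoke Lemma \ref{comm_action2} to conclude that every Frobenius orbit on the root set has the same size $r$.

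First I would show that $\beta\in\overline{\F_q}$ is a root of $f(T)-\Phi(\alpha)g(T)$ if and only if $\Phi(\beta)=\Phi(\alpha)$, and that the latter holds if and only if $\beta$ lies in the $G$-orbit of $\alpha$. Invariance of $\Phi$ under $G$ gives one direction; the other uses that $\Phi$ generates $\F_q(x)^G$ with $[\F_q(x):\F_q(\Phi)]=|G|$, so each fibre of $\Phi$ consists of a single $G$-orbit whose points, counted with multiplicity, total $|G|$. This total matches the degree $|G|$ of $f(T)-\Phi(\alpha)g(T)$, so there is no loss of roots.

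Next I would verify the two commuting actions on this root set. Frobenius $\sigma: y\mapsto y^q$ permutes the roots because $f,g\in\F_q[T]$ and $\Phi(\alpha)\in\F_q$. Each $\phi\in G\subseteq PGL(2,q)$ is a linear fractional transformation with coefficients in $\F_q$, and Frobenius fixes $\F_q$ pointwise, so a direct computation gives $\sigma(\phi(\beta))=\phi(\sigma(\beta))$. With $G$ acting transitively on the root set and $\langle\sigma\rangle$ acting commutatively with $G$, Lemma \ref{comm_action2} forces all $\sigma$-orbits to have a common size. Since the $\sigma$-orbit of $\alpha$ has size $r$ by hypothesis on the minimal polynomial, every $\sigma$-orbit has size $r$, which translates to each irreducible factor of $f(T)-\Phi(\alpha)g(T)$ over $\F_q$ having degree $r$.

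The main obstacle I anticipate is the ramified case, where the $G$-stabilizer of $\alpha$ is non-trivial and the roots of $f(T)-\Phi(\alpha)g(T)$ appear with multiplicity greater than one. One must check that the multiplicity is constant along the $G$-orbit, which is immediate since $G$ acts by $\F_q$-automorphisms of $\F_q(x)$ and hence permutes roots together with their multiplicities; the statement about irreducible-factor degrees is therefore unaffected. A minor related point is the case $\alpha=\infty$ or of $\Phi$ having a pole at $\alpha$, which is handled by passing to the projective line $\mathbb{P}^1(\overline{\F_q})$ and using the framework developed in Section \ref{pglactp1} rather than working purely on $\overline{\F_q}$.
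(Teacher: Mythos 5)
Your proposal follows essentially the same route as the paper's proof of Theorem \ref{falpha1}: identify the roots of $f(T)-\Phi(\alpha)g(T)$ as the $G$-orbit of $\alpha$, check that Frobenius commutes with the $G$-action, and apply Lemma \ref{comm_action2} to equalize the sizes of the Frobenius orbits. One caveat: the paper's precise version of Theorem \ref{falpha1} adds the hypothesis that $G$ acts \emph{regularly} on the roots, which is also a hypothesis of Lemma \ref{comm_action2}, so in your ramified case that lemma does not literally apply as stated. The fix is easy --- you only need the regularity-free first step of the lemma's proof, namely that a transitive $G$-action commuting with $\langle\sigma\rangle$ permutes the $\sigma$-orbits transitively and hence forces them all to have the same cardinality --- but as written your appeal to Lemma \ref{comm_action2} in the non-regular case is not covered by its stated hypotheses and should be replaced by that direct observation.
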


There are $q+1$ points on the projective line over $\F_q$.
There are $q+1$ or $q+2$ conjugacy classes in $PGL(2,q)$, according as $q$ is even or odd.
Section 7 explains a natural and also unusual correspondence between conjugacy classes in $PGL(2,q)$,
and the elements of the projective line over $\F_q$, which arises as a result of our 
investigations.
Particular attention is paid to elements of order 2, which is in keeping with the importance of 
the centralizer of an involution in finite group theory.

 We remark that although the finite
 subgroups of $PGL(2,F)$ have been classified,
 our results are abstract and do not use the classification.
 In Section \ref{s5}  we present an application of our lemmas - a short proof that $S_5$ is not a subgroup of 
 $PGL(2,F)$ when $F$ does not have characteristic 5. 
 In Section \ref{lang}  we present an alternative proof of our main working result, Theorem \ref{main3b}.
 This proof uses Lang's theorem on algebraic groups, and yields some additional information
 as well as providing another context.

 \section{Background}
 
 In this section we present some background for the later sections.
  In this paper $\overline{F}$ will denote an algebraic closure of a field $F$.
  We use the notation $\F_q$ for a finite field with $q$ elements,
  where $q$ will always be a power of a prime number $p$.
 
 \subsection{Projective Space and $PGL(2,F)$}
 
 The general linear group $GL(2,F)$ is the group of all invertible $2$-by-$2$
 matrices with entries in $F$. The projective general linear group $PGL(2,F)$
 is the quotient of $GL(2,F)$ by the subgroup consisting of all the nonzero scalar multiples of the identity
 (its centre).

Let $\mathbb{P}^1(F)$ denote the projective line over a field $F$.
 The points of $\mathbb{P}^1(\F_q)$ are the $(q^2-1)/(q-1)=q+1$
one-dimensional subspaces of $\F_q^2$.  
The 1-subspace containing $(x,y)$ is defined by the value
$x/y$, with the convention that $x/y=\infty $ if
$y=0$, $x\not= 0$, and this provides a bijective correspondence
between 1-subspaces (points of $\mathbb{P}^1(\F_q)$) and elements of
$\F_q\cup \{\infty \}$.  We may think of the elements of
$\F_q\cup \{\infty \}$ as the slopes of the 1-subspaces.

In this way we usually identify $\mathbb{P}^1(\F_q)$ with
$\F_q\cup \{\infty \}$.
This identification induces the following action of $PGL(2,q)$.
For an element ${a\ b\choose c\ d}$ of $GL(2,q)$ the
corresponding element of $PGL(2,q)$ is the map
$z\mapsto {az+b\over cz+d}$, with the conventions that
${a\infty +b\over c\infty +d}={a\over c}$ if $c\not= 0$,
${a\infty +b\over c\infty +d}=\infty$ if $a\not= 0$, $c=0$,
and $a/0=\infty$ if $a\not= 0$.

We then have
$$PGL(2,q)=\bigg\{z\mapsto {az+b\over cz+d}: \ ad-bc\not= 0 \bigg\} $$
 as a group of linear fractional (or M\"obius) transformations on $\mathbb{P}^1(\F_q)$.
 The order of $PGL(2,q)$ is $q(q-1)(q+1)$.

 We could also let $PGL(2,q)$ act on the projective line over a finite extension field,
  $\mathbb{P}^1(\F_{q^r}) $, or over the algebraic closure $\mathbb{P}^1(\overline{\F_q}) $,
  by the same formula $z\mapsto {az+b\over cz+d}$.
 This will be relevant in later sections.

 \subsection{Function Fields}
 
 Let $F$ be a field,  let $x$ be an indeterminate over $F$, and let $F(x)$ be the 
 field of rational functions in $x$.

  The degree of a rational function is defined to 
 be the maximum of the degrees of the numerator and denominator.
We always assume the numerator and denominator are relatively prime, as polynomials.

 It is well known (see \cite{vdw} for example)
 that if $r(x)$ is a nonconstant rational function in $F(x)$, then
 the degree of the field extension $[F(x):F(r(x))]$ is equal to the degree of $r(x)$.
 It follows that $F(x)=F(r(x))$ if and only if $r(x)$ has degree 1.
 
 It then follows that the full group of $F$-automorphisms of $F(x)$  is  $PGL(2,F)$,
 which acts by mapping $r(x)\in F(x)$ to $r(\frac{ax+b}{cx+d})$, where 
 $a,b,c,d \in F$ and the matrix 
 ${a\ b\choose c\ d}$ is invertible.
 Therefore, if $G$ is a finite group of $F$-automorphisms of $F(x)$,
 we may assume that $G$ is a subgroup of $PGL(2,F)$.
 
 All automorphisms in this context are assumed to be $F$-automorphisms.
 Let $G$ be a finite group of automorphisms of $F(x)$, i.e., a finite subgroup of $PGL(2,F)$.
 Let $F(x)^G$ denote the subfield of rational functions that are fixed by every element of $G$, i.e.,
 \[
 F(x)^G=\{ r\in F(x) : r \circ g = r \text{ for all } g\in G\}.
 \]
 By L\"uroth's Theorem there exists a rational function $\Phi(x)$ such that
 $F(x)^G = F(\Phi(x))$.
 This $\Phi(x)$ may be called a generator for the invariant functions of $G$.
 By the remarks so far, $\frac{a\Phi +b}{c\Phi +d}$ is also a generator, 
 where ${a\ b\choose c\ d}\in GL(2,F)$, and all
 other generators have this form.
 
 By Galois theory \cite{A} we know that $F(x):F(x)^G$ is a Galois extension with
Galois group $G$, and  $[F(x):F(x)^G]=|G|$.
 It follows that the degree of a generator for the $G$-invariant rational functions is $|G|$,
 and that a nonconstant $G$-invariant rational function cannot have degree less than $|G|$.
 
 As far as we are aware, 
 an explicit   generator for $\F_q(x)^G$ when $G=PGL(2,q)$ has been calculated first in 
 Rivoire \cite{R} who also deals with most subgroups.
One can also find this calculated in the appendix by Serre  to Abhyankar \cite{Ab},
in Chu-Kang-Tan \cite{CKT},
in Gutierrez-Sevilla  \cite{GS} and in Bluher \cite{B}.
 One such generator for $PGL(2,q)$ is
 \begin{equation}\label{pglgen}
\Phi(x)= \frac{(1+(x^q-x)^{q-1})^{q+1}}{(x^q-x)^{q^2-q}}.
 \end{equation}
 An explicit  generator for $\F_q(x)^G$ when $G=PSL(2,q)$ was first found by Dickson \cite{D}.

 \subsection{Orbits and Group Actions}
 
 Let $G$ be a finite group acting on a set $\Omega$, which may be finite or infinite.
 For $g\in G$ and $x\in \Omega$ we denote the image of $x$ under $g$ by $g(x)$.
 An orbit for the action is a set of the type
 ${\cal O}_x=\{ g(x) : g\in G\}$.
 An orbit is called regular if $|{\cal O}_x|=|G|$, and is called non-regular (or short)  if $|{\cal O}_x|<|G|$.
 
 The stabilizer of $x\in \Omega$ is $G_x=\{ g \in G : g(x)=x\}$ and 
 the Orbit-Stabilizer theorem states that $[G:G_x]=|{\cal O}_x|$.
 It follows that $|G_x|=1$ iff  ${\cal O}_x$ is a regular orbit.
  When there is only one orbit we say that $G$ acts transitively on $\Omega$.
 If $G$ acts transitively, and if $|G_x|=1$ for all $x\in\Omega$, we say that $G$ acts regularly on $\Omega$
 (every orbit is a regular orbit).

 We say that $z\in \Omega$ is a fixed point of $g\in G$ if $g(z)=z$.
 If $z$ is a fixed point of $g$, 
 then $h(z)$ is a fixed point of $hgh^{-1}$.
 This fact is often used to move a  fixed point by conjugation.

\subsection{Linear Algebra}\label{la}

In this subsection we will assume $F=\F_q$ and we spell out some details
from linear algebra that we will use later, see for example the proofs of Lemma \ref{two_fixed},
Lemma \ref{Sylow}, and Theorem \ref{numberofr}.

 Recall that any matrix $A\in GL(2,q)$ has an image $s$ in $PGL(2,q)$,
 and all  nonzero scalar multiples of $A$ will have
the same image in $PGL(2,q)$.
Conversely, an element of  $PGL(2,q)$ has $q-1$ pre-images in $GL(2,q)$,
which are invertible matrices that are all scalar multiples of each other.
We summarize the different behaviours of eigenvalues of $A$, and
how this relates to the order of $s$ and to the fixed points of $s$.

By elementary linear algebra, exactly one of the following holds.
This is essentially the Jordan canonical form for  $GL(2,q)$.
\begin{enumerate}
\item $A$ has two distinct eigenvalues $\alpha_1, \alpha_2$  in $\F_q$. 
In this case $A$ is similar to  ${\alpha_1 \ 0\choose 0\ \alpha_2}$
over $\F_q$, and the order of $A$ is a divisor  of $q-1$.
Therefore $s$ is conjugate to
the image of ${\alpha_1 \ 0\choose 0\ \alpha_2}$ in $PGL(2,q)$,
which is $x\mapsto \alpha x$ where 
$\alpha=\alpha_1 \alpha_2^{-1}\not=1$.
This has order dividing $q-1$ and obviously has fixed points $0$ and $\infty$
(and see  Lemma \ref{two_fixed}).
\item $A$ has two equal eigenvalues $\alpha, \alpha$ in $\F_q$.
In this case $A$ is similar to  ${\alpha \ 1\choose 0\ \alpha}$ and the order
of $A$ is $pd$ where $d$ divides $q-1$, because 
${\alpha \ 1\choose 0\ \alpha}^n={\alpha^n \ n\alpha\choose 0\ \ \alpha^n}$.
Note that $\alpha =1$ if and only if $d=1$. 
Therefore $s$ is conjugate to the image of
${\alpha \ 1\choose 0\ \alpha}$ in $PGL(2,q)$, which is $x\mapsto  x+\alpha^{-1}$.
This has order  $p$  and has one fixed point, $\infty$
(and see  Lemma \ref{two_fixed}).
\item $A$ has two eigenvalues in $\F_{q^2}\setminus \F_q$, which are necessarily distinct.
In this case the characteristic polynomial of $A$ is irreducible over $\F_q$.
The order of $A$ is a divisor of $q^2-1$ that is not a divisor of $q-1$, 
and the order of $s$ is a divisor of $q+1$.
For information about the fixed points, see Lemma \ref{two_fixed} and Lemma \ref{nofixed}.
\end{enumerate}

When $q$ is odd, elements of order 2 in $PGL(2,q)$ come in two types, some in 
the first category and some in the third category.
In Section 7 we will expand on this distinction.

 \section{The action of  $PGL(2,F)$
 on the projective line $\mathbb{P}^1(F)$}\label{pglactp1}
 
 In this section we will prove some general results about $PGL(2,F)$
 acting on the projective line over $F$.
 If $F$ is not algebraically closed, then $PGL(2,F)$ also acts on the 
 projective line over $\overline{F}$, and obviously $PGL(2,F)$ is a subgroup 
 of $PGL(2,\overline{F})$.
 Therefore we will sometimes assume in this section that $F$ is an algebraically closed field
 of characteristic $p>0$.
 We do not claim that any results in Section \ref{pglactp1} are new.
 
 \subsection{Fixed Points}
 
 For a linear fractional transformation $s\in PGL(2,F)$,
 recall that $z\in \overline{F}$ is a fixed point of $s$ if $s(z)=z$.
 We say that a subgroup $G$ of $PGL(2,F)$ fixes $z$ if $z$ is a fixed point for  every element of $G$.
 Our first lemma can also be found as Theorem 11.14 in \cite{HKT}.

 \begin{lemma}\label{two_fixed}
 Let $F$ be an algebraically closed field of characteristic $p>0$.
 Any element of $PGL(2,F)$ has at most two fixed points in its action on 
 the projective line $\mathbb{P}^1(F)$. 
 All elements of $PGL(2,F)$ have exactly two fixed points except the elements of order $p$
 which have exactly one fixed point.
 \end{lemma}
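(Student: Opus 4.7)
The plan is to solve the fixed point equation $s(z)=z$ directly for $s(z)=(az+b)/(cz+d)$, then conjugate to a normal form to read off the order. First I would split according to whether $c=0$ or not.

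If $c\neq 0$, then $\infty$ is not fixed (since $s(\infty)=a/c\in F$), so fixed points come from $cz^2+(d-a)z-b=0$. Over the algebraically closed field $F$ this quadratic has exactly two roots with multiplicity, hence either two distinct fixed points (when the discriminant $(d-a)^2+4bc\neq 0$) or one double fixed point (when the discriminant is zero). If $c=0$, then $\infty$ is a fixed point, and the equation $(d-a)z=b$ gives one other fixed point when $d\neq a$, no further fixed point when $d=a$ and $b\neq 0$, and all of $F$ fixed only when $b=0$ and $d=a$ (which forces $s$ to be the identity in $PGL(2,F)$, excluded). In every non-identity case $s$ has either one or two fixed points.

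Next I would upgrade the count to an assertion about the order. For the two-fixed-point case, conjugate by an element of $PGL(2,F)$ sending the fixed points to $0$ and $\infty$ (such a conjugation exists because $PGL(2,F)$ acts $3$-transitively on $\mathbb{P}^1(F)$, and in particular $2$-transitively). The conjugate then has the form $z\mapsto\alpha z$ for some $\alpha\in F^\times$ with $\alpha\neq 1$. Since $F$ has characteristic $p$ and the polynomial $X^p-1=(X-1)^p$ has $1$ as its only root, $F^\times$ contains no nontrivial element of order dividing $p$, so the order of $s$ is not $p$. For the one-fixed-point case, conjugate to send the unique fixed point to $\infty$; then the conjugate fixes $\infty$ alone, so it has the form $z\mapsto z+e$ with $e\neq 0$ (the scaling factor must equal $1$ or a second finite fixed point would appear). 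Such translations satisfy $s^n(z)=z+ne$, and $ne=0$ with $e\neq 0$ forces $p\mid n$, so the order of $s$ is exactly $p$.

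Combining the two directions, elements with exactly one fixed point are precisely the elements of order $p$, and all other non-identity elements have exactly two fixed points. The main obstacle I expect is the bookkeeping around the point at infinity and making the conjugation step rigorous; concretely, one must verify that $PGL(2,F)$ acts transitively on pairs (resp.\ single points) of $\mathbb{P}^1(F)$, which follows from the standard fact that $PGL(2,F)$ is sharply $3$-transitive on $\mathbb{P}^1(F)$. Once the two normal forms $z\mapsto\alpha z$ and $z\mapsto z+e$ are reached, the rest is the elementary remark about $F^\times$ in characteristic $p$.
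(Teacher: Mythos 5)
Your proposal is correct and follows essentially the same route as the paper: both read off the fixed points from the quadratic $cz^2+(d-a)z-b$, split on whether $c=0$ or the discriminant vanishes, and reduce by conjugation to the normal forms $z\mapsto\alpha z$ and $z\mapsto z+e$ to identify the one-fixed-point elements as exactly those of order $p$. The only cosmetic difference is that the paper performs the conjugation at the matrix level (passing to an upper-triangular representative), whereas you invoke $2$-transitivity of $PGL(2,F)$ on $\mathbb{P}^1(F)$ to move the fixed points to $0$ and $\infty$; both are standard and equally rigorous.
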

 
 \begin{proof}
 Let $s(x)=\frac{ax+b}{cx+d}$ be an element of $PGL(2,F)$
 and suppose that $s(x)$ has a fixed point  $z\in \mathbb{P}^1(F)$.
 Then $az+b=z(cz+d)$ which means that fixed points are the roots of the quadratic 
 polynomial $cT^2+(d-a)T-b$.
 This quadratic will have two distinct roots unless either $c=0$ or $(d-a)^2+4bc=0$, if $p$ is odd.
 In characteristic 2 this changes to $c=0$ or $a=d$.
 
 If $c=0$ then $s(x)$ has the form $ax+b$, which fixes $\infty$. 
 If $a\not= 1$ then $b/(1-a)$ is also a fixed point, and $s(x)$ has two distinct fixed points.
 If $a=1$ then the only fixed point is $\infty$.
 In this case $s(x)=x+b$ has order $p$.
 
 If $(d-a)^2+4bc=0$ in odd characteristic, or $a=d$ in even characteristic,
 then the characteristic polynomial of the 
 matrix $A={a\ b\choose c\ d}$ in $GL(2,F)$ has one root of algebraic multiplicity two.
 The same is true for any conjugate of $A$, since conjugates have the 
 same characteristic polynomial. We can always conjugate $A$ in $GL(2,F)$  to make
 the bottom left entry 0. It follows that a conjugate of $s$ has fixed point  $\infty$, 
 and no other fixed points. 
 As in the previous paragraph, this conjugate then must have order $p$, and so does $s$.
 \end{proof}
 
 {\bf Remark.} For $PGL(2,q)$, the fixed points are either defined over $\F_q$ or $\F_{q^2}$,
 depending on whether the quadratic polynomial in the proof has roots
 in $\F_q$ or $\F_{q^2}$.

 Next we shall study subgroups that do have a fixed point.

\begin{lemma} \label{cyclic_stabilizer}
Let $G$ be a finite subgroup of $PGL(2,F)$, where $F$ is any field.
Suppose that $G$ fixes a point in its action on the projective line $\mathbb{P}^1(F)$. 
\begin{enumerate}
\item If $|G|$ is not divisible by the characteristic of $F$, then $G$ is cyclic. 
\item If $|G|$ is divisible by $p=char(F)$, then 
$G$ has an elementary abelian normal $p$-subgroup with cyclic quotient of order coprime
to $p$.
\end{enumerate}
\end{lemma}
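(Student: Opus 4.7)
The plan is to reduce to the affine group by moving the fixed point to infinity, then analyze the structure via the natural homomorphism to $F^\ast$.

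First, since $G$ fixes some point $z \in \mathbb{P}^1(F)$, I choose $h \in PGL(2,F)$ with $h(z) = \infty$ (for example $h(x) = 1/(x-z)$ if $z \neq \infty$) and replace $G$ by $hGh^{-1}$. This reduces to the case where $G$ fixes $\infty$, so every element of $G$ has the form $x \mapsto ax + b$ with $a \in F^\ast$, $b \in F$. In other words, $G$ is contained in the affine group $\mathrm{Aff}(F)$. This group carries a homomorphism $\phi : \mathrm{Aff}(F) \to F^\ast$ sending $x \mapsto ax+b$ to $a$, whose kernel $T$ consists of the translations $x \mapsto x+b$ and is isomorphic to the additive group $(F,+)$.

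For part (1), suppose $p \nmid |G|$ (interpreting this to include characteristic $0$). If $\mathrm{char}(F) = p > 0$, any nontrivial translation $x \mapsto x+b$ has order exactly $p$, so the hypothesis forces $G \cap T = \{1\}$; in characteristic $0$, $T$ is torsion-free so the same conclusion holds trivially. Hence $\phi|_G$ is injective and embeds $G$ as a finite subgroup of $F^\ast$. Since every finite subgroup of the multiplicative group of a field is cyclic, $G$ is cyclic.

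For part (2), set $P := G \cap T$. As a finite subgroup of $(F,+)$ in characteristic $p$, every nonidentity element of $P$ has order $p$, so $P$ is elementary abelian, and $P \trianglelefteq G$ since $T \trianglelefteq \mathrm{Aff}(F)$. The induced injection $G/P \hookrightarrow F^\ast$ makes $G/P$ a finite subgroup of $F^\ast$, hence cyclic. It remains to check that $|G/P|$ is coprime to $p$: this follows because in characteristic $p$ the equation $a^p = 1$ in $F$ forces $(a-1)^p = 0$ and thus $a = 1$, so $F^\ast$ has no elements of order $p$.

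The only real point requiring care is verifying that $G/P$ has order prime to $p$ (equivalently, that $P$ is a Sylow $p$-subgroup of $G$); this is exactly what the absence of $p$-torsion in $F^\ast$ delivers, so no genuine obstacle arises. The conjugation step in part (1) tacitly uses that the target field for the embedding $G \hookrightarrow F^\ast$ is $F$ itself, not some extension, which is fine since the conjugation was carried out inside $PGL(2,F)$.
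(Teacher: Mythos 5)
Your proposal is correct and follows essentially the same route as the paper: conjugate so that $G$ fixes $\infty$, use the homomorphism to $F^*$ given by the linear coefficient, identify the kernel with a subgroup of the translations, and invoke the cyclicity of finite subgroups of $F^*$. The only difference is that you spell out two points the paper leaves implicit, namely why the kernel is trivial in part (1) when $\mathrm{char}(F)=p>0$ but $p\nmid |G|$, and why $F^*$ has no $p$-torsion in characteristic $p$; both of your justifications are valid.
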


\begin{proof}
$PGL(2,F)$ acts transitively on $\mathbb{P}^1(F)$, and hence, replacing $G$ by a conjugate under
$PGL(2,F)$ if necessary, it suffices to assume that $G$ fixes $\infty$. The subgroup of
$PGL(2,F)$ that fixes $\infty$ consists of all transformations $s$ of the form
\[
s(x)=ax+b,
\]
where $x\in \mathbb{P}^1(F)$, $a$ runs over the multiplicative group $F^*$ of non-zero elements of $F$ and $b$ runs over all elements of $F$. 

Now if we have a second transformation $t$ of the form $t(x)=a'x+b'$ that fixes $\infty$, we calculate that
\[
ts(x)=aa'(x)+a'b+b'.
\]
It follows that the map $\sigma:G\to F^*$ given by $\sigma(s)=a$ is a homomorphism. 

Consider the kernel of $\sigma$. This consists of all transformations $s$ of the form $s(x)=x+b$, for some $b$ in $F$. Now if $b\neq 0$ and $F$ has characteristic 0, any such transformation has infinite order and hence cannot belong to the finite group $G$. Hence $\sigma$ is injective in this case.

If $b\neq 0$ and the characteristic of $F$ is a prime $p$, $s$ has order $p$. Furthermore, it is easy to see that any two transformations having the form $x\mapsto x+c$ for some $c$ in $F$ commute. Thus the kernel is an elementary abelian $p$-subgroup of $G$. 

We also observe that $\sigma(G)$ is a finite subgroup of the multiplicative group of a field and is hence cyclic of order prime to $p$. Thus $G$ has a normal elementary abelian $p$-subgroup with cyclic quotient of order prime to $p$, as required.
\end{proof}

\subsection{Non-regular Orbits}\label{nro}

Next shall classify the non-regular orbits in the action of a finite subgroup of $PGL(2,F)$ on the projective line.

\begin{lemma} \label{three_non_regular_orbits}
Let $G$ be a finite subgroup of $PGL(2,F)$, where $F$ is an algebraically closed field. Then in its action on the projective line $\mathbb{P}^1(F)$, $G$ has at most three orbits that
are not regular. Moreover, if $G$ has exactly three non-regular orbits and if $|G|$ is not divisible by the characteristic of $F$, either $G$ has a normal cyclic subgroup of index $2$ or $|G|$ is one of $12$, $24$ or $60$. In the former case, $G$ is not abelian unless it is an elementary abelian group of order $4$.
\end{lemma}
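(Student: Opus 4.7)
My plan is to apply the classical Riemann--Hurwitz-style fixed-point count. I label the non-regular orbits $\mathcal{O}_1,\ldots,\mathcal{O}_k$ and let $e_i\geq 2$ denote the common order of a point stabilizer on $\mathcal{O}_i$, so $|\mathcal{O}_i|=|G|/e_i$. The key observation is that any point fixed by a nontrivial $g\in G$ has nontrivial stabilizer and therefore lies in some $\mathcal{O}_i$; thus a double count of pairs $(g,x)$ with $g\neq 1$ and $g(x)=x$ captures everything needed.

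Summing over $x$, the orbit $\mathcal{O}_i$ contributes $|G|(1-1/e_i)$. Summing over $g$ and invoking Lemma \ref{two_fixed}, every $g\neq 1$ fixes two points except elements of order $p=\mathrm{char}(F)$, which fix one; writing $m$ for the number of order-$p$ elements, the total is $2(|G|-1)-m$. Dividing by $|G|$,
\[
\sum_{i=1}^{k}\!\left(1-\tfrac{1}{e_i}\right) \;=\; 2-\tfrac{2}{|G|}-\tfrac{m}{|G|} \;<\; 2.
\]
Each summand is at least $1/2$, so $k\leq 3$, proving the first assertion.

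Now assume $k=3$ and $p\nmid|G|$, so $m=0$. The identity becomes $1/e_1+1/e_2+1/e_3=1+2/|G|$ with $2\leq e_1\leq e_2\leq e_3$. A short case analysis (ruling out $e_1\geq 3$, for then the left side is $\leq 1$, and splitting on $e_2=2$ versus $e_2=3$ versus $e_2\geq 4$) yields either $(e_1,e_2,e_3)=(2,2,|G|/2)$, or one of the Platonic triples $(2,3,3),(2,3,4),(2,3,5)$ with $|G|\in\{12,24,60\}$. In the first case, the orbit of size $|G|/e_3=2$ is stabilized pointwise by a subgroup $H$ of index $2$, which is automatically normal; since $H$ fixes a point and $|H|$ is coprime to $p$, Lemma \ref{cyclic_stabilizer} gives that $H$ is cyclic.

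Finally, for the last assertion, suppose in this subcase that $G$ is abelian. Because $G_{g(x)}=gG_xg^{-1}=G_x$ when $G$ is abelian, the stabilizer is constant on each orbit, so choosing $x$ in one of the orbits of size $|G|/2$, its stabilizer $\langle\tau\rangle$ (with $\tau$ an involution) fixes all $|G|/2$ points of that orbit. Lemma \ref{two_fixed} gives $|\mathrm{Fix}(\tau)|\leq 2$, so $|G|\leq 4$; combined with $|G|\geq 4$ (since $e_3=|G|/2\geq 2$), we obtain $|G|=4$. If $G$ were cyclic $\langle s\rangle$ of order $4$, then $\mathrm{Fix}(s)\subseteq\mathrm{Fix}(s^2)$ with both sides of size $2$ by Lemma \ref{two_fixed}, forcing equality; the two fixed points then give only two (singleton) non-regular orbits, contradicting $k=3$. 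Hence $G\cong(\Z/2\Z)^2$. The hardest part will be this final abelian analysis: one must combine constancy of stabilizers on orbits with Lemma \ref{two_fixed} to bound $|G|$, and then use the fixed-point inclusion $\mathrm{Fix}(s)\subseteq\mathrm{Fix}(s^2)$ to exclude cyclic $\Z/4\Z$.
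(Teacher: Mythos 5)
Your proof is correct and takes essentially the same approach as the paper's: your incidence double-count is the same Burnside-type fixed-point computation (the paper runs it first as an inequality over four hypothetical non-regular orbits and then as an exact count when $k=3$, whereas you fold both into the single identity $\sum_i(1-1/e_i)=2-2/|G|-m/|G|$), and the ensuing case analysis on $(e_1,e_2,e_3)$, the index-$2$ normal cyclic subgroup via Lemma \ref{cyclic_stabilizer}, and the abelian analysis all match. The only cosmetic difference is in excluding $\mathbb{Z}/4\mathbb{Z}$: you use $\mathrm{Fix}(s)=\mathrm{Fix}(s^2)$ to show only two non-regular orbits arise, while the paper notes the unique involution would otherwise have to fix three points; both hinge on Lemma \ref{two_fixed} in the same way.
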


\begin{proof}
Suppose if possible that $G$ has at least four regular orbits and let $\Omega_i$, $1\leq i\leq 4$, be four such orbits. Let $\Omega$ be the union of these $\Omega_i$. Then $\Omega$ is a finite subset of $\mathbb{P}^1(F)$ on which $G$ acts with exactly four orbits. Let $G_i$ be the stabilizer of a given point in $\Omega_i$, $1\leq i\leq 4$. Note that we have $|G_i|>1$ for all $i$, since the $\Omega_i$ are not regular. 

Let $g$ be any element of $G$ and let $\chi(g)$ denote the number of fixed points of $g$ in its action on $\Omega$. We have $0\leq \chi(g)\leq 2$ if $g$ is not the identity, since such a $g$ fixes at most two points of $\mathbb{P}^1(F)$. By the well known orbit counting lemma, sometimes said to be due to Burnside, we have
\[
4|G|=\sum_{g\in G} \chi(g)\leq |\Omega|+2(|G|-1).
\]
Thus since 
\[
|\Omega|=|\Omega_1|+\cdots+|\Omega_4|=\frac{|G|}{|G_1|}+\cdots+\frac{|G|}{|G_4|},
\]
we obtain the inequality 
\[
2\leq \frac{1}{|G_1|}+\cdots+\frac{1}{|G_4|}-\frac{2}{|G|}.
\]
This is clearly impossible, since each fraction $1/|G_i|$ is at most 1/2.
We deduce that $G$ has at most three non-regular orbits on $\mathbb{P}^1(F)$. 

We now consider the case that $G$ has exactly three non-regular orbits, $\Omega_i$, $1\leq i\leq 3$, with one-point stabilizers $G_i$, where we choose notation so that $|G_1|\leq |G_2| \leq |G_3|$.
As before, we let $\chi(g)$ denote the number of points of $\Omega$ (the union of the $\Omega_i$) fixed by $g\in G$. We claim that under the hypothesis that $|G|$ is not divisible by the characteristic of $F$, $\chi(g)=2$ if $g\neq 1$. 

For, as we have already proved, $g\neq 1$ fixes exactly two points of $\mathbb{P}^1(F)$, unless it is unipotent, in which case it fixes exactly one. However, $g$ cannot be unipotent, as it implies $g$ has infinite order or order $p$, where $p$ is the characteristic of $F$, inadmissible by hypothesis on $G$ (compare with the proof of Lemma \ref{cyclic_stabilizer}).
Thus any non-identity element of $G$ has exactly two fixed points on $\mathbb{P}^1(F)$. 

Suppose if possible that $\chi(g)\neq 2$. Then there is a point $\omega$, say, of $\mathbb{P}^1(F)$ fixed by $g$ that is not in $\Omega$. Consider the $G$-orbit containing $\omega$. It is not regular, as $g$ is in the stabilizer subgroup of $\omega$, and hence is contained in $\Omega$. This is a contradiction, and we see that $\chi(g)=2$ for all non-identity $g$.

The orbit counting lemma now yields that
\[
3|G|=|\Omega|+2(|G|-1)
\]
and hence
\[
1=\frac{1}{|G_1|}+\frac{1}{|G_2| }+\frac{1}{|G_3|}-\frac{2}{|G|}.
\]
Given the numbering that $|G_1|\leq |G_2|  \leq |G_3|$, we cannot have $|G_1|\geq 3$ and thus $|G_1|=2$. This leads to
\[
\frac{1}{2}=\frac{1}{|G_2|}+\frac{1}{|G_3|}-\frac{2}{|G|}.
\]

It is clear from this inequality that $|G_2|=2$ or 3. If $|G_2|=2$, we obtain $2|G_3|=|G|$.
Thus $G_3$ has index 2 in $G$ and is hence normal. Furthermore, as $G_3$ fixes a point in
$\Omega_3$, it is cyclic by Lemma \ref{cyclic_stabilizer}. Thus $G$ has a normal cyclic subgroup of index 2 in this case.

Consider next the other possibility that $|G_2|=3$. Then we obtain
\[
\frac{1}{6}=\frac{1}{|G_3|}-\frac{2}{|G|}.
\]
From this equality, it is clear that $3\leq |G_3|\leq 5$ and the three feasible values for
$|G_3|$ give the three outcomes $|G|=12, 24$ or 60.

Finally, suppose that $G$ is an abelian group that has exactly three non-regular orbits. 
Note that $G$ cannot have order 2 in this case, since it would imply that a generator of $G$ fixed three points.
We have proved that in all cases there is an orbit $\Omega_1$ whose one point stabilizer $G_1$ has order 2. Let the element $t$ of order two generate $G_1$ and let $t$ fix the point $\omega$ in $\Omega_1$. Since $G$ is assumed to be abelian, for any $g$ in $G$, $g\omega$ is also fixed by $t$ and this point is in $\Omega_1$. Thus since $t$ fixes at most two points of $\Omega_1$, it follows that there is a $G$-orbit of size at most two, consisting of fixed points of $t$, contained in the $G$-orbit $\Omega_1$. Thus, since $G>2$, it follows that
$|\Omega_1|=2$ and $|G|=4$. To complete the proof, we show that $G$ cannot be cyclic of order 4. For suppose that this is the case. Then, since there must be three $G$-orbits where the point-stabilizer has order two, and $G$ contains a unique subgroup of order two, the unique element of order two in $G$ fixes three points, which we know cannot happen. 
\end{proof}

{\bf Remark.}
The exceptional case where $G$ is an elementary abelian group of order 4 can certainly occur, as the following example indicates. Suppose that $F$ contains a primitive 4-th root of unity, $j$, say. Then the three commuting fractional transformations of order 2 defined by
\[
s(x)=-x,\quad t(x)=\frac{1}{x},\quad st(x)=-\frac{1}{x}
\]
act on $\mathbb{P}^1(F)$ and generate an elementary abelian group $G$ of order 4. $G$ has exactly three non-regular orbits, representative elements for these orbits being $\infty$, $-1$ and $j$.

{\bf Remark.} The group theoretic information contained in our proof enables us to identify the 
groups concerned, using the group itself acting on cosets of subgroups.
In the cases $|G|=12, 24$ or 60, it can be shown that $G$ is isomorphic to $A_4$, $S_4$ or $A_5$
respectively. We omit the details.

{\bf Remark.} The previous lemma can also be proved using the Riemann-Hurwitz formula,
in a similar manner to Theorem 11.56 in \cite{HKT}.

Next we focus on the case that $|G|$ is divisible by the characteristic.
In this case the Sylow subgroups are important.

 \begin{lemma} \label{Sylow}

Let $G$ be a finite subgroup of $PGL(2,\overline{\mathbb{F}}_q)$, where $p$ divides $|G|$. Let $Q$ be a Sylow $p$-subgroup of $G$
and let $N_G(Q)$ denote its normalizer in $G$. 
Then the following are true:

\begin{enumerate}
\item  $Q$ has a unique fixed point on $\mathbb{P}^1(\overline{\mathbb{F}}_q)$.

\item $Q$ is elementary abelian.

\item The centralizer in $G$ of any non-identity element of $Q$ is $Q$.

\item Any two different Sylow $p$-subgroups of $G$ have trivial intersection. 

\item Let $M$ denote the number of elements of order $p$ in $G$. Then 
\[
M=|G:N_G(Q)|(|Q|-1)
\]
and this number is congruent to $-1$ modulo $p$. 

\item $N_G(Q)$ fixes the unique fixed point of $Q$ on $\mathbb{P}^1(\overline{\mathbb{F}}_q)$ and it is the stabilizer in $G$ of this point.
\end{enumerate}

\end{lemma}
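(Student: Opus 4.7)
The plan is to establish the six statements in sequence, exploiting Lemma \ref{two_fixed} (every order-$p$ element in $PGL(2,\overline{\mathbb{F}}_q)$ has exactly one fixed point) and Lemma \ref{cyclic_stabilizer} (the structure of a point stabilizer) throughout, supplemented by a small normal-form calculation.

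For (1), I would take a nontrivial element $g$ in the center $Z(Q)$ of the $p$-group $Q$; it has order a power of $p$, and replacing $g$ by a suitable power we may assume it has order exactly $p$. By Lemma \ref{two_fixed} this $g$ has a unique fixed point $z_0 \in \mathbb{P}^1(\overline{\mathbb{F}}_q)$. For any $h \in Q$, centrality gives $hgh^{-1}=g$, so $h$ permutes the fixed points of $g$; since there is only one, $h(z_0) = z_0$. Hence $Q$ fixes $z_0$, and any point fixed by all of $Q$ is in particular fixed by $g$, forcing uniqueness. For (2), since $Q$ fixes a point, Lemma \ref{cyclic_stabilizer}(2) applies; the cyclic quotient of order coprime to $p$ must be trivial because $Q$ is itself a $p$-group, so $Q$ is elementary abelian.

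The next three parts rest on a normal form. After conjugation in $PGL(2,\overline{\mathbb{F}}_q)$ we may assume $z_0 = \infty$, so $G_{z_0} = \{x \mapsto ax+b : a \in \overline{\mathbb{F}}_q^*, b \in \overline{\mathbb{F}}_q\}\cap G$ and by (2) its Sylow $p$-subgroup is the subgroup $Q$ of pure translations $x \mapsto x+b$. For (6), if $h \in N_G(Q)$ then $h$ permutes the fixed-point set of $Q$, which by (1) is $\{z_0\}$, so $h \in G_{z_0}$; conversely any $h \in G_{z_0}$ conjugates the unique Sylow $p$-subgroup of $G_{z_0}$ to itself, so $h \in N_G(Q)$. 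Thus $N_G(Q) = G_{z_0}$. For (3), given $g \in Q$ non-identity, any $h \in C_G(g)$ fixes the unique fixed point of $g$ and hence lies in $G_{z_0}$; a direct computation with $g : x\mapsto x+\beta$ ($\beta \neq 0$) and $h : x\mapsto ax+b$ shows $gh=hg$ forces $a=1$, so $C_G(g)$ equals the translation subgroup, namely $Q$.

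For (4), suppose $Q_1 \neq Q_2$ are Sylow $p$-subgroups with a common non-identity element $g$. Since $Q_1,Q_2$ are abelian by (2), both centralize $g$, but (3) applied to the Sylow $Q_1$ forces $C_G(g) = Q_1$, and likewise $C_G(g) = Q_2$, giving $Q_1 = Q_2$, a contradiction. Finally for (5), Sylow's theorem gives exactly $|G:N_G(Q)|$ Sylow $p$-subgroups; by (4) their nontrivial elements are disjoint and each Sylow contributes $|Q|-1$ such elements, so $M = |G:N_G(Q)|(|Q|-1)$. Since $|G:N_G(Q)| \equiv 1 \pmod p$ by Sylow and $|Q|-1 \equiv -1 \pmod p$, the congruence $M \equiv -1 \pmod p$ follows at once. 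The only genuinely delicate step is (1), where one needs the center of $Q$ to produce the common fixed point; everything else is then bookkeeping with Lemma \ref{cyclic_stabilizer} and the normal form at $\infty$.
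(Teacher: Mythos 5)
Your proof is correct, and for parts (2)--(6) it follows essentially the same route as the paper: Lemma \ref{cyclic_stabilizer} for elementary abelianity, the normal form $x\mapsto ax+b$ at the fixed point for the centralizer computation, part (3) plus commutativity for trivial intersection, Sylow counting for $M$, and the identification $N_G(Q)=G_{z_0}$ via uniqueness of the fixed point and normality of $Q$ in $G_{z_0}$. The genuine divergence is in part (1). The paper first embeds $G$ in $PGL(2,E)$ for a finite extension $E$ of $\mathbb{F}_q$ and counts: the $Q$-orbits on $\mathbb{P}^1(E)$ have $p$-power size while $|\mathbb{P}^1(E)|=q^s+1\equiv 1\pmod p$, so a singleton orbit must exist. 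You instead take a non-identity element $g$ of $Z(Q)$, reduce to order exactly $p$, invoke Lemma \ref{two_fixed} to get its unique fixed point $z_0$, and use centrality to show every $h\in Q$ preserves $\{z_0\}$. Both are valid; your argument avoids the descent to a finite subfield entirely (so it would work verbatim for a finite $p$-subgroup of $PGL(2,F)$ over any field of characteristic $p$), at the cost of leaning on the sharper clause of Lemma \ref{two_fixed} (exactly one fixed point for order-$p$ elements) for existence and not just uniqueness, whereas the paper's counting argument produces existence with no information about individual elements. Your reordering, proving (6) before (3)--(5), is harmless since (6) depends only on (1) and (2).
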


\begin{proof} For simplicity of notation, we let $F$ denote $\overline{\mathbb{F}}_q$.
The elements of $G$ consist of linear fractional transformations $s$ that act according to the formula $s(x)=(ax+b)/(cx+d)$, where $a$, $b$, $c$, $d$ are elements of $F$. These coefficients are algebraic over $\mathbb{F}_q$ and hence lie in some finite extension, $E$, say, of $\mathbb{F}_q$. Thus since $G$ is finite, enlarging $E$ if necessary, we may assume that $G$ is a subgroup of the finite group $PGL(2,E)$.

1. The Sylow subgroup $Q$ acts on the projective line $\mathbb{P}^1(E)$ over $E$, 
 which we may consider to be a subset of $\mathbb{P}^1(F)$. 
 The $Q$-orbits on $\mathbb{P}^1(E)$ have size a power of $p$.
 Note that  $|\mathbb{P}^1(E)|=q^s+1$ for some positive integer $s$.
Since $\mathbb{P}^1(E)$ is the disjoint union of $Q$-orbits, and these orbits have size a power of $p$,  
there must be at least one $Q$-orbit of size 1. This shows that there is a fixed point of $Q$ on 
$\mathbb{P}^1(E)$, and thus  on $\mathbb{P}^1(F)$. 
That the fixed point is unique follows from Lemma \ref{two_fixed}.

2.  It now follows from Lemma  \ref{cyclic_stabilizer}
that $Q$ is elementary abelian, since $Q$ fixes a point of $\mathbb{P}^1(F)$ and is a $p$-group.

3. Let $s$ be a non-identity element of $Q$. 
By part 2, $s$ has order $p$.
We claim that any element of $G$ that commutes with $s$ also has order $p$.
Since $s$ has order $p$, a pre-image $S$ of $s$ in $GL(2,E)$ has both eigenvalues equal to 1
(see Section \ref{la}).
Therefore $S$ is similar to the matrix  ${1\ 1\choose 0\ 1}$ over $E$.
This shows that a conjugate of $s$ is $x\mapsto x+1$.
Therefore, we may assume for the purposes of this proof that $s(x)=x+1$.
Note that $s$ fixes $\infty$, and this is its unique fixed point on $\mathbb{P}^1(F)$.

Let $t\in G$ commute with $s$. 
Then $t(\infty)=t s (\infty)=s t (\infty)$ so $s$ fixes $t(\infty)$.
 By uniqueness of the fixed point of $s$ we have $t(\infty)=\infty$.
 Thus we can write $t(x)=cx+d$, where $c\neq 0$ and $d$ are elements of $F$. We now calculate that
\[
st(x)=cx+d+1,\quad ts(x)=cx+c+d.
\]
Since $st=ts$  we have $c=1$ and so $t(x)=x+d$. This shows that $t$ has order $p$, as claimed.

We finally show that $t$ is in $Q$. For we have just shown that the centralizer of $s$ is a $p$-group, 
and it contains $Q$, as $Q$ is abelian. Thus $Q$ is the centralizer of $s$. 

4. We may assume that $Q$ is not the unique Sylow $p$-subgroup of $G$, for otherwise there is nothing to prove. Let $Q_1$ be a different Sylow $p$-subgroup and let
$g$ be an element of $Q\cap Q_1$. Then both $Q$ and $Q_1$ centralize $g$ and hence $g$ must be the identity, by what we proved in the previous paragraph.

5. To count the number of elements of order $p$, we recall that any element of order $p$ is in some Sylow $p$-subgroup, and this Sylow subgroup is unique, by our argument above. 
Furthermore, all Sylow $p$-subgroups are conjugate in $G$ and their number is $|G:N_G(Q)|$. 
Given this data, our statement about the number of elements of order $p$ is immediate. 

Sylow's theorem states that the number of Sylow $p$-subgroups is congruent to 1 modulo $p$, and our statement about the number of elements of order $p$ follows. (We remark that this congruence about $M$ is easily proved from first principles and does not require our special hypotheses.)

6. We have shown that $Q$ fixes $\infty$, and observed that $\infty$ is the unique fixed point. 
If $h$ is in $N_G(Q)$, it is clear that any element  of $Q$ fixes $h(\infty)$. 
Thus, by uniqueness, $N_G(Q)$ also fixes $\infty$. Let $H$ be the stabilizer in $G$ of $\infty$. 
By Lemma \ref{cyclic_stabilizer}, $H$ has a normal Sylow $p$-subgroup, which must be $Q$, since $Q$ is a Sylow $p$-subgroup of $G$ contained in $H$.
Thus $H$ is contained in $N_G(Q)$ and we conclude that $H=N_G(Q)$.

 \end{proof}

 We have shown that there are at most three non-regular orbits. 
 Next we shall show that there are at most two non-regular orbits when $p$ divides $|G|$.

\begin{lemma} \label{two_non_regular_orbits}
Let $G$ be a finite subgroup of $PGL(2,\overline{\mathbb{F}}_q)$,  where $p$ divides $|G|$. Then in its action on  $\mathbb{P}^1(\overline{\mathbb{F}}_q)$, $G$ has at most two orbits that
are not regular. $G$ has exactly one non-regular orbit if and only if $G$ is a $p$-group.
\end{lemma}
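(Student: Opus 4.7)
My plan is to follow the orbit-counting strategy of Lemma \ref{three_non_regular_orbits}, sharpened by the structural information on Sylow $p$-subgroups from Lemma \ref{Sylow}. Let $Q$ be a Sylow $p$-subgroup of $G$ and $\omega$ its unique fixed point on $\mathbb{P}^1(\overline{\mathbb{F}}_q)$. Set $\Omega^* := G \cdot \omega$; its point stabilizer is $N_G(Q) \supseteq Q$, so $\Omega^*$ is non-regular. The key observation driving both parts is that every element $g \in G$ of order $p$ has exactly one fixed point on $\mathbb{P}^1$, and this fixed point lies in $\Omega^*$: indeed $g$ is contained in a unique Sylow $p$-subgroup $Q'$, which by Lemma \ref{Sylow}(3) coincides with its centralizer in $G$, so the fixed point of $g$ is also fixed by $Q'$ and hence is a $G$-translate of $\omega$.

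For the first assertion, I would suppose for contradiction that $G$ has three non-regular orbits, which must then be $\Omega_1, \Omega_2, \Omega^*$ with point stabilizer sizes $a_1, a_2 \geq 2$ and $|\Omega^*| = k := [G:N_G(Q)]$. Applying the orbit-counting lemma to $\Omega := \Omega_1 \cup \Omega_2 \cup \Omega^*$, and using $\chi(g) = 1$ for the $M = k(|Q|-1)$ elements of order $p$ (Lemma \ref{Sylow}(5)) together with $\chi(g) \leq 2$ for all other non-identity $g$, yields
\[
3|G| \leq |\Omega| + M + 2(|G| - 1 - M).
\]
Expanding $|\Omega| = |G|/a_1 + |G|/a_2 + k$ and dividing by $|G|$, this rearranges to
\[
\frac{1}{a_1} + \frac{1}{a_2} \geq 1 + \frac{2}{|G|} + \frac{|Q|-2}{|N_G(Q)|},
\]
whose right-hand side strictly exceeds $1$, contradicting $a_1, a_2 \geq 2$.

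For the second assertion, the easy direction is this: if $G$ is a $p$-group then $G = Q$, and by the eigenvalue analysis of subsection \ref{la} every non-identity element has order exactly $p$, hence each fixes only $\omega$, so $\{\omega\}$ is the sole non-regular orbit. Conversely, assume $G$ has exactly one non-regular orbit $\Omega_0$; it must equal $\Omega^*$ by the above, and every fixed point of every non-identity element of $G$ lies in $\Omega_0$ (any such fixed point has non-trivial stabilizer, hence its orbit is non-regular). Orbit counting applied to $\Omega_0$ alone then gives $|G| = |\Omega_0| + M + 2(|G| - 1 - M)$, which simplifies to $k(|Q| - 2) = |G| - 2$. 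Combined with $k \leq |G|/|Q|$ (from $|N_G(Q)| \geq |Q|$), this forces $|G| \leq |Q|$, and since $|Q|$ divides $|G|$ we conclude $G = Q$.

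The main subtlety I expect is bookkeeping on fixed points: one must carefully distinguish fixed points in all of $\mathbb{P}^1(\overline{\mathbb{F}}_q)$ from those lying in the chosen orbit union, and verify at each step that these coincide for non-identity group elements, precisely because any point with non-trivial stabilizer must lie in some non-regular orbit.
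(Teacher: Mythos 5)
Your proof is correct, and it takes a genuinely different route from the paper's. The paper carries over the numerical constraints $|G_1|=2$, $2\le|G_2|\le 3$, $3\le|G_3|\le 5$ from the proof of Lemma \ref{three_non_regular_orbits}, identifies $N_G(Q)$ with one of the point stabilizers, and then eliminates the possibilities by a case analysis over $p\in\{2,3,5\}$ and the order of $Q$, deriving an incompatible pair of equations in each case. Your key extra input — that the unique fixed point of every order-$p$ element lies in the single orbit $\Omega^*=G\cdot\omega$ of the Sylow fixed point, via Lemma \ref{Sylow}(3),(4),(6) — lets you fold the entire Burnside count into the one inequality
\[
\frac{1}{a_1}+\frac{1}{a_2}\;\ge\;1+\frac{2}{|G|}+\frac{|Q|-2}{|N_G(Q)|}>1,
\]
which is immediately absurd since the left side is at most $1$; this eliminates all cases uniformly and is arguably cleaner and more illuminating than the paper's enumeration. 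For the second assertion both arguments rest on the same Burnside identity $M=|\Omega_0|+|G|-2$; the paper closes it with the trivial bound $M\le|G|-1$ (with equality iff $G$ is a $p$-group), while you substitute $M=k(|Q|-1)$ and $|\Omega_0|=k$ to get $k(|Q|-2)=|G|-2$ and then use $k\le|G|/|Q|$ to force $G=Q$ — a touch longer but equally valid — and you also supply the easy forward direction, which the paper leaves implicit. The only point worth tightening in your write-up is that "three non-regular orbits" should formally read "at least three", with $\Omega^*$ chosen as one of the three; since $\Omega^*$ is certainly non-regular this costs nothing.
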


\begin{proof}
We employ the notation previously used. Suppose by way of contradiction that $G$ has exactly three 
non-regular orbits, $\Omega_i$, $1\leq i\leq 3$, with one-point stabilizers $G_i$, where $|G_1|\leq |G_2|\leq |G_3|$.
Let $M$ be the number of elements of order $p$ in $G$.

We have proved that $|G_1|=2$ and $2\leq |G_2|\leq 3$. Furthermore, if $Q$ is a Sylow
$p$-subgroup of $G$, Lemma \ref{Sylow} shows that we may take
$N_G(Q)$ to be one of the $G_i$.

We consider the possibility that 
$|G_2|=2$ also. We claim that $N_G(Q)$ cannot be $G_1$ or $G_2$. For if this is the case, $p=2$, and $G_1$ and $G_2$ are both Sylow 2-subgroups of $G$, of order 2. But then $G_1$ and $G_2$ are conjugate in $G$, and hence an element of order 2 in $G$ fixes points in both $\Omega_1$ 
and $\Omega_2$, contrary to the fact that an element of order $p=2$ fixes only one point.

Thus we can assume that $G_3=N_G(Q)$. We have additionally shown that 
\[
(M+2)|G_3|=|G|.
\]
We now have 
\[
(M+2)=|G:N_G(Q)|, \quad M=|G:N_G(Q)|(|Q|-1)
\]
and these two equations are clearly incompatible. 

We are left with the possibilities that $|G_2|=3$ and $3\leq |G_3|\leq 5$. Corresponding to the three values for $|G_3|$, there are accompanying equalities
\begin{equation}\label{eq1}
6(M+2)=|G|, \quad 12(M+2)=|G|, \quad 30(M+2)=|G|.
\end{equation}
It follows that $p$ is one of 2, 3 or 5.

Suppose first that $p=5$. Then we can only have $|G_3|=5$ and $G_3=N_G(Q)$. Our formula for
$M$ yields that $M=4|G|/5$ and this is incompatible with $30(M+2)=|G|$. Next, consider the case that $p=3$. Since no $|G_i|$ is divisible by 9, it follows that $|Q|=3$. Moreover, if
$N_G(Q)$ is not equal to $Q$, its order is at least 6, and no $G_i$ has order greater than 5. We deduce that $N_G(Q)=Q$ here and $M=2|G|/3$. This is incompatible with all of the equalities \eqref{eq1},
one of which must hold true. This excludes the case that $p=3$.

The possibility that $p=2$ remains to be excluded. Now $M+2$ is odd, and thus a Sylow
2-subgroup $Q$ of $G$ has order at most 4. Suppose that $|Q|=4$. Then we must have $N_G(Q)=Q$ and since both 6 and 30 are not divisible by 4, $12(M+2)=|G|$ must hold. Furthermore,
$M=3|G|/4$. This clearly leads to a contradiction. Finally, suppose that $|Q|=2$. We again must have $N_G(Q)=Q$ and then $M=|G|/2$. This also leads immediately to a contradiction, 
and we deduce that there is no example of a group with exactly three non-regular orbits and order divisible by $p$.

We now consider the case that $G$ has exactly one non-regular orbit, $\Omega$, say. Our previous arguments imply that
\[
|G|=|\Omega|+M+2(|G|-M-1)
\]
and thus 
\[
M=|\Omega|+|G|-2.
\]
Now we clearly have the trivial inequality $M\leq |G|-1$, with equality only if $G$ is a $p$-group, and it follows that $G$ is indeed a $p$-group and $|\Omega|=1$ (so that $\Omega$ consists of a single $G$-fixed point).
\end{proof}

{\bf Remark.}
An interesting example to consider is that of $PGL(2,q)$, of order $q^3-q$. 
By Lemma  \ref{two_non_regular_orbits} there are exactly two non-regular orbits.
This group acts
triply transitively on $\mathbb{P}^1(\mathbb{F}_q)$, and a point stabilizer is
the normalizer of a Sylow $p$-subgroup. 
The group also acts transitively on 
$\mathbb{P}^1(\mathbb{F}_{q^2})\setminus \mathbb{P}^1(\mathbb{F}_q)$, with point stabilizer 
being a cyclic group of order $q+1$. 
This accounts for the two non-regular orbits of the group on $\mathbb{P}^1(F)$. 
This is well known, see Theorem 11.92 of \cite{HKT} for example.
We note especially that $\mathbb{P}^1(\mathbb{F}_{q^3})\setminus \mathbb{P}^1(\mathbb{F}_q)$ is a single regular orbit.
These facts will be used later.

 \subsection{More on Fixed Points}
 
 We return to some results about fixed points, using Lemma \ref{Sylow}.

 \begin{lemma}\label{powersfix}
 Let $F$ be a field of characteristic $p>0$.
 Suppose $s$ is an element of 
 $PGL(2,F)$ and suppose that $s^r$ is not the identity and  fixes a point $\omega$ of $\mathbb{P}^1(F)$.
 Then $s$ also fixes $\omega$. 
 \end{lemma}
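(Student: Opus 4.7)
The plan is to pass to the algebraic closure $\overline F$ and lift $s$ to a matrix $A \in GL(2, \overline F)$. Since fixed points of a linear fractional transformation on $\mathbb{P}^1(\overline F)$ correspond to one-dimensional $A$-invariant subspaces of $\overline F^{\,2}$ (i.e.\ to eigenspaces of $A$), I aim to show that every \emph{nontrivial} power of $s$ has exactly the same fixed points on $\mathbb{P}^1(\overline F)$ as $s$ itself; applied with $n=r$, and using $s^r \neq 1$, this immediately yields $s(\omega)=\omega$. The argument splits on the Jordan form of $A$.

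If $A$ is diagonalizable over $\overline F$ with distinct eigenvalues $\lambda_1,\lambda_2$ and eigenspaces $V_1,V_2$, then $V_1,V_2$ are the eigenspaces of every power of $A$. The power $A^r$ has eigenvalues $\lambda_1^r,\lambda_2^r$, and is scalar precisely when these are equal, which would force $s^r=1$, contrary to hypothesis. So $A^r$ is non-scalar with eigenspaces $V_1,V_2$, and the fixed points of $s^r$ on $\mathbb{P}^1(\overline F)$ coincide with those of $s$.

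If $A$ is not diagonalizable then, after conjugating in $GL(2,\overline F)$, we may take $A={\lambda \ 1\choose 0\ \lambda}$ with $\lambda\in\overline F^{\,*}$. The computation $A^n={\lambda^n \ n\lambda^{n-1}\choose 0\ \ \lambda^n}$ (already used in Section \ref{la}) shows that $A^n$ is scalar iff $n\equiv 0\pmod p$, which is equivalent to $s^n=1$; otherwise $A^n$ retains the unique eigenspace $\langle e_1\rangle$ of $A$. Since $s^r\neq 1$, the power $s^r$ has the same single fixed point as $s$, which must be $\omega$.

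The only mildly delicate step is the positive-characteristic bookkeeping in the non-diagonalizable case: the off-diagonal entry $n\lambda^{n-1}$ vanishes exactly when $p\mid n$, and this is what ties the loss of fixed-point information precisely to $s^n$ becoming trivial in $PGL(2,\overline F)$. Note that no Sylow machinery from Lemma \ref{Sylow} is actually required for the argument, although the conclusion dovetails with the fact observed there that a nontrivial element of order $p$ has a unique fixed point shared with any nontrivial power.
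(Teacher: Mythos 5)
Your proof is correct, but it takes a genuinely different route from the paper. The paper argues at the level of the group: it splits on whether $p$ divides the order of $s$, uses Lemma \ref{two_fixed} to count fixed points (two each for $s$ and $s^r$ when the order is prime to $p$, so the sets coincide since one contains the other), and in the $p$-divisible case invokes the commuting argument from the proof of Lemma \ref{Sylow} to force the order of $s$ to be exactly $p$, after which B\'ezout ($ap+br=1$) shows $s$ and $s^r$ are powers of each other. You instead work directly with a matrix lift and its Jordan form over $\overline F$, identifying fixed points with eigenlines and checking in each of the two Jordan types that $A^n$ is scalar precisely when $s^n=1$ and otherwise has the same eigenspaces as $A$; this is self-contained linear algebra in the spirit of the paper's Section \ref{la}, avoids any dependence on Lemmas \ref{two_fixed} and \ref{Sylow}, and delivers the slightly sharper statement that \emph{every} nontrivial power of $s$ has exactly the same fixed-point set as $s$. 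The paper's version buys economy by reusing lemmas already proved for other purposes and keeps the argument purely group-theoretic. One cosmetic point: your case split (diagonalizable with distinct eigenvalues versus non-diagonalizable) silently omits the case of a diagonalizable $A$ with a repeated eigenvalue, i.e.\ $A$ scalar; this is harmless since then $s=1$ and $s^r=1$, contradicting the hypothesis, but it deserves a clause.
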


 \begin{proof}
 First suppose $F$ is algebraically closed, and assume $s$ has order relatively prime to $p$. 
 Then the same is true for $s^r$.
  By Lemma \ref{two_fixed}, $s$ and $s^r$ both fix exactly two points. 
  But any point fixed by $s$ is fixed by $s^r$, so
  the fixed points of $s$ and $s^r$ are identical.

If $p$ divides the order of $s$, then the order of $s$ must be exactly $p$.
To see this, suppose not and suppose that the order of $s$ is $kp$ where $k>1$ is an integer.
Then $s^k$ has order $p$ and commutes with $s$. 
However the proof of Lemma \ref{Sylow} part 3
shows that if any $t\in PGL(2,F)$ has order $p$, then any non-identity element that commutes with $t$ 
must have order $p$ as well.
So we have a contradiction and  the order of $s$ must be exactly $p$.

Assume now that $s$ has order $p$ and that $s^r$ is not the identity. 
Then $s^r$ also has order $p$, and $r$ must be relatively prime to $p$.
There exist integers $a,b$ such that $ap+br=1$, so $s^{br}=s$ and $s$ is a power of $s^r$.
 Since $s$ and $s^r$ are powers of each other, they have the same fixed points.

If $F$ is not algebraically closed, a fixed point in $\mathbb{P}^1(F)$ is also a fixed point in 
$\mathbb{P}^1(\overline{F})$, 
and we apply the argument above.
 \end{proof}

 Next we record a result for specific subgroups that do not have fixed points over $\F_q$.
 We will use this later.

 \begin{lemma}\label{nofixed}
 Let $G$ be a cyclic subgroup of $PGL(2,q)$ of order $r$, where $r>2$ and $r$ divides $q+1$.
 Then every non-identity element of $G$ has no fixed 
 points in the action on the projective line $\mathbb{P}^1 (\F_q)$.
\end{lemma}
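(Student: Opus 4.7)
The plan is to reduce the statement to the special case of a generator of $G$, and then invoke Lemma \ref{powersfix} to propagate the conclusion to every non-identity element.

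First I would observe that the order $r$ of $G$ is coprime to the characteristic $p$: since $p$ divides $q$ and $r$ divides $q+1$, we have $\gcd(r,p)=1$. Let $t$ be a generator of $G$ and let $A\in GL(2,q)$ be a lift of $t$. The main step is to show that $t$ itself has no fixed point in $\mathbb{P}^1(\F_q)$. A fixed point in $\mathbb{P}^1(\F_q)$ corresponds to an eigenvector of $A$ in $\F_q^2$, and hence to an eigenvalue of $A$ lying in $\F_q$. The trichotomy in Section \ref{la} then forces us into either case 1 or case 2. Case 2 is excluded because it would make the order of $t$ divisible by $p$, whereas we just saw that $r$ is coprime to $p$. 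So we are in case 1, but then the order $r$ of $t$ divides $q-1$. Combined with $r\mid q+1$, this forces $r\mid\gcd(q-1,q+1)\le 2$, contradicting the hypothesis $r>2$.

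By Lemma \ref{two_fixed} and the Remark following it, $t$ nonetheless has exactly two fixed points in $\mathbb{P}^1(\overline{\F_q})$, and they lie in $\mathbb{P}^1(\F_{q^2})\setminus\mathbb{P}^1(\F_q)$. Now let $s$ be any non-identity element of $G$, so $s=t^k$ for some integer $k$ with $t^k\ne 1$. If $s$ fixed a point $\omega\in\mathbb{P}^1(\F_q)$, Lemma \ref{powersfix} (applied to $t$ and the exponent $k$) would imply that $t$ also fixes $\omega$, contradicting what was just proved. Hence no non-identity element of $G$ has a fixed point in $\mathbb{P}^1(\F_q)$.

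The argument is essentially a bookkeeping exercise in the linear-algebra cases, and the only place requiring a bit of care is verifying that case 2 of Section \ref{la} is genuinely inaccessible; this is where the divisibility hypothesis $r\mid q+1$ does the crucial work by forcing $\gcd(r,p)=1$. Everything else is then a direct appeal to Lemma \ref{two_fixed} and Lemma \ref{powersfix}.
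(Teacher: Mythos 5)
Your proof is correct, but it takes a genuinely different route from the paper's. The paper argues combinatorially: it first shows that an element of odd prime order $\ell \mid r$ has a number of fixed points on $\mathbb{P}^1(\F_q)$ congruent to $q+1\equiv 0 \pmod\ell$ yet at most $2<\ell$, hence zero; it then lifts this to all elements whose order has an odd prime factor, and finally disposes of the $2$-elements by a separate, more delicate analysis split on $q\bmod 4$ (a permutation-sign argument when $q\equiv 3\pmod 4$, and a commuting-with-an-odd-order-element argument when $q\equiv 1\pmod 4$). You instead work linear-algebraically with a single generator $t$ of order $r>2$: a fixed point in $\mathbb{P}^1(\F_q)$ is a $t$-stable line in $\F_q^2$, forcing an $\F_q$-rational eigenvalue of a lift $A$, which places you in case 1 or case 2 of the Section \ref{la} trichotomy; case 2 is excluded because $r\mid q+1$ gives $\gcd(r,p)=1$, and case 1 because it would give $r\mid\gcd(q-1,q+1)\le 2<r$. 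You then propagate to every non-identity $t^k$ via Lemma \ref{powersfix}, exactly as the paper itself does later in Theorem \ref{allregtwofixed}. Your reduction to the generator is the key structural move: it sidesteps entirely the troublesome involutions (an element of order $2$ satisfies both $2\mid q-1$ and $2\mid q+1$ when $q$ is odd, so the eigenvalue dichotomy alone cannot exclude a rational fixed point for it), which is precisely where the paper has to work hardest. The trade-off is that your argument leans on the matrix trichotomy and on cyclicity from the outset, whereas the paper's odd-prime-order step is a pure orbit-counting argument that would apply to any subgroup containing such an element.
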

 
 \begin{proof}
 First assume that $r$ has an odd prime divisor, call it $\ell$.
Let $h$ be an element of $G$ of order $\ell$.
Let $\chi(h)$ be the number of fixed points of $h$, in its action on $\mathbb{P}^1 (\F_q)$.
Since $\ell$ is a prime, every orbit has size 1 or $\ell$, and so $\chi(h) \equiv q+1 \pmod{ \ell}$ .
Since $\ell$ divides $q+1$ this becomes $\chi(h) \equiv 0 \pmod{ \ell}$.
However $\chi(h) \le 2$ and $\ell >2$, so $\chi(h)$ = 0. 
 This proves that an element of odd prime order in $G$ cannot have a fixed point.
 
 Suppose $h\in G$ has order $n$ where $n$  has an odd prime divisor $\ell$.
 Then $h^{n/\ell}$ has order $\ell$.
 If $h$ has a fixed point, then $h^{n/\ell}$ has  the same fixed point, which is impossible by 
 the previous paragraph.
 This proves that an element in $G$ whose order has an odd prime divisor cannot have a fixed point.
 
 If $r$ is odd then the proof is complete. 
 In particular, if $q$ is even then the proof is complete. 
 So we assume now that $q$ is odd and that $r$ is even.
 
 It remains to show that elements in $G$ whose order is a power of 2 do not have fixed points.
 An appropriate power of such an element will give an element of order 2, and this power
 will have the same fixed points. 
 To complete the proof  it therefore suffices to show that elements of order 2 do not have fixed points.
 The argument breaks into cases depending on $q$ modulo 4.
 
 By Lemma \ref{two_fixed} an element of order 2 has either no fixed points or two fixed points.
 
 Suppose $q\equiv 3 \pmod4$.
 This includes the case that $r$ is a power of 2, because in that case  $4|r$ and $r|(q+1)$ so $4|(q+1)$.
 If an element of order 2 has two fixed points, then its disjoint cycle decomposition
is a product of $(q-1)/2$ transpositions. So its sign is $(-1)^{(q-1)/2}$ which is $-1$.
However, when $q\equiv 3 \pmod4$ an element of order 2 is a square and so has sign $+1$.
This contradiction implies that an element of order 2 has no fixed points. 
 
  Suppose $q\equiv 1 \pmod4$. 
  Since $r$ is not a power of 2, $G$ has an element of odd order.
 Let $\tau$ be an element of order 2 in $G$, which exists by assumption,
 and let $\sigma$ be an element of odd  order in $G$.
 Suppose for the sake of contradiction that $\tau$ has two fixed points, $a$ and $b$.
 Since $G$ is abelian, $\tau \sigma (a)=\sigma \tau (a)=\sigma(a)$ and the same for $b$.
 So $\sigma(a)$ and $\sigma(b)$ are also fixed points for $\tau$.
 Because $\tau$ has exactly two fixed points, and $\sigma$ has no fixed points,
 we must have $\sigma(a)=b$ and $\sigma(b)=a$.
 But then $\sigma^2(a)=a$ which is a contradiction by Lemma \ref{powersfix}
 and the fact that $\sigma$ has odd order.
 Therefore $\tau$ has no fixed points, and the proof is complete.
 \end{proof}
 
 Finally for this section, we present an application giving more detailed information
 about the action on $\mathbb{P}^1 (\overline{\F_q})$.

  \begin{thm}\label{allregtwofixed}
  Let $G$ be a cyclic subgroup of $PGL(2,q)$ of order $r$, where $r>2$ and $r$ divides $q+1$.  
 Then $G$ has exactly two fixed points in its action on $\mathbb{P}^1 (\overline{\F_q})$, 
 and all other orbits are regular.
 Furthermore, $G$ acts regularly on $\mathbb{P}^1(\F_q)$.
 \end{thm}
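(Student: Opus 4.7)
The plan is to assemble this directly from the lemmas in Section \ref{pglactp1}, with almost no new work required. The key observation is that $r \mid q+1$ forces $\gcd(r,p) = 1$, so the generator $s$ of $G$ is not unipotent, and the order of every non-identity element of $G$ is coprime to $p$.

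First I would invoke Lemma \ref{two_fixed}: since $s$ has order $r > 2$ and order coprime to $p$, it has exactly two fixed points $\alpha, \beta$ in $\mathbb{P}^1(\overline{\F_q})$. The same lemma applies to every non-identity power $s^k$, giving exactly two fixed points for each such element. Next, I would argue that all non-identity elements of $G$ share the same pair $\{\alpha,\beta\}$ of fixed points: any fixed point of $s$ is automatically fixed by $s^k$, so $\{\alpha,\beta\}$ is contained in the fixed point set of $s^k$, and since this set has size at most $2$, the two sets coincide. (One could alternatively invoke Lemma \ref{powersfix} in the reverse direction.) Consequently, $\alpha$ and $\beta$ are fixed by every element of $G$.

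Now consider any $\omega \in \mathbb{P}^1(\overline{\F_q}) \setminus \{\alpha,\beta\}$. If some non-identity $g \in G$ satisfied $g(\omega) = \omega$, then $g$ would have three distinct fixed points $\alpha, \beta, \omega$, contradicting Lemma \ref{two_fixed}. Therefore the stabilizer $G_\omega$ is trivial, and by the Orbit-Stabilizer theorem the orbit of $\omega$ has size $|G| = r$, i.e.\ is regular. This gives the first two claims: exactly two fixed points $\alpha,\beta$, and every other orbit regular.

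For the final claim about $\mathbb{P}^1(\F_q)$, I would apply Lemma \ref{nofixed}, which asserts that no non-identity element of $G$ has any fixed point in $\mathbb{P}^1(\F_q)$. Equivalently, $\alpha, \beta \notin \mathbb{P}^1(\F_q)$ (they lie in $\mathbb{P}^1(\F_{q^2}) \setminus \mathbb{P}^1(\F_q)$, as noted in the Remark after Lemma \ref{two_fixed}). Thus every point of $\mathbb{P}^1(\F_q)$ has trivial stabilizer in $G$, so every $G$-orbit on $\mathbb{P}^1(\F_q)$ is regular. Since $|\mathbb{P}^1(\F_q)| = q+1$ is a multiple of $r = |G|$, this is consistent, and the action is regular in the sense defined in Section 2. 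There is no serious obstacle in this argument; the only point that requires care is making sure the fixed points of different powers of $s$ coincide rather than being merely comparable, which is handled by the counting argument via Lemma \ref{two_fixed}.
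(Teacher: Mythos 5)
Your proof is correct and follows essentially the same route as the paper's: both reduce everything to Lemma \ref{two_fixed} plus the fact that all non-identity elements of $G$ share the same two fixed points (you get this by the trivial inclusion plus a counting argument, the paper by citing Lemma \ref{powersfix}), and both finish the regularity claim on $\mathbb{P}^1(\F_q)$ by invoking Lemma \ref{nofixed}. The only cosmetic difference is that the paper starts from a non-regular orbit and shows it is a singleton fixed point, whereas you start from the fixed points of the generator and show every other stabilizer is trivial.
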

 
\begin{proof}
In a non-regular orbit $\Omega$, there is an element $\omega$ such that $\omega$ is a fixed
point for some nonidentity element of $G$.
 By Lemma \ref{powersfix}, $\omega$ is a fixed point for a generator of $G$,
 and therefore is a fixed point for the whole group $G$.
 So the non-regular orbit consists of just one element, i.e., $\Omega = \{ \omega\}$.

By  Lemma \ref{two_fixed}, $G$ has exactly two fixed points.
Therefore, $G$ has exactly two non-regular orbits, each of size 1, and all other
orbits are regular.

The fixed points are not in 
 $\mathbb{P}^1(\F_q)$ by Lemma \ref{nofixed}.
 Therefore $G$ acts regularly on $\mathbb{P}^1(\F_q)$.
\end{proof}

 \subsection{Commuting Actions}\label{sect_comm_action}

 We want to consider next the following situation. We have a finite subgroup $G$ of $PGL(2,F)$. We assume that $G$ acts by fractional transformations on the roots (in some extension field of $F$) of a polynomial $f$ in $F[x]$. We will also assume that $G$ acts transitively and regularly on the roots. Furthermore, we will assume that $f$ is irreducible over $F$ so
 the Galois group acts transitively on the roots. 
 The key point is that the actions of $G$ and the Galois group on the roots commute. 
 We abstract this situation in the following two lemmas, each of which is useful.
 These lemmas will be applied later.
 
\begin{lemma}\label{comm_action1}
Let $\Omega$ be a finite set and let $G$ and $H$ be subgroups of the group of all permutations of $\Omega$. Suppose that $G$ acts transitively and regularly on $\Omega$, and $H$ acts transitively. Suppose also that
$gh(\alpha)=hg(\alpha)$
for all $g$ in $G$, $h$ in $H$ and $\alpha$ in $\Omega$. Then $H$ acts regularly on $\Omega$
and is isomorphic to $G$.
\end{lemma}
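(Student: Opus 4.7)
The plan is to exploit the regularity of $G$ to encode each element of $H$ as an element of $G$, via its effect on a single fixed point of $\Omega$. Concretely, fix any $\alpha\in\Omega$. Because $G$ is regular, for every $\beta\in\Omega$ there is a unique $g\in G$ with $g(\alpha)=\beta$. In particular, for each $h\in H$ there is a unique element $\varphi(h)\in G$ satisfying $\varphi(h)(\alpha)=h(\alpha)$. The two claims of the lemma will come from analysing this map $\varphi:H\to G$.

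First I would show that $H$ acts regularly. Suppose $h,h'\in H$ satisfy $h(\alpha)=h'(\alpha)$. For any other point $\beta\in\Omega$, write $\beta=g(\alpha)$ with $g\in G$; using the commuting hypothesis,
\[
h(\beta)=h(g(\alpha))=g(h(\alpha))=g(h'(\alpha))=h'(g(\alpha))=h'(\beta).
\]
Hence $h=h'$. Taking $h'=\mathrm{id}$ shows the $H$-stabiliser of $\alpha$ is trivial, and since $H$ is already assumed transitive, it acts regularly. In particular $|H|=|\Omega|=|G|$, and $\varphi$ is injective.

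Next I would verify that $\varphi$ is multiplicative up to reversal of factors. Given $h,h'\in H$, using the commuting action once more,
\[
(hh')(\alpha)=h(h'(\alpha))=h(\varphi(h')(\alpha))=\varphi(h')(h(\alpha))=\varphi(h')\varphi(h)(\alpha),
\]
so by uniqueness $\varphi(hh')=\varphi(h')\varphi(h)$. Thus $\varphi$ is an anti-homomorphism; composing with the inversion map $g\mapsto g^{-1}$ on $G$ yields a genuine homomorphism $\psi:H\to G$, $\psi(h)=\varphi(h)^{-1}$. Injectivity of $\varphi$ gives injectivity of $\psi$, and then $|H|=|G|$ forces $\psi$ to be an isomorphism.

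There is no real obstacle; the only delicate point is the bookkeeping that the natural map $\varphi$ is an \emph{anti}-homomorphism rather than a homomorphism (a consequence of ``acting'' on the left while translating on the right), and one must cure this by composing with inversion. Everything else is a direct exploitation of regularity of $G$ and the commuting action hypothesis.
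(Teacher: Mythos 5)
Your proof is correct and follows essentially the same route as the paper's: establish regularity of $H$ via the commuting hypothesis, build the bijection determined by agreement at a base point $\alpha$, observe it is an anti-homomorphism, and compose with inversion to get an isomorphism. The only (immaterial) difference is that you define the comparison map from $H$ into $G$ using the hypothesised regularity of $G$, whereas the paper maps $G$ into $H$ using the regularity of $H$ it has just proved.
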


\begin{proof}
Let $\alpha$ be an element of $\Omega$ and let $H_\alpha$ be the stabilizer subgroup of $\alpha$ in $H$. Let $\beta$ be any other element in $\Omega$. Since $G$ acts transitively on $\Omega$, there is some $g\in G$ with $\beta=g(\alpha)$. Let $h$ be an element of $H_\alpha$. We have then
\[
h(\beta)=h(g(\alpha))=g(h(\alpha))=g(\alpha)=\beta.
\]
Thus $h$ fixes $\beta$ and since $\beta$ is an arbitrary element of $\Omega$, $h$ acts trivially on $\Omega$ and hence is the identity. It follows that $H_\alpha=1$ and we deduce that $H$ acts regularly on $\Omega$. 

Given $\alpha$ as above, let $g$ be any element of $G$. We define $g^*$ in $H$ to be the unique element in $H$ with $g^*(\alpha)=g(\alpha)$. We claim that $g\to g^*$ is an injective map from $G$ onto $H$. For suppose that $g^*=g_1^*$ for $g_1$ in $G$. Then we have 
$g^*(\alpha)=g_1^*(\alpha)$ and hence $g(\alpha)=g_1(\alpha)$.
This implies $g=g_1$, since $G$ acts regularly, and proves our claim.

We now wish to show that $g\to g^*$ is an anti-homomorphism, in other words, that
\[
(g_1g)^*=g^*g_1^*
\]
for all $g$ and $g_1$ in $G$. We have
\[
g^*g_1^*(\alpha)=g^*g_1(\alpha)=g_1g^*(\alpha)=g_1g(\alpha)=(g_1g)^*(\alpha),
\]
where we have used the fact that the actions of $G$ and $H$ commute. This proves that
\[
(g_1g)^*=g^*g_1^*,
\]
as required. 

To conclude, we define a map $\theta:G\to H$ by 
\[
\theta(g)=(g^*)^{-1}.
\]
The map $\theta$ is a homomorphism, since
\[
\theta(g_1g)=((g_1g)^*)^{-1}=(g^*g_1^*)^{-1}=(g_1^*)^{-1}(g^*)^{-1}=\theta(g_1)\theta(g).
\]
We thus have a monomorphism from $G$ to $H$, and since the groups have the same order $|\Omega|$, for they both act regularly on $\Omega$, $G$ and $H$ are isomorphic.
\end{proof}

Here is a generalization.
 
\begin{lemma}\label{comm_action2}
 Let $G$ and $H$ be subgroups of the group of permutations of a finite set $\Omega$. Suppose that $G$ acts transitively and regularly on $\Omega$. Suppose also that the actions of $G$ and $H$ commute. Then $G$ transitively permutes the $H$-orbits on $\Omega$, and if $T$ is the stabilizer in $G$ of a given $H$-orbit, $T$ is isomorphic to $H$.
\end{lemma}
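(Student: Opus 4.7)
The plan is to reduce the statement to Lemma \ref{comm_action1} applied inside a single $H$-orbit, so the two bullet points become two separate easy verifications plus an appeal to the earlier lemma.

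First I would show that $G$ permutes the set of $H$-orbits transitively. Given $g \in G$ and an $H$-orbit $\mathcal{O}$, the commuting-actions hypothesis gives $h(g(\omega)) = g(h(\omega)) \in g(\mathcal{O})$ for every $\omega \in \mathcal{O}$ and $h \in H$, so $g(\mathcal{O})$ is $H$-invariant; applying the same observation to $g^{-1}$ shows that $g(\mathcal{O})$ is itself a single $H$-orbit. Transitivity of $G$ on the set of $H$-orbits is then immediate from transitivity of $G$ on $\Omega$: any $g$ with $g(\omega_1) = \omega_2$ (with $\omega_i$ lying in orbits $\mathcal{O}_i$) sends $\mathcal{O}_1$ to $\mathcal{O}_2$.

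Next I would fix an $H$-orbit $\mathcal{O}$ and let $T$ be its stabilizer in $G$, and prove that $T$ acts transitively and regularly on $\mathcal{O}$. Regularity is inherited from $G$, since $T \le G$ and $G$ acts regularly on all of $\Omega$. For transitivity, pick a basepoint $\omega_0 \in \mathcal{O}$ and any $\omega \in \mathcal{O}$; transitivity of $G$ on $\Omega$ yields a (unique) $g \in G$ with $g(\omega_0) = \omega$, and this $g$ lies in $T$ because $g(\mathcal{O})$ is an $H$-orbit containing $\omega$ and hence equals $\mathcal{O}$.

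Finally I would apply Lemma \ref{comm_action1} to the commuting actions of $T$ and $H$ on $\mathcal{O}$: $T$ acts transitively and regularly by the previous step, $H$ acts transitively by construction, and the commutation survives restriction. The only real subtlety, and the point where I expect the main (small) obstacle, is that Lemma \ref{comm_action1} views its two groups as sitting inside the permutation group of the underlying set, so I must check that $H$ acts faithfully on $\mathcal{O}$ in order to conclude $T \cong H$ rather than $T \cong H/K$ for some kernel $K$. But if $h \in H$ fixes every point of $\mathcal{O}$, then for an arbitrary $\omega \in \Omega$, writing $\omega = g(\omega_0)$ with $g \in G$, we compute $h(\omega) = hg(\omega_0) = gh(\omega_0) = g(\omega_0) = \omega$, so $h$ is the identity of the whole permutation group. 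Thus the regularity of $G$ on $\Omega$, together with the commuting actions, forces faithfulness of $H$ on $\mathcal{O}$, and Lemma \ref{comm_action1} then gives $T \cong H$.
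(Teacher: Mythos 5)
Your proposal is correct and follows essentially the same route as the paper's own proof: show $G$ transitively permutes the $H$-orbits, show the stabilizer $T$ of one orbit acts transitively and regularly on it, verify that $H$ acts faithfully on that orbit (using regularity of $G$ and the commuting actions, exactly as the paper does), and then invoke Lemma \ref{comm_action2}'s predecessor, Lemma \ref{comm_action1}. You even spell out explicitly the transitivity of $T$ on the orbit, which the paper dispatches as a ``general principle of permutation group theory,'' so there is nothing to correct.
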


\begin{proof}
Let $\Omega'$ be an $H$-orbit on $\Omega$. Then, for all $g$ in $G$, $g\Omega'$ is also
an $H$-orbit, since the $G$ and $H$ actions commute. Thus $G$ permutes the $H$-orbits, and indeed they form a system of imprimitivity for $G$. As is well known, and easy to prove, $G$ permutes these orbits transitively, since it acts transitively on $\Omega$.

Let $T$ be the stabilizer in $G$ of $\Omega'$. It is again a general principle of permutation group theory that $T$ acts transitively on $\Omega'$, again because $G$ acts transitively. The action of $T$ is regular, because $G$, and hence all its subgroups, acts regularly on $\Omega$. 

We now show that $H$ acts faithfully on $\Omega'$, and hence may be considered to be a subgroup of the group of permutations of $\Omega'$. For let $K$ be the kernel of the action of $H$ on $\Omega'$ and let $\omega$ be any point of $\Omega'$. Let $g$ be any element of $G$ and $h$ any element of $K$. Then since $g$ and $h$ commute, we have
\[
h(g \omega)=g(h \omega)=g(\omega)
\]
and we see that $h$ also fixes $g \omega$. As any element of $\Omega$ is expressible as
$g \omega$ for some $\omega$ in $\Omega'$, it follows that $h$ acts trivially on $\Omega$ and hence is the identity.

The conclusion of the lemma now follows from Lemma \ref{comm_action1}.
\end{proof}

 \section{Orbit Polynomials}\label{orbitp}
 
 Let $F$ be a field,  let $x$ be an indeterminate over $F$, and let $F(x)$ be the 
 field of rational functions in $x$.
 Let $G$ be a finite group of automorphisms of $F(x)$,
 i.e., a finite subgroup of $PGL(2,F)$.
 We denote the image of $x$ under $s\in G$ by $s(x)$.

 Let $T$ be another indeterminate.
 We define the \emph{orbit polynomial} of $G$ to be
 \[
 P_G(T)=\prod_{s\in G} \bigl(T-s(x) \bigr),
 \] 
 which is an element of the polynomial ring $F(x)[T]$.
 Note that $P_G(T)$ also depends on the field $F$, but we suppress this from the notation.
 This polynomial can be found in many previous works, such as \cite{GS} for example.

 Both the roots and the coefficients of $P_G(T)$ are elements of $F(x)$.
 It is clear that the coefficients of $P_G(T)$ are fixed by every element of $G$,
 and thus the coefficients lie in $F(x)^G$.
 Some coefficients in $P_G(T)$ may be constant, however at least one coefficient
 must be non-constant for otherwise $x$ would be algebraic over $F$.
 The orbit polynomial $P_G(T)$ has $x$ as a root, and is the minimal polynomial
 of $x$ over the field $F(x)^G$.
 The orbit polynomial appears in the proof of L\"uroth's Theorem in \cite{vdw}.
 All the nonconstant coefficients in $P_G(T)$ are generators for $F(x)^G$, as shown there.
 

  \subsection{Orbit Polynomial Basics}
 
 We make some elementary observations about the orbit polynomial.
The first two lemmas may be found on the 
 arxiv in a paper by Bluher \cite{B}.

 \begin{lemma}\label{orbitpoly1}
 Let $F$ be a field.
 Let $G$ be a subgroup of $PGL(2,F)$.
Let $\Phi(x)=f(x)/g(x)$ be a generator for $F(x)^G$, the field of $G$-invariant rational functions.
Let $\alpha \in \overline{F}$.

If $\alpha$ belongs to a regular orbit for $G$,  then 
$G$ acts regularly on the
roots of $f(T) - \Phi(\alpha) g(T)$ and
\[
f(T) - \Phi(\alpha) g(T)=\prod_{s\in G} \bigl(T-s(\alpha) \bigr).
\]
If $\alpha$ belongs to a non-regular orbit, then
\[
f(T) - \Phi(\alpha) g(T)=\prod_{s\in G/G_\alpha} \bigl(T-s(\alpha)\bigr)^{|G_\alpha|}
\]
where $G_\alpha$ is the stabilizer of $\alpha$, and $G/G_\alpha$ is a set of coset representatives.
\end{lemma}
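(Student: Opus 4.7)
The plan is to establish the polynomial identity
\[
f(T) - \Phi(x)g(T) = \prod_{s \in G}(T - s(x))
\]
in $F(x)[T]$ and then specialize at $x = \alpha$.

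First I would form the orbit polynomial $P_G(T) = \prod_{s \in G}(T - s(x)) \in F(x)[T]$ and note that its coefficients, being elementary symmetric functions of $\{s(x) : s \in G\}$, are $G$-invariant (any $t \in G$ permutes the set via $s \mapsto ts$), hence lie in $F(x)^G = F(\Phi(x))$. Thus $P_G(T) \in F(\Phi(x))[T]$. Since $P_G(x) = 0$, $P_G$ is monic, and $\deg_T P_G = |G| = [F(x) : F(\Phi(x))]$, it is the minimal polynomial of $x$ over $F(\Phi(x))$.

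Next I would observe that $Q(T) := f(T) - \Phi(x) g(T) \in F(\Phi(x))[T]$ also has $x$ as a root: $Q(x) = f(x) - \Phi(x)g(x) = 0$. Under the normalization $\deg f = |G|$, $\deg g < |G|$, and $f$ monic (the convention adopted in the paper's later theorems), $Q(T)$ is monic of degree $|G|$ in $T$. By minimality of $P_G$ we conclude $Q(T) = P_G(T)$, which is the displayed identity.

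To finish, I specialize $x \mapsto \alpha$. The assumption $\deg f > \deg g$ yields $\Phi(\infty) = \infty$, and $G$-invariance of $\Phi$ then forces $s(\alpha) \neq \infty$ for every $s \in G$ whenever $\Phi(\alpha)$ is finite (otherwise $\Phi(\alpha) = \Phi(s(\alpha)) = \Phi(\infty) = \infty$). Hence the substitution is valid and produces $f(T) - \Phi(\alpha)g(T) = \prod_{s \in G}(T - s(\alpha))$. If $\alpha$ lies in a regular orbit the $|G|$ points $s(\alpha)$ are distinct and $G$ permutes them regularly, proving the first assertion. If $\alpha$ has nontrivial stabilizer $G_\alpha$, then each distinct image $s(\alpha)$ (as $s$ runs over a transversal of $G_\alpha$ in $G$) appears exactly $|G_\alpha|$ times in $\prod_{s \in G}(T - s(\alpha))$, yielding the second assertion.

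The main obstacle I anticipate is making the specialization rigorous, i.e., ruling out $s(\alpha) = \infty$; this is precisely what the $G$-invariance-of-$\Phi$ argument above accomplishes. A secondary technicality is the normalization of $f$ and $g$: the identity as stated requires $f$ monic of degree $|G|$ with $\deg g < |G|$, and without this convention one would need to carry along a leading-coefficient factor throughout.
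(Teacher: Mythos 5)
Your proof is correct and takes essentially the same route as the paper's: both establish the identity $f(T)-\Phi(x)g(T)=\prod_{s\in G}\bigl(T-s(x)\bigr)$ in $F(x)[T]$ and then specialize $x\mapsto\alpha$, the only difference being that the paper verifies the identity by observing that the $|G|$ distinct elements $s(x)$ are all roots of a degree-$|G|$ polynomial, whereas you invoke minimality over $F(\Phi(x))$. Your explicit treatment of the normalization of $f,g$ and of why no $s(\alpha)=\infty$ when $\Phi(\alpha)$ is finite is a careful touch that the paper leaves implicit.
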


 \begin{proof}
 Consider the polynomial $f(T) - \Phi(x) g(T)$, as an element of $F(x)[T]$. 
The coefficients are in $F(x)$ (actually in $F(x)^G$), and the degree is $|G|$.
This polynomial has $x$ as a root.
Furthermore, if $s\in G$ then $s(x)$ is also a root, because $\Phi(x)$ is invariant under $G$
which implies $\Phi(x)=\Phi(s(x))=f(s(x))/g(s(x))$.
So the roots are all the images of $x$ under $G$, and these are distinct.
Therefore this polynomial is equal to the orbit polynomial, $\prod_{s \in G} (T - s(x))$.

Now replace $x$ with $\alpha \in \overline{F}$.

On the one hand we get the polynomial $f(T) - \Phi(\alpha) g(T)\in  \overline{F}[T]$.
On the other hand, we get the orbit polynomial of $G$
 with $x$ replaced by $\alpha$, namely $\prod_{s \in G} (T - s(\alpha))$.
The images of $\alpha$ under the group elements may or may not be distinct,
depending on whether $\alpha$ comes from a regular or non-regular orbit.
If $\alpha$ belongs to a regular orbit for $G$,  then 
$G$ acts regularly on the
roots of $f(T) - \Phi(\alpha) g(T)$ and
\[
f(T) - \Phi(\alpha) g(T)=\prod_{s\in G} \bigl(T-s(\alpha) \bigr)
\]
and if $\alpha$ belongs to a non-regular orbit, then
\[
f(T) - \Phi(\alpha) g(T)=\prod_{s\in G/G_\alpha} \bigl(T-s(\alpha)\bigr)^{|G_\alpha|}
\]
because each element of the orbit has a point stabilizer which is a coset of $G_\alpha$.
\end{proof}

Let $\alpha \in \overline{F}$.
We will define $F_\alpha (T):=f(T) - \Phi(\alpha) g(T)$ for the rest of the paper.

\begin{lemma}\label{Phiorbit}
Let $G$ be a finite subgroup of $PGL(2,F)$, where $F$ is a field. Let $\Phi(x)=f(x)/g(x)$ be a generator of $F(x)^G$.
Let $\alpha$ and $\beta$ be elements of $\mathbb{P}^1 (F)$.
Then $\Phi(\alpha)=\Phi(\beta)$ if and only if $\alpha$ and $\beta$ are in the same $G$-orbit.
\end{lemma}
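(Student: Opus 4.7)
The forward direction is immediate from the $G$-invariance of $\Phi$: if $\beta = s(\alpha)$ for some $s \in G$, then $\Phi = \Phi \circ s$ yields $\Phi(\beta) = \Phi(s(\alpha)) = \Phi(\alpha)$.

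For the converse, set $c = \Phi(\alpha) = \Phi(\beta)$, and suppose first that $c \neq \infty$, so $c \in F$. Then both $\alpha$ and $\beta$ are roots of the polynomial $F_\alpha(T) = f(T) - c\,g(T) \in F[T]$, since $f(\alpha) - c\,g(\alpha) = g(\alpha)(\Phi(\alpha) - c) = 0$ and similarly for $\beta$. By Lemma \ref{orbitpoly1}, the multiset of roots of $F_\alpha(T)$ in $\overline{F}$ is precisely the $G$-orbit of $\alpha$ (each element occurring with multiplicity $|G_\alpha|$). Hence $\beta$ must lie in the $G$-orbit of $\alpha$.

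The only nontrivial obstacle is the case $c = \infty$, which also covers the awkward situation where $\alpha$ or $\beta$ equals $\infty$ and direct evaluation of $F_\alpha$ there is delicate. The cleanest way to dispose of it is a change of generator: put $\Phi' = 1/(\Phi - c_0)$ for any $c_0 \in F$. Since $\Phi'$ is the post-composition of $\Phi$ with the M\"obius map $y \mapsto 1/(y - c_0) \in PGL(2,F)$, it is again a generator of $F(x)^G$; and when $c = \infty$ we have $\Phi'(\alpha) = \Phi'(\beta) = 0 \in F$, so the previous paragraph applies to $\Phi'$. The main obstacle is therefore only this bookkeeping around $\infty$; the substantive content is carried entirely by Lemma \ref{orbitpoly1}.
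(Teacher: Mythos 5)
Your proof is correct and follows essentially the same route as the paper's: both directions reduce to the observation that $\alpha$ and $\beta$ are roots of the same specialization $f(T)-c\,g(T)$, whose roots form exactly one $G$-orbit by Lemma \ref{orbitpoly1}. Your extra bookkeeping for $c=\infty$ via the change of generator $\Phi'=1/(\Phi-c_0)$ is a welcome addition that the paper's proof passes over in silence; just note that folding the case $\alpha=\infty$ or $\beta=\infty$ into the case $c=\infty$ tacitly uses the normalization $\deg f>\deg g$.
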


\begin{proof}
If $\alpha$ and $\beta$ are in the same $G$-orbit, then $\Phi(\alpha)=\Phi(\beta)$ because 
$\Phi(x)$ is invariant under $G$.

Conversely, suppose $\Phi(\alpha)=\Phi(\beta)$.
Then the polynomials $F_\alpha(T)$ and $F_\beta(T)$ are equal, so $\beta$ is a root of $F_\alpha(T)$.
By Lemma \ref{orbitpoly1}, $\beta$ must lie in the $G$-orbit of $\alpha$.
\end{proof}

Next we make some observations about the coefficients in the orbit polynomial.

\begin{lemma}\label{gens0}
Let $G$ be a finite subgroup of $PGL(2,F)$ and  let $|G|=m$.
Let  $F(x)$ 
be the function field over $F$ in the indeterminate $x$. 
Let the orbit polynomial of $G$ be $P_G(T)=\sum_{r=0}^m \alpha_r (x) T^r$,
where $\alpha_r\in F(x)$. 
Then, for each $r$, either $\alpha_r(x)$ is constant or 
$\alpha_r(x)=B_r(x)/A(x)$ where $B_r(x)$ and $A(x)$ are relatively prime polynomials in $F[x]$,
$\deg B_r(x)=m$, and $\deg A(x)<m$.
Furthermore  $A(x)$ does not depend on $r$, and $A(x)$ splits completely over $F$.
\end{lemma}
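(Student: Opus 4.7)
The approach is to pin down the orbit polynomial explicitly in terms of a normalized generator of $F(x)^G$, then read the coefficients off.

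First, fix a generator $\Phi(x)=f(x)/g(x)$ of $F(x)^G$ in which $f,g\in F[x]$ are coprime with $\deg f=m$ and $\deg g<m$; any generator has degree $m$, and by applying the maps $\Phi\mapsto 1/\Phi$ and/or $\Phi\mapsto\Phi-c$ one can always force the numerator to have strictly larger degree than the denominator.

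The crucial step is the identification
\[
P_G(T)=\frac{f(T)-\Phi(x)\,g(T)}{f_m},
\]
where $f_m$ is the leading coefficient of $f$. Indeed, $f(T)-\Phi(x)g(T)\in F(\Phi)[T]$ has $x$ as a root (substitute $T=x$) and is of $T$-degree exactly $m$ with leading coefficient $f_m$, since $\deg g<m$; and $P_G(T)$ is the monic minimal polynomial of $x$ over $F(x)^G=F(\Phi)$ of the same degree $m$, so the two polynomials agree up to the scalar $f_m$. Extracting the coefficient of $T^r$ and substituting $\Phi=f/g$ yields
\[
\alpha_r(x)=\frac{f_r-\Phi(x)\,g_r}{f_m}=\frac{f_r\,g(x)-g_r\,f(x)}{f_m\,g(x)},
\]
where $f_r,g_r\in F$ denote the coefficients of $T^r$ in $f$ and $g$ respectively.

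From here the verifications are routine. If $g_r=0$ then $\alpha_r=f_r/f_m$ is constant. Otherwise set $A(x):=f_m\,g(x)$, which is independent of $r$, and $B_r(x):=f_r\,g(x)-g_r\,f(x)$. In $B_r$ the term $-g_r f$ of degree $m$ strictly dominates $f_r g$ of degree less than $m$, so $\deg B_r=m$, while $\deg A=\deg g<m$. For coprimality, any irreducible $\pi\in F[x]$ dividing both $B_r$ and $g$ would also divide $B_r-f_r g=-g_r f$; since $g_r\in F^\times$ this forces $\pi\mid f$, contradicting $\gcd(f,g)=1$. Hence $\gcd(B_r,g)=1$, so $\gcd(B_r,A)=1$.

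Finally, $A$ splits over $F$ if and only if $g$ does. The roots of $g$ in $\overline{F}$ are precisely the points where $\Phi$ takes the value $\infty$; since $\deg f>\deg g$, the point $\infty$ is another such point, and by Lemma \ref{Phiorbit} every $\alpha\in\mathbb{P}^1(\overline{F})$ with $\Phi(\alpha)=\infty$ lies in the $G$-orbit of $\infty$. Because $G\subset PGL(2,F)$ sends $\mathbb{P}^1(F)$ to itself, this orbit is contained in $F\cup\{\infty\}$, so the finite roots of $g$ all lie in $F$. The real content of the argument is the minimal-polynomial identification in the second step; everything else is bookkeeping.
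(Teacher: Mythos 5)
Your proof is correct, but it takes a genuinely different route from the paper's. The paper works directly with the product defining $P_G(T)$: writing the elements of $G$ as $f_i(x)/g_i(x)$ with $f_i,g_i$ of degree at most $1$, it takes $A(x)=\prod_i g_i(x)$ as the common denominator, so that $A$ is visibly a product of linear polynomials over $F$ (whence the splitting is immediate), and it obtains $\deg B_r=m$ from the fact that a nonconstant coefficient is a generator of $F(x)^G$ and therefore has degree exactly $m$. You instead normalize a generator $\Phi=f/g$ with $\deg f=m>\deg g$ and identify $P_G(T)$ with $\bigl(f(T)-\Phi(x)g(T)\bigr)/f_m$ via the minimal-polynomial characterization; this gives the closed form $\alpha_r=(f_r g-g_r f)/(f_m g)$, which makes the degree and coprimality claims transparent and in fact delivers Lemma \ref{gens1} (the linear $1$-parameter structure) essentially for free, since each $\alpha_r$ is visibly an $F$-affine function of $\Phi$. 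The price you pay is that the splitting of $A=f_m g$ over $F$ is no longer automatic and requires your orbit-of-$\infty$ argument. That argument is sound in substance, but one step should be tightened: Lemma \ref{Phiorbit} is stated for finite values of $\Phi$ and for points of $\mathbb{P}^1(F)$, so to treat the fibre over $\infty$ in $\mathbb{P}^1(\overline{F})$ you should apply it (or Lemma \ref{orbitpoly1}) to the generator $1/\Phi=g/f$ at the value $0$; this shows that the poles of $\Phi$ form a single $G$-orbit, which contains $\infty$ and is therefore contained in $\mathbb{P}^1(F)$, so the roots of $g$ lie in $F$. With that small adjustment the proof is complete.
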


\begin{proof}

Let the elements of $G$ be
$$\frac{f_i(x)}{g_i(x)}=\frac{a_i x+b_i}{c_ix+d_i}$$ for $1 \leq i \leq m$,
with $f_1(x)=x$, $g_1(x)=1$, and $a_i, b_i, c_i, d_i \in F$.

The coefficient of $T^r$ in the orbit polynomial is the 
 sum over all $\binom{m}{r}$ possible  products of the $f_i(x)/g_i(x)$ taken $r$ at a time.

A typical term is say
\begin{equation}\label{typical}
\frac{f_{i_1}(x)...f_{i_r}(x)}{g_{j_1}(x) ...g_{j_r}(x)}
\end{equation}
and we add all $\binom{m}{r}$ such terms together to obtain the coefficient of $T^r$.

Let $$A(x)=g_1(x) g_2(x) \ldots g_m(x)$$ 
which is a polynomial of degree at most $m-1$ in $x$ (because $g_1(x)=1$).
Then $A(x)$ serves as a common denominator when we add the $\binom{m}{r}$ rational functions
\eqref{typical} together. 

It is clear then that the sum
of all  $\binom{m}{r}$ terms \eqref{typical}  is a rational function of degree at most $m$ of the form 
$B_r(x)/A(x)$. 
As  stated earlier,
any nonconstant coefficient of the orbit polynomial
will be a generator for $F(x)^G$, and must have degree $m$.
Therefore $B_r(x)/A(x)$  is either a constant 
or else $B_r(x)$ has degree $m$.
\end{proof}

\begin{lemma}\label{gens1}
Let $G$ be a finite subgroup of $PGL(2,F)$ and  let $|G|=m$.
Let  $F(x)$ 
be the function field over $F$ in the indeterminate $x$. 
Let the orbit polynomial of $G$ be $P_G(T)=\sum_{i=0}^m \alpha_i T^i$,
where $\alpha_i\in F(x)$. 
If $\alpha_r$ and $\alpha_s$ are both nonconstant, then there exist $\lambda, \mu \in F$
such that $\alpha_r= \lambda \alpha_s + \mu$.
\end{lemma}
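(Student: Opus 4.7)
My plan is to combine two observations already available. First, since every nonconstant coefficient of $P_G(T)$ is a generator for $F(x)^G$ over $F$ (noted in the discussion preceding Lemma \ref{orbitpoly1}), both $\alpha_r$ and $\alpha_s$ generate the same field $F(\alpha_r)=F(\alpha_s)=F(x)^G$. By the remarks in Section 2.2 classifying the generators, there exist $a,b,c,d\in F$ with $ad-bc\ne 0$ such that
\[
\alpha_s=\frac{a\alpha_r+b}{c\alpha_r+d}.
\]
The lemma asserts $c=0$, so my sole task is to rule out $c\ne 0$.

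Here the denominator normalization from Lemma \ref{gens0} does the work. Write $\alpha_r=B_r(x)/A(x)$ and $\alpha_s=B_s(x)/A(x)$ with the common denominator $A(x)$, where $\deg B_r=\deg B_s=m$, $\deg A<m$, $\gcd(A,B_r)=\gcd(A,B_s)=1$. Substituting and clearing denominators gives
\[
B_s(x)\bigl(cB_r(x)+dA(x)\bigr)=A(x)\bigl(aB_r(x)+bA(x)\bigr),
\]
which rearranges to
\[
c\,B_r(x)B_s(x)=A(x)\bigl(aB_r(x)+bA(x)-dB_s(x)\bigr).
\]
Thus $A(x)$ divides $c\,B_r(x)B_s(x)$ in $F[x]$. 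Since $A$ is coprime to each of $B_r$ and $B_s$, it divides the constant $c$; if $\deg A\ge 1$ this forces $c=0$.

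The only remaining case is $A(x)$ constant, and this is handled separately: after absorbing $A$, both $\alpha_r$ and $\alpha_s$ are genuine polynomials of degree $m$, so the identity $\alpha_s=(a\alpha_r+b)/(c\alpha_r+d)$ requires the degree-$m$ polynomial $c\alpha_r+d$ to divide the polynomial $a\alpha_r+b$ of degree at most $m$. If $c\ne 0$, polynomial division gives $a\alpha_r+b=(a/c)(c\alpha_r+d)+(b-ad/c)$, so divisibility forces $ad-bc=0$, contradicting invertibility of the Möbius matrix. Hence $c=0$ again, and setting $\lambda=a/d$, $\mu=b/d$ completes the proof.

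The only potentially subtle point is making sure the coprimality claim $\gcd(A,B_r)=1$ is actually in force, but this is exactly the standing assumption in Section 2.2 that we always write rational functions with coprime numerator and denominator, so Lemma \ref{gens0} delivers it. Everything else is elementary polynomial arithmetic, so I do not anticipate any real obstacle.
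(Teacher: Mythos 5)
Your proof is correct, but it takes a genuinely different route from the paper's. The paper's argument is a two-line degree count: since $\deg B_r=\deg B_s=m$ by Lemma \ref{gens0}, one chooses $\lambda$ to cancel the leading terms, so that $\alpha_r-\lambda\alpha_s=(B_r-\lambda B_s)/A$ has degree $<m$; being $G$-invariant, it must then be constant, because a nonconstant $G$-invariant rational function has degree at least $|G|=m$. You instead start from the fact that any two generators of $F(x)^G$ are related by a M\"obius transformation $\alpha_s=(a\alpha_r+b)/(c\alpha_r+d)$ and then force $c=0$ by clearing denominators and using the coprimality $\gcd(A,B_r)=\gcd(A,B_s)=1$ from Lemma \ref{gens0} (plus a separate degree argument when $A$ is constant). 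Both arguments are sound and rest on the same two ingredients (Lemma \ref{gens0} and the degree-$=|G|$ characterization of generators), but the paper's version is shorter and does not need the M\"obius classification of generators or the coprimality of $B_r$ with $A$, only the fact that $B_r$ and $B_s$ have the same degree $m$ while $\deg A<m$. Your version has the mild virtue of making explicit exactly where a general M\"obius relation between two coefficients degenerates to an affine one. Two small cosmetic points: you prove $\alpha_s=\lambda\alpha_r+\mu$ with $\lambda=a/d\neq 0$ rather than the stated $\alpha_r=\lambda\alpha_s+\mu$, which is fine since the relation is invertible (or by symmetry of $r$ and $s$); and in the constant-$A$ case the contradiction is immediate from comparing degrees $2m$ versus $\le m$ in $B_s(cB_r+d)=aB_r+b$, without invoking polynomial division.
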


\begin{proof}
Suppose $B_r/A(x)$ and $B_s(x)/A(x)$ are two non-constant coefficients of $T^r$ and $T^s$
in the orbit polynomial $P_G(T)$.
By Lemma \ref{gens0}, both $B_r(x)$ and $B_s(x)$  have leading term $x^m$ with non-zero coefficient.
Multiply $B_s(x)$ by  $\lambda \in F$ to make $B_r(x)$ and
$B_s(x)$ have the same  coefficient of $x^m$.
Then $B_r(x)/A(x)-\lambda B_s(x)/A(x)$ is a rational function of degree less than $m$ that is $G$-invariant, and so it must be a constant.
So there exists $\mu \in F$ such that 
$$\frac{B_r}{A(x)}=\lambda \frac{B_s(x)}{A(x)}+\mu.$$
\end{proof}

\subsection{Linear 1-Parameter Family}
 
 Let $t$ be an element of an extension field of $F$, which
 may be algebraic or transcendental over $F$.
 We define a \emph{linear $1$-parameter family} of polynomials in $F(t)[T]$ to be a collection of polynomials
 of the form $\sum_{i=0}^n \alpha_i T^i$ where each coefficient has the form $\alpha_i=a_it+b_i$
 for some $a_i, b_i\in F$.
 
  \begin{cor}\label{gens2}
  The orbit polynomial belongs to a linear 1-parameter family of
 polynomials. 
  \end{cor}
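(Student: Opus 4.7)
The plan is to apply Lemma \ref{gens1} directly, using one distinguished nonconstant coefficient as the parameter $t$. First I would note that the orbit polynomial $P_G(T)$ must have at least one nonconstant coefficient $\alpha_j \in F(x)$; otherwise all coefficients of $P_G(T)$ would lie in $F$, forcing $x$ (a root of $P_G$) to be algebraic over $F$, which it is not. Designate this coefficient as the parameter: set $t := \alpha_j$.

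Next I would handle the remaining coefficients case-by-case according to whether they are constant or nonconstant. If $\alpha_i \in F$, then trivially $\alpha_i = 0 \cdot t + \alpha_i$, which is of the required form $a_i t + b_i$ with $a_i = 0$ and $b_i = \alpha_i \in F$. If $\alpha_i$ is nonconstant, then Lemma \ref{gens1} applied to the pair $\alpha_i, \alpha_j = t$ yields scalars $a_i, b_i \in F$ such that $\alpha_i = a_i t + b_i$, which is again of the required form.

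Combining these, every coefficient of $P_G(T)$ lies in the $F$-affine span of $\{1, t\}$, so
\[
P_G(T) = \sum_{i=0}^{m} (a_i t + b_i) T^i,
\]
which by definition exhibits $P_G(T)$ as a member of a linear $1$-parameter family of polynomials in $F(t)[T]$. There is no real obstacle here: all the content has already been absorbed into Lemma \ref{gens1}, and the only thing to verify is the existence of a nonconstant coefficient to play the role of $t$, which is immediate from the fact that $x$ is transcendental over $F$.
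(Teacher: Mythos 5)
Your proof is correct and follows essentially the same route as the paper: choose one nonconstant coefficient as the parameter $t$ and invoke Lemma \ref{gens1} for the rest. The extra details you supply (existence of a nonconstant coefficient, and the trivial handling of constant coefficients) are already noted elsewhere in the paper and do not change the argument.
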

  
  \begin{proof}
 If we let $t\in F(x)$ be one non-constant coefficient in the orbit polynomial,
 then Lemma \ref{gens1} says that
 all other  coefficients have the form $\lambda t + \mu$ where $\lambda, \mu \in F$.
 \end{proof}
 
 In a sense, the orbit polynomial itself belongs to a family of size 1. 
 The important consequence will be proved soon, that various different specializations of the orbit
 polynomial will give rise to irreducible polynomials that all belong to a linear 1-parameter family.

 \subsection{Riemann-Hurwitz formula}\label{rhf}
 
 Recall that  $G$ is a finite subgroup of $PGL(2,F)$, where $F$ is a field. 
 Let $\Phi(x)=f(x)/g(x)$ be a generator of $F(x)^G$, the fixed field of $G$ acting on the function field $F(x)$.
 
 Note that $\Phi(x)$ is surjective when $F$ is algebraically closed, because
 the polynomial $f(x)-\lambda g(x)$ will always have roots in $F$ so the equation
 $\Phi(x)=\lambda$ has solutions.
 In fact the equation $\Phi(x)=\lambda$ has $|G|$ distinct solutions for all but a finite number
 of values of $\lambda$. 
 
 One can view the setup here in terms of a group $G$ acting on a variety $X$.
 View $X=\mathbb{P}^1 (\overline{F})$ as a genus 0 curve
 and the quotient curve $X/G$ is also of genus 0.
 The separable covering map $\Phi : X \longrightarrow X/G$ is given 
 explicitly by $\alpha \mapsto \Phi(\alpha)$.
 The  values of $\lambda$ where $\Phi(x)=\lambda$ has fewer than $|G|$ distinct solutions
 are the branch points of $\Phi$, and the solutions $x$ are the ramification points of $\Phi$. 
 The Riemann-Hurwitz formula implies
 \[
 2|G|-2\ge \sum_{P \in \mathbb{P}^1 (\overline{F})} (e_P-1)
 \]
 where $e_P$ is the ramification index at $P$, which is the order of the point stabilizer at $P$.
 Equality holds if and only if $F$ has characteristic 0
 or all the $e_P$ are not divisible by the characteristic of $F$ when $char(F)>0$.
 
 {\bf Example:} Let $F$ be an algebraically closed field and let $G$ be a subgroup
 of $PGL(2,F)$ isomorphic to $A_5$.
  If the characteristic of $F$ is not 2, 3, or 5,
 then by arguments similar to Lemma  \ref{three_non_regular_orbits}
 it can be shown that the non-regular orbits have sizes 12, 20, and 30.
  Then the point stabilizers at those points
 have orders 5,3,2 respectively, and 
 \[
  \sum_{P \in \mathbb{P}^1 (\overline{F})} (e_P-1)=12\cdot 4 + 20\cdot 2 + 30\cdot 1 =118
 \]
 which is equal to $2|G|-2$.

 The more general Riemann-Hurwitz formula (for genus 0) in characteristic $p$, which allows the possibility that
  ramification indices are divisible by $p$, states
 \[
 2|G|-2= \sum_{P \in \mathbb{P}^1 (\overline{F})} \delta_P
 \]
 where $\delta_P$ is the different at $P$.
 If $e_P$ is not divisible by $p$ (tame ramification) then $\delta_P=e_P-1$, and 
 if $e_P$ is divisible by $p$ (wild ramification)  then  
 $\delta_P=e_P+q_P-2$ where $q_P$ is the $p$-part of $e_P$.
 
 {\bf Example:} Let $F$ be an algebraically closed field and let $G$ be a subgroup
 of $PGL(2,F)$ isomorphic to $A_5$.
  If the characteristic of $F$ is 2, then 
  by arguments similar to Lemma  \ref{three_non_regular_orbits}
  it can be shown that there are two non-regular orbits of sizes 5 and 12.
  At a point $P$ in the orbit of size 12, the ramification index is $e_P=5$ so this ramification is tame,
  and $\delta_P=4$.
  At a point $P$ in the orbit of size 5, the ramification index is $e_P=12$ so this ramification is wild,
  and $\delta_P=e_P+q_P-2=12+4-2=14$. Then, to verify the Riemann-Hurwitz formula,
   \[
  \sum_{P \in \mathbb{P}^1 (\overline{F})} \delta_P=12\cdot 4 + 5\cdot 14 =118=2|G|-2.
 \]
 If one did not know $\delta_P$ at wild ramification points, it could be found using this
 formula and the orbit sizes.

 \subsection{Rest of Paper}
 
  For the rest of this paper, we will assume $F=\F_q$.
 We will let $G$ be a subgroup of $PGL(2,q)$,
  and  $\Phi(x)=f(x)/g(x)$ will be a generator for  $\F_q (x)^G$.

\section{Polynomials of the form $cx^{q+1}+dx^q-ax-b$}\label{xqplus1}

We recall that $F_\alpha (T)=f(T) - \Phi(\alpha) g(T)\in  \overline{\F_q} [T]$,  for any $\alpha \in \overline{\F_q}$,
and observe that $\alpha$ will be a root of  $F_\alpha(T)$.
We will be mostly interested in the case that $\Phi(\alpha)$ is in $\F_q$ and $\alpha \notin \F_{q}$.

We will first interpret some facts about a generator for the invariant functions 
of a subgroup of  $PGL(2,q)$ in terms of polynomials
of the form $cx^{q+1}+dx^q-ax-b$.

\begin{lemma}\label{main3a}
Let $G$ be  a subgroup of $PGL(2,q)$.
 Let $\Phi(x)$ be a generator for  $\F_q (x)^G$.
  Let $\alpha \in \overline{\F_q}$, and assume  $\alpha \notin \F_{q}$.
Then
$\Phi(\alpha)$ is in $\F_q$ if and only if there exists $s(x)=(ax+b)/(cx+d)\in G$
such that $\alpha$ is a root of $cx^{q+1}+dx^q-ax-b$, or equivalently, $s(\alpha)=\alpha^q$.
\end{lemma}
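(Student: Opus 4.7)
The plan is to split the biconditional into three easy pieces and use the $G$-invariance of $\Phi$ together with the fact that $\Phi \in \F_q(x)$, so the Frobenius endomorphism $\alpha \mapsto \alpha^q$ commutes with $\Phi$ in the sense that $\Phi(\alpha^q)=\Phi(\alpha)^q$.

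First I would dispense with the algebraic rearrangement. If $s(x) = (ax+b)/(cx+d)$, then $s(\alpha)=\alpha^q$ is just a restatement of $c\alpha^{q+1}+d\alpha^q - a\alpha - b = 0$, provided we can clear the denominator. Since $\alpha \in \overline{\F_q}\setminus \F_q$ we have $\alpha \neq \infty$, so $\alpha^q$ is finite, hence $c\alpha+d \neq 0$, and the equivalence is a one-line manipulation. So it suffices to prove $\Phi(\alpha) \in \F_q$ iff $s(\alpha)=\alpha^q$ for some $s\in G$.

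For the easier $(\Leftarrow)$ direction, suppose $s(\alpha)=\alpha^q$ for some $s \in G$. Writing $\Phi(x)=f(x)/g(x)$ with $f,g\in \F_q[x]$, the coefficients of $f$ and $g$ are fixed by the $q$-th power map, so $\Phi(\alpha^q)=f(\alpha^q)/g(\alpha^q)=f(\alpha)^q/g(\alpha)^q=\Phi(\alpha)^q$. On the other hand, $\Phi$ is $G$-invariant, so $\Phi(\alpha)=\Phi(s(\alpha))=\Phi(\alpha^q)$. Combining these gives $\Phi(\alpha)=\Phi(\alpha)^q$, hence $\Phi(\alpha)\in\F_q$.

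For the $(\Rightarrow)$ direction, assume $\Phi(\alpha)\in\F_q$. Applying the same Frobenius identity, $\Phi(\alpha^q)=\Phi(\alpha)^q=\Phi(\alpha)$. Now I would invoke Lemma \ref{Phiorbit} (whose proof via Lemma \ref{orbitpoly1} works verbatim for points of $\mathbb{P}^1(\overline{\F_q})$) to conclude that $\alpha$ and $\alpha^q$ lie in the same $G$-orbit. Hence there exists $s \in G$ with $s(\alpha)=\alpha^q$, which completes the proof. No real obstacle presents itself here: the key input is that $\Phi$ has $\F_q$-coefficients, making the Frobenius action commute with the $PGL(2,q)$-action in exactly the right way, and the rest follows from the orbit description of level sets of $\Phi$ already established.
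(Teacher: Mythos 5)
Your proof is correct and follows essentially the same route as the paper: the converse direction via $G$-invariance plus $\Phi(\alpha^q)=\Phi(\alpha)^q$, and the forward direction via Lemma \ref{Phiorbit} to place $\alpha$ and $\alpha^q$ in the same $G$-orbit. Your extra care about clearing the denominator $c\alpha+d$ and about extending Lemma \ref{Phiorbit} to points of $\mathbb{P}^1(\overline{\F_q})$ is welcome but not a different argument.
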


\begin{proof}
 Suppose  that $\Phi(\alpha)$ is in $\F_q$. 
Then $\Phi(\alpha)=\Phi(\alpha)^q=\Phi(\alpha^q)$,
and so $\alpha$ and $\alpha^q$ are in the same $G$-orbit by Lemma \ref{Phiorbit}. 
Therefore there exists  $s\in  G$ with $s(\alpha)=\alpha^q$. 
If $s(x)=(ax+b)/(cx+d)$, we get that $\alpha$ is a root of
$cx^{q+1}+dx^q-ax-b$.

Conversely, 
let $s(x)=(ax+b)/(cx+d)$ be in $G$, and let
  $\alpha \in \overline{\F_q}$ be a root of $cx^{q+1}+dx^q-ax-b$.
Then  $\alpha^q=s(\alpha)$.
Also  $\Phi(\alpha)=\Phi(s(\alpha))$ because $\Phi$ is $G$-invariant, so
$\Phi(\alpha)=\Phi(s(\alpha))=\Phi(\alpha^q)=\Phi(\alpha)^q$
and so $\Phi(\alpha)$  is in $\F_q$. 
\end{proof}

At this point we wish to note one consequence of this Lemma for the group $PGL(2,q)$.

\begin{cor}\label{fq23}
Let $G=PGL(2,q)$ and let $\Phi(x)$ be a generator for  $\F_q (x)^G$.
If $\alpha \in \F_{q^2}\setminus \F_q$ or $\alpha \in \F_{q^3}\setminus \F_q$
then $\Phi(\alpha)\in \F_q$.
\end{cor}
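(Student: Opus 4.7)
The plan is to reduce the corollary to an existence statement about an element of $PGL(2,q)$ sending $\alpha$ to $\alpha^q$, and then to invoke the transitivity facts recorded in the remark after Lemma \ref{two_non_regular_orbits}.

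First, I would apply Lemma \ref{main3a} to the group $G=PGL(2,q)$. Since $\alpha \notin \F_q$ in both cases, the lemma says that $\Phi(\alpha)\in\F_q$ if and only if there is some $s\in PGL(2,q)$ with $s(\alpha)=\alpha^q$. So the task becomes: produce such an $s$ when $\alpha\in\F_{q^2}\setminus\F_q$ and when $\alpha\in\F_{q^3}\setminus\F_q$.

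Next I would note that the Frobenius map $x\mapsto x^q$ is a field automorphism of every $\F_{q^n}$ whose fixed field is $\F_q$. Consequently, when $\alpha\in\F_{q^2}\setminus\F_q$ its image $\alpha^q$ lies in the same set $\F_{q^2}\setminus\F_q$, and similarly when $\alpha\in\F_{q^3}\setminus\F_q$ we have $\alpha^q\in\F_{q^3}\setminus\F_q$ (otherwise $\alpha^q\in\F_q$ would force $\alpha^{q^2}=\alpha^q$, and applying the Frobenius inverse on $\F_{q^3}$ would yield $\alpha\in\F_q$). Viewing $\mathbb{P}^1(\F_{q^n})$ through the identification with $\F_{q^n}\cup\{\infty\}$, both $\alpha$ and $\alpha^q$ lie in $\mathbb{P}^1(\F_{q^2})\setminus \mathbb{P}^1(\F_q)$ in the first case and in $\mathbb{P}^1(\F_{q^3})\setminus \mathbb{P}^1(\F_q)$ in the second.

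Now I would invoke the remark following Lemma \ref{two_non_regular_orbits}: for $G=PGL(2,q)$, the set $\mathbb{P}^1(\F_{q^2})\setminus \mathbb{P}^1(\F_q)$ is a single (non-regular) $G$-orbit, and the set $\mathbb{P}^1(\F_{q^3})\setminus \mathbb{P}^1(\F_q)$ is a single regular $G$-orbit. In either case $PGL(2,q)$ acts transitively on the relevant set, so there exists $s\in PGL(2,q)$ with $s(\alpha)=\alpha^q$. Applying Lemma \ref{main3a} in the reverse direction concludes that $\Phi(\alpha)\in\F_q$.

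There is no real obstacle here: the work is already packaged in the transitivity remark after Lemma \ref{two_non_regular_orbits}, and Lemma \ref{main3a} converts the conclusion $\Phi(\alpha)\in\F_q$ into precisely a statement about the Frobenius image lying in the same $G$-orbit. The only point requiring a line of care is verifying that $\alpha^q$ stays in the prescribed Frobenius-stable subset, which is immediate from $\alpha\notin\F_q$.
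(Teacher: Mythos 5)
Your proposal is correct and follows essentially the same route as the paper: reduce via Lemma \ref{main3a} to finding $s\in PGL(2,q)$ with $s(\alpha)=\alpha^q$, then invoke the transitivity of $PGL(2,q)$ on $\mathbb{P}^1(\F_{q^2})\setminus\mathbb{P}^1(\F_q)$ and on $\mathbb{P}^1(\F_{q^3})\setminus\mathbb{P}^1(\F_q)$ from the remark after Lemma \ref{two_non_regular_orbits}. Your explicit check that $\alpha^q$ remains in the relevant Frobenius-stable subset is a small detail the paper leaves implicit, but otherwise the arguments coincide.
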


\begin{proof}
By the remarks at the end of Section \ref{nro},
the elements of $\F_{q^2}\setminus \F_q$ form one $G$-orbit.
If $\alpha \in \F_{q^2}\setminus \F_q$ this implies that there exists  $s\in  G$ with $s(\alpha)=\alpha^q$. 
The result follows from Lemma \ref{main3a}.

A similar argument holds if $\alpha \in \F_{q^3}\setminus \F_q$.
\end{proof}

The previous Lemma gives a characterization of the $\alpha$ with the property that
there exists an element of $G$ taking $\alpha$ to $\alpha^q$.
Here are some important consequences of these conditions.
This theorem is at the heart of the results in this paper.

\begin{thm}\label{main3b}
Let $G$ be  a subgroup of $PGL(2,q)$.
 Let $\Phi(x)$ be a generator for  $\F_q (x)^G$.
 Let $\alpha \in \overline{\F_q}$, with  $\alpha \notin \F_{q}$ and $\Phi(\alpha)\in \F_q$.
 Let  $s(x)=(ax+b)/(cx+d)\in G$ have the property $s(\alpha)=\alpha^q$
 (which exists by Lemma \ref{main3a}). Then
 \begin{enumerate}
\item the degree of the minimal polynomial of $\alpha$
is equal to the order of $s$ in $G$.
\item the minimal polynomial of $\alpha$ is  equal to the orbit polynomial of
the cyclic subgroup generated by $s$  specialized with $x$ replaced by $\alpha$.
\end{enumerate}
\end{thm}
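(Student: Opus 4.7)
The plan is to prove part~(1) first, after which part~(2) follows by a direct computation of the specialized orbit polynomial. Write $H = \langle s\rangle$, let $r = |H|$, and let $k$ be the degree of the minimal polynomial $m_\alpha(T)$ of $\alpha$ over $\F_q$; the goal of part~(1) is to show $r = k$. Since $s(x) = (ax+b)/(cx+d)$ has coefficients in $\F_q$, the transformation $s$ commutes with the Frobenius $\varphi:y\mapsto y^q$ on $\overline{\F_q}$, and the hypothesis $s(\alpha) = \alpha^q$ then extends by induction to $s^i(\alpha) = \alpha^{q^i}$ for every $i \geq 0$. Taking $i = r$ gives $\alpha^{q^r} = \alpha$, so $\alpha \in \F_{q^r}$ and therefore $k \mid r$.

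For the reverse divisibility I will show $s^k = 1$ in $PGL(2,q)$. Using the same commutation, $s^k$ fixes each of the $k$ Galois conjugates $\alpha, \alpha^q, \ldots, \alpha^{q^{k-1}}$, because $s^k(\alpha^{q^i}) = \alpha^{q^{i+k}} = \alpha^{q^i}$. The case $k = 1$ is excluded by the hypothesis $\alpha \notin \F_q$; when $k \geq 3$, Lemma \ref{two_fixed} immediately forces $s^k = 1$, since a nontrivial element of $PGL(2,q)$ fixes at most two points of $\mathbb{P}^1(\overline{\F_q})$. The delicate case is $k = 2$: here $s$ interchanges $\alpha$ and $\alpha^q$ (because $s^2(\alpha) = \alpha^{q^2} = \alpha$), so any $GL(2,q)$-representative $A$ of $s$ swaps the one-dimensional $\overline{\F_q}$-subspaces $\langle(\alpha,1)\rangle$ and $\langle(\alpha^q,1)\rangle$, hence is anti-diagonal in that ordered basis. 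Therefore $\mathrm{tr}(A) = 0$, and Cayley--Hamilton gives $A^2 = -\det(A)\,I$, a scalar matrix, so $s^2 = 1$ in $PGL(2,q)$. In every admissible case $s^k = 1$, so $r \mid k$; combined with $k \mid r$ this yields $r = k$ and proves (1).

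With part~(1) in hand, the orbit polynomial of $H$ specialized at $x = \alpha$ evaluates to
\[
P_H(T)\big|_{x = \alpha} \;=\; \prod_{i=0}^{r-1}\bigl(T - s^i(\alpha)\bigr) \;=\; \prod_{i=0}^{r-1}\bigl(T - \alpha^{q^i}\bigr),
\]
and because $r = k$ the points $\alpha, \alpha^q, \ldots, \alpha^{q^{r-1}}$ are precisely the $k$ distinct Galois conjugates of $\alpha$. The right-hand side is therefore the minimal polynomial $m_\alpha(T)$, which proves~(2).

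I expect the main obstacle to be the exceptional case $k = 2$, where Lemma \ref{two_fixed} does not suffice on its own: an element of $PGL(2,q)$ with two fixed points need not have order two, so obtaining $s^2 = 1$ requires the additional structural input that $s$ itself (not merely $s^2$) swaps the two conjugates of $\alpha$, which forces zero trace. Everything else is either immediate from $s$ commuting with Frobenius or a direct consequence of Lemma \ref{two_fixed}.
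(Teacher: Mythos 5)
Your proof is correct and follows essentially the same route as the paper's. The core of part (1) is identical in both: $s$ commutes with Frobenius, so $s^i(\alpha)=\alpha^{q^i}$; one divisibility comes from $s$ having finite order (so $\alpha^{q^r}=\alpha$ and $k\mid r$), and the other from the fact that $s^k$ fixes all $k$ Galois conjugates of $\alpha$, which by Lemma \ref{two_fixed} forces $s^k=1$ as soon as $k\geq 3$. Part (2) is then the same one-line identification in both treatments. The only genuine divergence is the case $k=2$, which you rightly flag as the place where Lemma \ref{two_fixed} alone does not suffice. The paper disposes of it by invoking Lemma \ref{powersfix}: since $s^2$ fixes $\alpha$, either $s^2=1$ or $s$ itself fixes $\alpha$, and the latter would force $\alpha\in\F_q$. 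You instead observe that $s$ swaps the lines $\langle(\alpha,1)\rangle$ and $\langle(\alpha^q,1)\rangle$, so any matrix representative is anti-diagonal in that basis, has trace zero, and squares to a scalar by Cayley--Hamilton; this is a valid and correctly executed argument. The trade-off is minor: your version is self-contained linear algebra and avoids Lemma \ref{powersfix} (whose proof leans on the Sylow analysis of Lemma \ref{Sylow}), whereas the paper's version stays entirely inside the fixed-point framework it has already built and reuses elsewhere.
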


\begin{proof}
1. 
Let $\sigma$ be the Frobenius automorphism on $ \overline{\F_q}$, $\sigma(x)=x^q$.
 From Galois theory we know that $\sigma$ will permute the roots of any polynomial with coefficients in $\F_q$.
 In particular, the polynomial
 $cT^{q+1}+dT^q-aT-b$ has coefficients in $\F_q$ so $\sigma$ will permute the roots.
Since $\alpha$ is a root,  $\sigma^i(\alpha)=\alpha^{q^i}$
is a root for any $i$.

We are given  $s(\alpha)=\sigma(\alpha)$. 
Next observe that
  \[
 \sigma s(\alpha)=\sigma((a\alpha+b)/ (c\alpha+d))=(a\sigma(\alpha)+b)/(c\sigma(\alpha)+d)=
 s\sigma(\alpha).
 \]
 Therefore
$s^2(\alpha)=s\sigma(\alpha)=\sigma s(\alpha)=\sigma^2 (\alpha)$ 
and inductively we obtain
$s^i(\alpha)=\sigma^i(\alpha)$ for all $i\ge 1$.

Let $\alpha$ be in $\F_{q^r}$ and no smaller field, 
which implies that the minimal polynomial of  $\alpha$ is
$ \prod_{i=0}^{r-1} \bigl(T-\sigma^i(\alpha) \bigr)$.
Assume $r>2$ for the moment.
Then $s^r (\alpha)=\alpha^{q^r}=\alpha$, and $s^i (\alpha)\not= \alpha$ for $i<r$.
Since $s$ and $\sigma$ commute we have, for any $i\ge 1$,
$s^r(\sigma^i (\alpha))=\sigma^i (s^r (\alpha))=\sigma^i (\alpha)$
and thus $s^r$ fixes all roots of the minimal polynomial of $\alpha$.
So $s^r$ has more than two fixed points,
which is impossible for a non-identity element by Lemma  \ref{two_fixed}
unless $s^r=1$.
This proves that $s$ has order $r$ in $G$.

Assume now that $\alpha$ is in $\F_{q^2}$ but not $\F_q$. 
Then $\alpha^{q^2}=\alpha=s^2(\alpha)$. 
So $s^2$ fixes $\alpha$ and hence by Lemma \ref{powersfix}, unless $s^2=1$, $s$ also fixes $\alpha$.
But if $s$ fixes $\alpha$, we get $\alpha^q=\alpha$, and so $\alpha$ is in $\F_q$ contrary to hypothesis. 
Therefore $s$ has order 2, which is equal to the degree of
the minimal polynomial of $\alpha$.
 
 In all cases, we have shown that the order of $s$ is equal to the degree of the minimal polynomial of $\alpha$.

2. 
The minimal polynomial of $\alpha$ is $ \prod_{i=0}^{r-1} \bigl(T-\sigma^i(\alpha) \bigr)$.
This is equal to $\prod_{i=0}^{r-1} \bigl(T-s^i(\alpha) \bigr)$ by the proof of part 1.
By definition, this is the orbit polynomial of the cyclic subgroup generated by $s(x)$,
with $x$ replaced by $\alpha$.
\end{proof}

 \subsection{Subgroups of order dividing $q+1$}

We now investigate the consequences of the previous section for cyclic subgroups of order dividing $q+1$.
Part 1 has already been proved in \cite{ST}.

 \begin{cor}\label{main1}
 Let $G$ be a cyclic subgroup 
 of $PGL(2,q)$ of order $r>2$ generated by $s(x)=\frac{ax+b}{cx+d}$.
 Assume  $r$ divides $q+1$.
 Let $p_s(T):=cT^{q+1}+dT^q-aT-b \in \F_q [T]$.
 Then
\begin{enumerate}
\item The polynomial $p_s(T)$ factors into irreducible
factors of degree $r$. 
\item Each irreducible factor  is the minimal polynomial of a root $\alpha\in \overline{\F_q}$ of $p_s(T)$,
and is obtained from  the orbit polynomial $P_G(T)$  of $G$
by replacing the  indeterminate $x$ with  $\alpha$.
\item The irreducible factors are all from a linear 1-parameter family of polynomials of degree $r$ in $ \F_q [T]$.
  In particular, any coefficients that are constant in $P_G(T)$ will have
 that same constant value in every irreducible factor.
 \end{enumerate}
 \end{cor}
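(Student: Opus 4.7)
The plan is to apply Theorem \ref{main3b} to every root of $p_s(T)$, then use the structure result Lemma \ref{gens1} to get the linear 1-parameter family statement. First, I would verify that $c\neq 0$: otherwise $s(x)$ would have the form $(a/d)x+(b/d)$, fixing $\infty$, which contradicts Lemma \ref{nofixed} because $s$ is a non-identity element of a cyclic group of order $r>2$ dividing $q+1$. Hence $p_s(T)$ has degree $q+1$. Next, for any root $\alpha\in\overline{\F_q}$ of $p_s(T)$, rearranging $c\alpha^{q+1}+d\alpha^q=a\alpha+b$ as $(c\alpha+d)\alpha^q=a\alpha+b$ yields $\alpha^q=s(\alpha)$, provided $c\alpha+d\neq 0$; and the latter holds since $c\alpha+d=0$ together with the invertibility $ad-bc\neq 0$ forces $a\alpha+b\neq 0$, an impossibility.

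Now Lemma \ref{nofixed} rules out $\alpha\in\F_q$ (else $s(\alpha)=\alpha^q=\alpha$ would give a fixed point), so the hypotheses of Theorem \ref{main3b} hold. That theorem gives immediately that the minimal polynomial of $\alpha$ has degree equal to the order of $s$ (which is $r$, since $s$ generates $G$) and equals the orbit polynomial $P_G(T)$ with $x$ replaced by $\alpha$. This establishes part 2 and shows every irreducible factor of $p_s(T)$ has degree $r$. To finish part 1, I would show $p_s(T)$ is separable by computing $p_s'(T)=cT^q-a$ in characteristic $p$ (using $q\equiv 0\bmod p$): a common root $\alpha$ of $p_s$ and $p_s'$ satisfies $\alpha^q=a/c\in\F_q$, forcing $\alpha\in\F_q$ (Frobenius fixes only $\F_q$), contradicting the previous step. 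Thus $p_s(T)$ splits as a product of exactly $(q+1)/r$ distinct irreducible factors of degree $r$.

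For part 3, I would invoke Lemma \ref{gens1}: fixing one nonconstant coefficient $t(x)$ of $P_G(T)$, every other nonconstant coefficient equals $\lambda_i t(x)+\mu_i$ for some $\lambda_i,\mu_i\in\F_q$, while constant coefficients already lie in $\F_q$. Since $t(x)\in\F_q(x)^G=\F_q(\Phi)$ is a rational function in $\Phi$ with $\F_q$-coefficients and $\Phi(\alpha)\in\F_q$ for every root $\alpha$, the value $t(\alpha)$ is an element of $\F_q$. Specializing $x\mapsto\alpha$ in each coefficient produces $\lambda_i t(\alpha)+\mu_i$, and as $\alpha$ ranges over representatives of the $(q+1)/r$ different $G$-orbits among the roots (orbits separated by $t$ via Lemma \ref{Phiorbit} applied to the generator $t$), the parameter $t(\alpha)\in\F_q$ takes different values, producing a linear 1-parameter family in the sense of Section \ref{orbitp}. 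The final ``in particular'' clause is immediate since constant coefficients are unchanged under the specialization.

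The main obstacle, and the only calculation requiring any care, is the separability argument: without it one knows only that each root contributes a degree-$r$ factor, but not that $p_s(T)$ is exhausted by them. Once separability is in hand, everything else is a direct application of Theorem \ref{main3b} and Lemma \ref{gens1}.
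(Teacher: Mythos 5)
Your proposal is correct and follows essentially the same route as the paper: the paper's own proof simply applies Theorem \ref{main3b} to an arbitrary root of $p_s(T)$ (with the hypotheses $\Phi(\alpha)\in\F_q$ and $\alpha\notin\F_q$ supplied by Lemma \ref{main3a} and Lemma \ref{nofixed}) and then cites Corollary \ref{gens2} for the linear $1$-parameter family. Your additional checks --- that $c\neq 0$, that $c\alpha+d\neq 0$, and the derivative computation showing $p_s(T)$ is separable so that the degree-$r$ factors are distinct --- are details the paper leaves implicit, and they are all correct.
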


 \begin{proof}
 1. This follows from Theorem \ref{main3b} because $s$ has order $r$ (apply Theorem \ref{main3b} part 1 to any root of $p_s(T)$).
 
 2. This follows from Theorem \ref{main3b}.
 
 3. This follows from Corollary \ref{gens2} and  part 2.
 \end{proof}

     {\bf Example:} (from the introduction)
    Suppose $q\equiv 3\pmod{4}$, so that 4 divides $q+1$.
 Consider  $ s(x)=\frac{-x-1}{x-1}$, an element of $PGL(2,q)$ 
 which has order 4.
 The orbit polynomial of the subgroup generated by $s$ is
 \[
 T^4-\frac{x^4-6x^2+1}{x^3-x} \ T^3  - 6T^2  + \frac{x^4-6x^2+1}{x^3-x}\ T +1.
 \]
 It follows from Corollary \ref{main1} that
 all irreducible factors of $T^{q+1}-T^q+T+1\in \F_q [T]$ will have this form,
 when $x$ is replaced by a root of $T^{q+1}-T^q+T+1$.

 This explains the patterns in the $q=19$  example in the introduction.
 All factors belong to the linear 1-parameter family $T^4-tT^3-6T^2+tT+1$.

We remark that  this linear 1-parameter family appears over the rational numbers
in number theory.  They define extensions of $\mathbb{Q}$ known as the simplest quartic fields,
see \cite{F} for example.

 \bigskip

{\bf Example:} 
Let $q=17$, and let $G$ be the cyclic subgroup of order 3 generated by 
 $s(x)=(14x+13)/(6x+2)$.
 The orbit polynomial of $G$ is
 \[ P_G(T)=T^3 + \frac{16x^3 + 2x + 10}{x^2 + 15x + 3}T^2 + \frac{2x^3 + 15x^2 + 8}{x^2 +
    15x + 3}T + \frac{14x^3 + 7x^2 + 9x}{x^2 + 15x + 3}
    \]
 and we take the generator $\Phi(x)=  \frac{2x^3 + 15x^2 + 8}{x^2 +
    15x + 3}$.  Call this generator $t$. Then it is readily checked that
    \[
    P_G(T)=T^3 + (8t-1)T^2+tT+(7t+4).
    \]
  The degree $q+1$ polynomial corresponding to $s(x)$ is
 $6T^{18}+2T^{17}-14T-13$, which  
by Corollary \ref{main1} factorizes into six irreducible factors of degree three.
We used MAGMA \cite{Bosma}  for the factorizations.
 All the irreducible factors belong to the same linear 1-parameter family, where $t$ is obtained
 by replacing $x$ with a root $\alpha$ of the factor.
 The (nonconstant) factors are
 \begin{align*}
 &T^3 + 15T + 7\cr
 &T^3 + 3T^2 + 9T + 16\cr
 & T^3 + 4T^2 + 7T + 2\cr
    &T^3 + 6T^2 + 3T + 8\cr
  &  T^3 + 12T^2 + 8T + 9\cr
    &T^3 + 15T^2 + 2T + 1.
    \end{align*}
  Each factor belongs to the linear 1-parameter family
     $T^3 + (8t-1)T^2+tT+(7t+4)$.

     \bigskip

 Here is a corollary of Theorem \ref{main3b} and Corollary \ref{main1}.

 \begin{cor}\label{main2}
 In $PGL(2,q)$,  $s(x)=\frac{ax+b}{cx+d}$ is an element of order exactly $q+1$
 if and only if   $p_s(T):=cT^{q+1}+dT^q-aT-b \in \F_q [T]$ is irreducible over $\F_q$. 
 
   The orbit polynomial
   of the cyclic subgroup generated by $s$ becomes $p_s(T)$ when
   $x$ is replaced by any root of $p_s(T)$.
 \end{cor}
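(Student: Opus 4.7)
The plan is to deduce everything from Corollary \ref{main1}, Lemma \ref{main3a}, and Theorem \ref{main3b}. First I would dispose of the case $c = 0$: if $c = 0$ then $s$ is represented by an upper triangular matrix with diagonal entries $a, d \in \F_q$, so by the analysis in Section \ref{la} its order in $PGL(2,q)$ divides $q - 1$ or equals $p$, neither of which is $q + 1$; simultaneously, $\deg p_s(T) \leq q$, so $p_s$ cannot be irreducible of degree $q + 1$ as the converse would require. Both implications thus reduce to the case $c \neq 0$, where $\deg p_s = q + 1$.

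For the forward direction, suppose $s$ has order exactly $q + 1$. Since $q + 1 > 2$ trivially divides $q + 1$, Corollary \ref{main1} applies to $G = \langle s \rangle$ and produces a factorization of $p_s(T)$ into irreducible pieces each of degree $q + 1$. But $\deg p_s = q + 1$, so this factorization has a single factor, i.e.\ $p_s(T)$ itself is irreducible.

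For the converse, suppose $p_s(T)$ is irreducible of degree $q + 1$, and let $\alpha \in \overline{\F_q}$ be any root. Then $\alpha \notin \F_q$ and its minimal polynomial over $\F_q$ has degree $q + 1$. Take $G = \langle s \rangle$ and let $\Phi$ be a generator of $\F_q(x)^G$: by Lemma \ref{main3a} (applied to $s$ itself, which lies in $G$ and has $\alpha$ as a root of $p_s$), $\Phi(\alpha) \in \F_q$ and $s(\alpha) = \alpha^q$. Theorem \ref{main3b}(1) then forces the order of $s$ to equal the degree of the minimal polynomial of $\alpha$, namely $q + 1$.

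The second assertion is immediate from Theorem \ref{main3b}(2): the minimal polynomial of any root $\alpha$ of $p_s(T)$ equals the orbit polynomial of $\langle s \rangle$ with $x$ replaced by $\alpha$. Since $p_s(T)$ itself is irreducible of degree $q + 1$ with $\alpha$ as a root, it coincides (up to the leading scalar $c$) with this specialization, giving the claim. No step presents a real obstacle here; the only points that need care are ruling out the degenerate case $c = 0$ and keeping track of the scalar $c$ between $p_s(T)$ and the monic orbit polynomial.
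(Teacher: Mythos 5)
Your proof is correct and follows the same route the paper intends: the corollary is stated there with no argument beyond the remark that it follows from Theorem \ref{main3b} and Corollary \ref{main1}, and your deduction via Lemma \ref{main3a}, Theorem \ref{main3b} parts (1)--(2) and Corollary \ref{main1} is exactly that. Your extra care with the degenerate case $c=0$ (where, for instance, $T^q-T-b$ can be irreducible of degree $q$ over $\F_p$ while $x\mapsto x+b$ has order $p$, so the ``irreducible'' hypothesis really must be read as ``irreducible of degree $q+1$'') and with the leading scalar $c$ relating $p_s(T)$ to the monic orbit polynomial are refinements the paper glosses over, not a different approach.
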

 
 We will extend this Corollary  after some further lemmas in the next section.

 \subsection{On numerators and denominators for subgroups of order $q+1$}
 
 We present an application of the earlier lemmas about fixed points 
 that yields additional information about the numerators and denominators
 of elements of the cyclic subgroups of order dividing $q+1$.

 \begin{lemma}\label{fqdenom}
 Let $s(x)$ be an element of order $r$ in  $PGL(2,q)$,
 where $r>2$ and $r$ divides $q+1$.
For $i=1,2, \ldots ,r$  write
$s^i(x)=f_i(x)/g_i(x)$
where $f_i$ and $g_i$ are polynomials of degree at most 1.
Then no denominator polynomial
$g_i$ is a scalar multiple of $g_j$ if $i$ is different from $j$. 
The same holds for the numerators $f_i$.
\end{lemma}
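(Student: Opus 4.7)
The plan is to convert the polynomial proportionality hypothesis into the statement that some non-identity power of $s$ fixes $\infty$ or $0$ in $\mathbb{P}^1(\F_q)$, and then derive a contradiction via Lemma \ref{nofixed}. I would lift each $s^i$ to a matrix $M_i = \begin{pmatrix} a_i & b_i \\ c_i & d_i\end{pmatrix} \in GL(2,q)$, so that the coefficient vector of $f_i$ is the first row of $M_i$ and the coefficient vector of $g_i$ is the second row. Because $M_i$ is invertible, neither row is zero, so proportionality of the polynomials is equivalent to proportionality of the corresponding rows.

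For the denominator statement, a routine matrix calculation shows that the second rows of $M_i$ and $M_j$ are proportional precisely when $M_jM_i^{-1}$ is upper triangular, and an upper triangular matrix in $PGL(2,q)$ fixes the point $\infty \in \mathbb{P}^1(\F_q)$. Thus $g_i$ being a scalar multiple of $g_j$ would force $s^{j-i} = s^j(s^i)^{-1}$ to fix $\infty$. Since $1 \le i \neq j \le r$, the exponent $j-i$ is nonzero modulo $r$, so $s^{j-i}$ is a non-identity element of the cyclic group $G = \langle s\rangle$. This group satisfies the hypotheses of Lemma \ref{nofixed} (cyclic of order $r > 2$ with $r \mid q+1$), so no non-identity element has any fixed point in $\mathbb{P}^1(\F_q)$, yielding the required contradiction.

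For the numerator statement the argument is entirely symmetric: proportionality of the first rows of $M_i$ and $M_j$ corresponds to $M_jM_i^{-1}$ being lower triangular, and a lower triangular element of $PGL(2,q)$ fixes the point $0 \in \mathbb{P}^1(\F_q)$, which again contradicts Lemma \ref{nofixed}. I do not foresee any real obstacle; the only thing that needs care is that the ``special'' points produced by the two cases, namely $\infty$ and $0$, both lie in $\mathbb{P}^1(\F_q)$, so that Lemma \ref{nofixed} genuinely applies and rules them out as fixed points of $s^{j-i}$.
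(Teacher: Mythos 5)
Your proof is correct and follows essentially the same route as the paper's: both arguments reduce proportionality of the $g_i$ (resp.\ $f_i$) to the statement that the non-identity element $s^{j-i}$ has a fixed point in $\mathbb{P}^1(\F_q)$, and then invoke Lemma \ref{nofixed}. The only difference is cosmetic: you exhibit the fixed point as $\infty$ (resp.\ $0$) via the triangularity of $M_jM_i^{-1}$, whereas the paper solves directly for a point $z$ with $s^i(z)=s^j(z)$; your version cleanly avoids the small case split the paper needs when $a_i=a_j$.
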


\begin{proof}
Suppose the contrary. After multiplying by a scalar if necessary, we may assume that
$g_i(x)=g_j(x)=cx+d$.
Write $f_i(x)=a_ix+b_i$, $f_j(x)=a_jx+b_j$.

Then if we let $z=(b_j-b_i)/(a_i-a_j)$, we get $s^i(z)=s^j(z)$ and 
so $s^{i-j}$ has a fixed point on the projective line over $\F_q$.
(If $a_i=a_j$ and $b_i\not= b_j$ the fixed point will be $\infty$,
otherwise the fixed point is an element of $\F_q$.)
This contradicts Lemma  \ref{nofixed}.

The same argument works for the numerators.
\end{proof}

\begin{lemma}\label{fqdenom2}
 Let $s(x)$ be an element of order exactly $q+1$ in  $PGL(2,q)$.
For $i=1,2, \ldots , q+1$  write $s^i(x)=f_i(x)/g_i(x)$
where $f_i$ and $g_i$ are polynomials of degree at most 1.
Then $g(x)=\prod_{i=1}^r g_i(x)$ is a scalar multiple of $x^q-x$, 
and the same is true of the product of the $f_i$.
\end{lemma}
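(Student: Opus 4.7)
The plan is to use the regular action of $G=\langle s\rangle$ on $\mathbb{P}^1(\F_q)$ (Theorem \ref{allregtwofixed}) to identify the roots of the $g_i$'s and $f_i$'s explicitly as $G$-orbits of the points $\infty$ and $0$ respectively, and then count to see that these roots fill out $\F_q$.

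First I would handle the denominators. Writing $s^i(x)=(a_ix+b_i)/(c_ix+d_i)$, the polynomial $g_i(x)=c_ix+d_i$ has degree $1$ when $c_i\neq 0$ and is a nonzero constant when $c_i=0$. The condition $c_i=0$ is exactly the condition that $s^i$ fixes $\infty$. Since $s$ has order $q+1>2$ with $(q+1)\mid(q+1)$, Lemma \ref{nofixed} says no nonidentity power of $s$ fixes any point of $\mathbb{P}^1(\F_q)$, so $c_i=0$ occurs only for $i=q+1$ (when $s^{q+1}=1$). Thus $g_{q+1}$ is a nonzero constant and each $g_i$ for $1\le i\le q$ is a linear polynomial whose unique root is $-d_i/c_i=s^{-i}(\infty)$. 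By Theorem \ref{allregtwofixed}, $G$ acts regularly on $\mathbb{P}^1(\F_q)$, so the $q$ elements $\{s^{-i}(\infty):1\le i\le q\}$ are exactly $\mathbb{P}^1(\F_q)\setminus\{\infty\}=\F_q$. Hence the roots of $\prod_{i=1}^{q+1}g_i(x)$ are precisely the $q$ elements of $\F_q$, each with multiplicity one (consistent with Lemma \ref{fqdenom}, which guarantees that no two $g_i$ share a root), so the product is a scalar multiple of $\prod_{\alpha\in\F_q}(x-\alpha)=x^q-x$.

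The numerators are handled by the same strategy, but the point whose orbit we track is $0$ rather than $\infty$. Now $f_i(x)=a_ix+b_i$ has degree $1$ unless $a_i=0$, and $a_i=0$ is equivalent to $s^i(\infty)=0$, i.e.\ $s^{-i}(0)=\infty$. Regularity of $G$ on $\mathbb{P}^1(\F_q)$ implies there is exactly one such $i\in\{1,\dots,q+1\}$, call it $i_0$; for this index $f_{i_0}$ is a nonzero constant. For all other $i$, $f_i$ is linear with unique root $-b_i/a_i=s^{-i}(0)$, and the $q$ resulting roots form the set $\mathbb{P}^1(\F_q)\setminus\{\infty\}=\F_q$ by regularity. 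Therefore $\prod_{i=1}^{q+1}f_i(x)$ is a scalar multiple of $x^q-x$.

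The only real subtlety is keeping track of which $g_i$ or $f_i$ collapses to a constant, which is why the setup requires Lemma \ref{nofixed} for the denominators and the explicit regular action for the numerators; once this bookkeeping is done, the identification of the roots with a full $G$-orbit on $\mathbb{P}^1(\F_q)$ makes the conclusion immediate.
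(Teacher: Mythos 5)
Your proof is correct, but it takes a genuinely different route from the paper's. The paper deduces the lemma directly from Lemma \ref{fqdenom}: the $q+1$ polynomials $g_i$ are pairwise non-proportional elements of the two-dimensional space of polynomials of degree at most one, so they represent each of the $q+1$ projective classes of such polynomials exactly once --- the constant class (coming from $s^{q+1}=1$) and the $q$ classes of $x-\alpha$ for $\alpha\in\F_q$ --- whence the product is a scalar multiple of $\prod_{\alpha\in\F_q}(x-\alpha)=x^q-x$, and likewise for the $f_i$. You instead identify the root of each non-constant $g_i$ explicitly as $s^{-i}(\infty)$ and of each non-constant $f_i$ as $s^{-i}(0)$, use Lemma \ref{nofixed} to pin down the unique constant factor in each product, and invoke the regularity of the $\langle s\rangle$-action on $\mathbb{P}^1(\F_q)$ (Theorem \ref{allregtwofixed}) to see that these roots sweep out exactly $\F_q$. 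Both arguments ultimately rest on Lemma \ref{nofixed} (the paper's via Lemma \ref{fqdenom}, yours via Theorem \ref{allregtwofixed}), and both are complete. The paper's pigeonhole argument is shorter once Lemma \ref{fqdenom} is in hand; yours buys strictly more information, namely the explicit identification of the roots of $g$ (together with $\infty$) and of $f$ as full $G$-orbits on $\mathbb{P}^1(\F_q)$, which is essentially the content the paper only establishes later, in Theorem \ref{fqdenom3}.
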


\begin{proof} 
By Lemma \ref{fqdenom}
there are $q+1$ linearly independent polynomials $g_i(x)$, and one of them is constant.
Taking out the leading coefficients, we must have 
$$\prod_{i=1}^{q+1} g_i(x)=c \prod_{\alpha \in \F_q} (x-\alpha)=c(x^q-x)$$
where $c\in \F_q$. The same argument works for the numerators.
\end{proof}

Remark: Lemma  \ref{fqdenom2} implies that, for a cyclic subgroup of order $q+1$,
the numerator of a generator  of the invariant function field will never have any roots in $\F_q$.

In the case of a cyclic subgroup of order exactly $q+1$, Lemma \ref{fqdenom2} and 
Theorem \ref{allregtwofixed}
say that the roots of $g(x)$ together with $\infty$ is exactly $\mathbb{P}^1(\F_q)$,
and this is one regular orbit for the subgroup. 
We now generalize this to divisors of $q+1$.

 \begin{thm}\label{fqdenom3}
 Let $s(x)$ be an element of order $r$ in  $PGL(2,q)$,
 where $r>2$ and $r$ divides $q+1$.
 Let $G$ be the cyclic subgroup generated by $s(x)$.
For $i=1,2, \ldots ,r$  write
$s^i(x)=f_i(x)/g_i(x)$
where $f_i$ and $g_i$ are polynomials of degree at most 1.
Let $g(x)=\prod_{i=1}^r g_i(x)$. Then  the roots of $g(x)$ together with $\infty$
form a regular $G$-orbit.
 \end{thm}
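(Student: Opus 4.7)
The plan is to translate the algebraic condition ``$\alpha$ is a root of $g_i$'' into the geometric condition ``$s^i(\alpha)=\infty$'', and then deduce the orbit structure from Lemma~\ref{nofixed} and Theorem~\ref{allregtwofixed}.

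First I would observe that if $s^i(x)=f_i(x)/g_i(x)$ is written in lowest terms with $g_i$ nonconstant, then for $\alpha\in\F_q$, $g_i(\alpha)=0$ if and only if $s^i(\alpha)=\infty$, i.e.\ $\alpha=s^{-i}(\infty)$. Conversely, if $g_i$ is constant, then $s^i$ fixes $\infty$. Since by Lemma~\ref{nofixed} no non-identity element of $G$ has any fixed point in $\mathbb{P}^1(\F_q)$, for $1\le i\le r-1$ the denominator $g_i$ must be genuinely linear, and its unique root $s^{-i}(\infty)$ lies in $\F_q$ (and differs from $\infty$). For $i=r$, $s^r$ is the identity and $g_r$ is constant, contributing no root.

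Next I would establish distinctness of the roots as $i$ ranges over $1,\dots,r-1$. If $s^{-i}(\infty)=s^{-j}(\infty)$ for some $1\le i,j\le r-1$, then $s^{j-i}$ fixes $\infty$, which by Lemma~\ref{nofixed} forces $i\equiv j\pmod{r}$. Hence $g(x)=\prod_{i=1}^r g_i(x)$ is, up to a nonzero scalar, a polynomial of degree $r-1$ whose root set is exactly
\[
\{s^{-i}(\infty): 1\le i\le r-1\}\subseteq \F_q.
\]

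Adjoining $\infty=s^0(\infty)$, the set ``roots of $g(x)$ together with $\infty$'' equals
\[
\{s^{-i}(\infty): 0\le i\le r-1\},
\]
which, since $i\mapsto s^{-i}$ is a bijection of $G$ onto itself, is precisely the $G$-orbit of $\infty$ in $\mathbb{P}^1(\F_q)$. By Theorem~\ref{allregtwofixed}, $G$ acts regularly on $\mathbb{P}^1(\F_q)$, so this orbit has size $r=|G|$ and is a regular $G$-orbit, completing the proof.

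There is no real obstacle here; the only subtle point is the bookkeeping that excludes the case $g_i$ being constant for $1\le i\le r-1$ and confirms that all the roots are distinct, both of which are immediate consequences of Lemma~\ref{nofixed}. The degree count ($\deg g=r-1$) matches the orbit size minus one, which is the consistency check that makes the argument transparent.
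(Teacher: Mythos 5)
Your proof is correct, but it takes a genuinely different route from the paper's. You identify the root set of $g$ explicitly: the roots of $g_i$ are exactly the preimages $s^{-i}(\infty)$, so the roots of $g$ together with $\infty$ are literally the $G$-orbit $\{s^{-i}(\infty):0\le i\le r-1\}$ of $\infty$, with nonconstancy of $g_i$ ($1\le i\le r-1$) and distinctness of the roots both falling out of Lemma~\ref{nofixed} (no non-identity element of $G$ fixes a point of $\mathbb{P}^1(\F_q)$). The paper instead gets distinctness of the roots from Lemma~\ref{fqdenom} and then argues indirectly: it observes that $f(x)/g(x)$ is the $G$-invariant norm term of the orbit polynomial, deduces that the set of $\beta\in\F_q$ that are \emph{not} roots of $g$ is $G$-stable, hence that the complementary set (roots of $g$ plus $\infty$) is a union of $G$-orbits, and finally uses the size count $r-1+1=r=|G|$ together with regularity from Theorem~\ref{allregtwofixed} to conclude it is a single orbit. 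Your version buys a sharper statement -- it tells you \emph{which} orbit the roots form, namely the orbit of $\infty$, and pinpoints each root as $s^{-i}(\infty)$ -- and it avoids invoking the invariance of the norm term; the paper's version has the advantage of reusing machinery (Lemma~\ref{fqdenom}, the orbit polynomial) already developed for neighbouring results. Both arguments ultimately rest on Lemma~\ref{nofixed} and Theorem~\ref{allregtwofixed}, and both are complete.
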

 
 \begin{proof}
 By Lemma  \ref{fqdenom} no denominator polynomial
$g_i$ is a scalar multiple of $g_j$ if $i$ is different from $j$. 
Recall that $f_r(x)=x$ and $g_r(x)=1$.
Therefore the other denominators $g_i(x)$ for $1\le i \le r-1$ have degree exactly 1.

 Let $f(x)=\prod_{i=1}^r f_i(x)$ and $g(x)=\prod_{i=1}^r g_i(x)$.
 Then $f(x)$ has degree $r$, and $g(x)$ has degree $r-1$ with all roots distinct and in $\F_q$.
 Also $f(x)/g(x)$ is invariant under $G$; 
 it is the constant (norm) term in the orbit polynomial of $G$.
 
 Recall from Theorem \ref{allregtwofixed} that $G$ acts regularly on $\mathbb{P}^1(\F_q)$.
 Thus the $q+1$ elements of $\mathbb{P}^1(\F_q)$ will break into $(q+1)/r$ orbits of size $r$.
 We will show that the $r-1$ roots of $g(x)$ together with $\infty$ form one orbit.
  It  suffices to show that if $\beta\in \F_q$ is not a root of $g$, then $s^i(\beta)$ is also not a root for any $i$.

 Suppose $\beta\in \F_q$ is not a root of $g(x)$.
 Then $f(\beta)/g(\beta)$ is a well defined element of $\F_q$. 
 Since $f(x)/g(x)$ is invariant under $G$,  for any $s^i$ in $G$
\[
f(s^i(\beta))/g(s^i(\beta))=f(\beta)/g(\beta)
\]
is the same element of $\F_q$. 
Thus $s^i(\beta)$ is not a root of $g(x)$ and the proof is complete.
\end{proof}
 
 Let $G$ be a cyclic subgroup of $PGL(2,q)$ of order dividing $q+1$.
Let $\Phi(x)=f(x)/g(x)$ be a generator for $\F_q(x)^G$, the field of $G$-invariant rational functions.
By Lemma \ref{Phiorbit} we know that $\Phi$ is constant on orbits of $G$.
The previous results identify precisely 
the orbit of elements $x$ such that $\Phi(x)=\infty$.

 \subsection{Subgroups of order $q+1$ and the Orbit Polynomial}
 
 We now apply the lemmas just proved to coefficients in the orbit polynomial.
 
  \begin{thm}\label{orderq1}
  Suppose $s(x)=\frac{ax+b}{cx+d}$ is an element in $PGL(2,q)$ of order exactly $q+1$.   
  Let $G$ be the cyclic subgroup generated by $s(x)$.
 Suppose the coefficient of $T^i$ in the orbit polynomial $P_G(T)$ is $f_i(x)/g(x)$,
 and is nonconstant.
  Then  there exist $\lambda_i, \mu_i \in \F_q$ such that
 $$f_i(x)=\mu_i (cx^{q+1}+dx^q-ax-b) +\lambda_i (x^q-x)$$
 \end{thm}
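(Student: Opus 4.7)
The plan is to exploit the irreducibility of $p_s(T)=cT^{q+1}+dT^q-aT-b$ together with the known form of $g(x)$, and then specialize the orbit polynomial at a root of $p_s(T)$ to extract the desired decomposition of $f_i(x)$ as an $\F_q$-linear combination of $p_s(x)$ and $x^q-x$.

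First I would collect the two key structural facts. By Corollary \ref{main2}, because $s$ has order exactly $q+1$, the polynomial $p_s(T)=cT^{q+1}+dT^q-aT-b$ is irreducible over $\F_q$ of degree $q+1$. By Lemma \ref{fqdenom2}, the common denominator satisfies $g(x)=\kappa(x^q-x)$ for some $\kappa\in\F_q^{*}$. Also, since the coefficient $f_i(x)/g(x)$ of $T^i$ is assumed nonconstant, Lemma \ref{gens0} tells us $\deg f_i(x)=q+1$.

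Now I would pick any root $\alpha\in\overline{\F_q}$ of $p_s(T)$ and specialize the orbit polynomial at $x=\alpha$. By Theorem \ref{main3b}(2), $P_G(T)|_{x=\alpha}$ is the minimal polynomial of $\alpha$ over $\F_q$. Since $p_s(T)$ is irreducible of degree $q+1$ with $\alpha$ as a root, the minimal polynomial is the monic form $c^{-1}p_s(T)=T^{q+1}+(d/c)T^q-(a/c)T-(b/c)$. Comparing the coefficient of $T^i$ on both sides yields
\[
\frac{f_i(\alpha)}{g(\alpha)}=c_i,
\]
where $c_i\in\F_q$ is the corresponding coefficient of $c^{-1}p_s(T)$ (so $c_i=0$ for $2\le i\le q-1$, and $c_i\in\{d/c,-a/c,-b/c\}$ for $i\in\{q,1,0\}$). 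Crucially this same value $c_i$ is obtained for every root of $p_s(T)$.

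The last step is a divisibility argument. The polynomial $h_i(x):=f_i(x)-c_i g(x)\in\F_q[x]$ vanishes at every root of $p_s(T)$, so $p_s(x)$ divides $h_i(x)$ in $\F_q[x]$. Since $\deg f_i=q+1$ and $\deg g=q$, we have $\deg h_i\le q+1=\deg p_s$, forcing $h_i(x)=\mu_i p_s(x)$ for some $\mu_i\in\F_q$. Rearranging gives
\[
f_i(x)=\mu_i\bigl(cx^{q+1}+dx^q-ax-b\bigr)+c_i\kappa\bigl(x^q-x\bigr),
\]
and setting $\lambda_i:=c_i\kappa$ yields the stated identity. The only genuine subtlety is the scalar $\kappa$ absorbed into $\lambda_i$; the rest is bookkeeping on degrees and a single application of the irreducibility of $p_s(T)$.
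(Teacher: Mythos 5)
Your proposal is correct and follows essentially the same route as the paper's proof: specialize the orbit polynomial at a root $\alpha$ of $p_s(T)$, use Lemma \ref{fqdenom2} to write $g(x)$ as a scalar multiple of $x^q-x$, observe that $f_i(\alpha)/g(\alpha)\in\F_q$, and conclude by a degree comparison that $f_i(T)-c_ig(T)$ is a scalar multiple of the irreducible polynomial $p_s(T)$. Your explicit identification of $c_i$ as a coefficient of $c^{-1}p_s(T)$ is a slightly more precise bookkeeping of the same argument.
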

 
 \begin{proof}
 By Lemma \ref{fqdenom2}, $g(x)=\lambda (x^q-x)$ for some $\lambda \in \F_q$.
 
 Let $\alpha$ be a root of $p_s(T):=cT^{q+1}+dT^q-aT-b \in \F_q [T]$.
Then   $f_i(\alpha)/g(\alpha)$ is in $\F_q$ by Corollary \ref{main2}, say $f_i(\alpha)/g(\alpha)=c_i$.
This implies that $\alpha$ is a root of 
$$f_i(T)-c_i g(T)=f_i(T)-c_i \lambda (T^q-T).$$
This is a polynomial of  degree $q+1$ with coefficients in $\F_q$.
However we already know by Corollary \ref{main2} that the minimal polynomial of $\alpha$ 
over $\F_q$ is $p_s(T)$, which also has degree $q+1$. Therefore
\[
f_i(T)-c_i \lambda (T^q-T)=\mu_i f_g (T)
\]
for some $\mu_i \in \F_q$, and the result follows letting $\lambda_i=c_i \lambda$.
\end{proof}

Note in the preceding proof that $f_i(T)- \lambda_i (T^q-T)$ is irreducible over $\F_q$.
It will happen that $f_i(T)- \lambda (T^q-T)$ is irreducible over $\F_q$ for some $\lambda \in \F_q$,
and not for others.
In the next Theorem we will count the number of $\lambda$ such that $f_i(T)- \lambda (T^q-T)$ is irreducible.

\begin{thm}\label{numberofr}
 Suppose $s$ defined by $s(x)=\frac{ax+b}{cx+d}$ is an element in $PGL(2,q)$ of order exactly $q+1$.   
  Let $G$ be the cyclic subgroup generated by $s$.
 Let $f(x)/g(x)$ be a nonconstant coefficient in the orbit polynomial $P_G(T)$.
 Then $f(x)-\lambda g(x)$ is irreducible over $\F_q$ for $\phi(q+1)$ different values 
 of $\lambda \in \F_q$, where $\phi$ is the Euler phi function.
 More generally, $f(x)-\lambda g(x)$ factors into irreducible factors of degree $r$ for $\phi(r)$
 values of $\lambda \in \F_q$, for each divisor $r>1$ of $q+1$.
\end{thm}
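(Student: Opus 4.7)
The plan is to set up a bijection $\lambda \leftrightarrow k$ between values $\lambda \in \F_q$ and nonidentity elements $s^k$ of $G$ (with $k \in \{1,\ldots,q\}$), such that $f(T)-\lambda g(T)$ factors into degree-$r$ irreducibles exactly when $s^k$ has order $r$. The desired count $\phi(r)$ will then follow from the fact that a cyclic group of order $q+1$ has $\phi(r)$ elements of order $r$ for each divisor $r$.

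First I would collect some structural consequences. By Lemma \ref{fqdenom2}, $g(x)$ is a scalar multiple of $x^q-x$, so $g$ has degree $q$ and vanishes on every element of $\F_q$; since $f$ and $g$ are coprime, $f$ has no root in $\F_q$. Consequently, for every $\lambda \in \F_q$, the polynomial $f(T)-\lambda g(T)$ has degree exactly $q+1$ and no root in $\F_q$. The only non-regular $G$-orbits on $\mathbb{P}^1(\overline{\F_q})$ are, by Theorem \ref{allregtwofixed}, the two fixed points of $s$; these lie in $\mathbb{F}_{q^2}\setminus \F_q$, and Frobenius swaps them (else one would lie in $\F_q$), so $\Phi$ takes values in $\mathbb{F}_{q^2}\setminus\F_q$ on them. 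Hence any root $\alpha$ of $f(T)-\lambda g(T)$ lies in a regular $G$-orbit, and by Lemma \ref{orbitpoly1} that orbit is exactly the full root set, of size $q+1$.

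To build the bijection, note that for each such $\lambda$, Lemma \ref{main3a} furnishes some $k \in \{1, \ldots, q\}$ with $s^k(\alpha) = \alpha^q$. This $k$ does not depend on which $\alpha$ in the orbit is chosen (since $s$ commutes with Frobenius), and it is unique modulo $q+1$ (since $G$ acts regularly on the orbit). For the inverse direction, fix any $k \in \{1,\ldots,q\}$ and consider $p_{s^k}(T) = c_kT^{q+1}+d_kT^q-a_kT-b_k$ where $s^k(x)=(a_kx+b_k)/(c_kx+d_k)$; by Lemma \ref{nofixed} applied to the cyclic group $G$ of order $q+1>2$, $s^k$ has no fixed point in $\mathbb{P}^1(\F_q)$, so in particular $c_k\neq 0$ and $p_{s^k}$ has degree $q+1$. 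Its root set is $s$-invariant (as $s$ is defined over $\F_q$, so Frobenius and $s$ commute on it), hence a union of $G$-orbits; having exactly $q+1$ elements none of which lies in $\F_q$, it is a single regular $G$-orbit on which Frobenius acts as $s^k$. Since $\Phi(\alpha)^q = \Phi(\alpha^q) = \Phi(\alpha)$, the value $\lambda' = \Phi(\alpha)$ lies in $\F_q$ and satisfies $k(\lambda')=k$. Both $\F_q$ and $\{1,\ldots,q\}$ have size $q$, so this is a bijection.

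Finally, by Theorem \ref{main3b} the irreducible factors of $f(T)-\lambda g(T)$ all have degree equal to the order of $s^{k(\lambda)}$ in $G$. Since $G$ is cyclic of order $q+1$, this order is $(q+1)/\gcd(q+1,k)$, which equals $r$ precisely when $k = m(q+1)/r$ for some $m \in \{1,\ldots,r-1\}$ with $\gcd(m,r)=1$; there are $\phi(r)$ such values (for $r > 1$), which gives the stated count, with $r = q+1$ recovering the $\phi(q+1)$ values yielding irreducibility. The main delicate step is establishing the bijection above, which requires combining regularity (for uniqueness of $k$) with the independent verification that each $p_{s^k}$ cuts out a single regular $G$-orbit (for surjectivity); once the bijection is in hand, the counting is routine.
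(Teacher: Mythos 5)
Your proof is correct, but it proceeds by a genuinely different mechanism than the paper's. The paper works explicitly with the shape of the polynomial: by Theorem \ref{orderq1} it writes $f(x)-\lambda g(x)$, up to a scalar, as $cx^{q+1}+(d-\lambda)x^q-(a-\lambda)x-b$, recognizes this as $p_{s_\lambda}$ for the transformation $s_\lambda(x)=\frac{(a-\lambda)x+b}{cx+(d-\lambda)}$ obtained by subtracting $\lambda$ from the diagonal of a pre-image matrix, and then proves the structural fact that $\{s_\lambda:\lambda\in\F_q\}\cup\{1\}=\langle s\rangle$ --- via eigenvalues $\alpha-\lambda,(\alpha-\lambda)^q$ to get the order dividing $q+1$, shared fixed points plus Lemma \ref{cyclic_stabilizer} to get cyclicity of $\langle s,s_\lambda\rangle$, and maximality of $q+1$ to force $s_\lambda\in\langle s\rangle$. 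You instead never invoke the explicit formula: you define $k(\lambda)$ abstractly as the power of $s$ realizing Frobenius on the (regular) root-orbit of $f-\lambda g$, prove surjectivity of $\lambda\mapsto k(\lambda)$ by producing $\lambda=\Phi(\alpha)$ from a root $\alpha$ of $p_{s^k}$, and conclude bijectivity by equal cardinalities; the two bijections coincide (your $s^{k(\lambda)}$ is the paper's $s_\lambda$), so you recover the paper's structural fact indirectly by counting rather than by exhibiting the elements. Both routes finish identically with the $\phi(r)$-count of elements of order $r$ in a cyclic group of order $q+1$. The paper's version buys the explicit and reusable description of $G\setminus\{1\}$ as a pencil of matrices $A-\lambda I$; yours buys independence from Theorem \ref{orderq1} and rests instead on the orbit machinery (Lemmas \ref{orbitpoly1}, \ref{main3a}, \ref{nofixed}, Theorems \ref{allregtwofixed} and \ref{main3b}). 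Two compressed steps in your write-up deserve a line each: that $\Phi$ sends the two $G$-fixed points outside $\F_q$ follows from Lemma \ref{Phiorbit} because they form two distinct singleton orbits swapped by Frobenius; and the distinctness of the $q+1$ roots of $p_{s^k}$, which you use implicitly, actually falls out of your own argument, since a nonempty union of regular orbits inside a root set of size at most $q+1$ must be a single orbit of exactly $q+1$ distinct points. Neither is a gap.
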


\begin{proof}
By Theorem \ref{orderq1} we may assume that
\begin{equation}\label{eqnorderq1}
f(x)-\lambda g(x)=cx^{q+1}+(d-\lambda) x^q-(a-\lambda)x-b
\end{equation}
up to multiplication by a scalar multiple from $\F_q$.
By Corollary \ref{main2}, we are given that $cx^{q+1}+d x^q-ax-b$ is irreducible and 
we count the number of $\lambda$ such that  \eqref{eqnorderq1} is irreducible.
This is equivalent to counting the number of $\lambda$ such that 
$s_\lambda(x):=\frac{(a-\lambda)x+b}{cx+(d-\lambda)}$ has order $q+1$,
given that $s(x)=\frac{ax+b}{cx+d}$ has order $q+1$.
As $\lambda$ ranges over $\F_q$ the elements $s_\lambda$ are all distinct and not the identity.
To complete the proof, we will show that  $s_\lambda$ is in $G$  for all $\lambda$,
where $G$ is the cyclic subgroup generated by $s=s_0$. 
In other words, we will show that 
\[
G=\langle s \rangle = \bigg\{ x\mapsto \frac{(a-\lambda)x+b}{cx+(d-\lambda)} : \lambda \in \F_q \biggr\} \cup 
 \bigg\{ x\mapsto x \biggr\}.
\]
Since $G$ has $\phi(r)$ elements of order exactly $r$, this is sufficient.

Since $s$ has order $q+1$ the matrix $A={a\ b\choose c\ d}$ in $GL(2,q)$ which is a pre-image of $s$
has an irreducible characteristic polynomial and two distinct eigenvalues in $\F_{q^2}$,
call them $\alpha$ and $\alpha^q$.
In general, the order of any such matrix is a divisor of $q^2-1$ that does not divide $q-1$.
In our case, we assume that $s$ has order exactly $q+1$ so $A$ has order $q^2-1$.
Note that  $\lambda$ cannot be an eigenvalue of $A$ because $\lambda \in \F_q$.

The matrix $A_\lambda ={a-\lambda \ \ \ b\choose c\ \ \  d-\lambda }$ in $GL(2,q)$ which is a pre-image of $s_\lambda$
has eigenvalues $\alpha-\lambda$ and $\alpha^q-\lambda=(\alpha-\lambda)^q$.
These are distinct, and not in $\F_q$, and so $A_\lambda$ has order dividing $q^2-1$ and not dividing $q-1$.
Thus  $s_\lambda$ has order dividing $q+1$.

A straightforward calculation shows that $s_\lambda$ has the same fixed points as $s$
(over $\overline{\F_q}$).
Then the subgroup $H$ generated by $s$ and $s_\lambda$ has a fixed point.
The order of $H$ is relatively prime to $q$ because
both generators have order that is relatively prime to $q$.
By Lemma \ref{cyclic_stabilizer}, $H$ is cyclic.
If $s_\lambda$ is not in the cyclic subgroup $G$ generated by $s$,
then the order of $H$ would be larger than $q+1$,
which is impossible because the largest cyclic subgroups of $PGL(2,q)$ have order $q+1$.
Therefore $s_\lambda$ is in $G$.
\end{proof}

{\bf Remark.} As a check, note that $\sum_{r|(q+1), r>1} \phi (r)=q$.

{\bf Remark.}  Lemma  \ref{fqdenom2} implies that
$f(x)-\lambda g(x)$ will never have a degree 1 factor over $\F_q$.

{\bf Example:} Let $q=7$, then the cyclic subgroup generated by 
$x\mapsto (3x-1)/(x+3)$
has order 8. A nonconstant coefficient in the orbit polynomial has numerator $f(x)=x^8+1$
and denominator $g(x)=x^7-x$. 
The factorizations of $x^8+1-\lambda (x^7-x)$ for $\lambda \in \F_7$ are into
four irreducible quadratics for $\phi (2)=1$ value of $\lambda$, 
two irreducible quartics for $\phi (4)=2$ values of $\lambda$, 
and remaining irreducible for $\phi (8)=4$ values of $\lambda$.
These correspond to the group having  one element of order 2,
two elements of order 4, and four elements of order 8.

\subsection{Order dividing $q-1$}

So far we have discussed the details in the case of an element
of $PGL(2,q)$ of order $r>2$ dividing $q+1$.
The order of an element could also be $p$, or it could divide $q-1$, 
as explained in Section \ref{la} and the proof of Lemma \ref{powersfix}.
Essentially the same result holds in these cases; we omit the details and 
provide a brief sketch here.

For elements $s(x)=\frac{ax+b}{cx+d}$ of order $r>2$ dividing $q-1$, the polynomial
$cx^{q+1}+dx^{q}-ax-b$ has two roots in $\F_q$ and 
behaves in exactly the same way as for elements of order dividing $q+1$
once these two linear factors are removed.
The fact that there are two roots in $\F_q$ is essentially because these elements
have two fixed points in $\mathbb{P}^1(\F_q)$.
Corollary \ref{main1} holds when the linear factors corresponding to roots are removed.

Elements of order $p$ have just one fixed point  so the polynomial
will have one root in $\F_q$ and 
Corollary \ref{main1} holds when the linear factor corresponding to the root is removed.

Elements of order 2 are slightly trickier to deal with, partly because 2 divides both $q-1$ and $q+1$
when $q$ is odd.
Elements of order 2 fall into two conjugacy classes when $q$ is odd, see Section 7 for more details.
One conjugacy class behaves like 
elements of order dividing $q+1$, these have fixed points
defined over $\F_{q^2}$. Corollary \ref{main1} holds here, and
the polynomial $cx^{q+1}+dx^{q}-ax-b$ factors into irreducible quadratics.
The other conjugacy class have fixed points in $\mathbb{P}^1(\F_q)$ and they behave like
elements of order dividing $q-1$. 
Corollary \ref{main1} holds when the linear factors corresponding to roots are removed.
For example, when $s(x)=-x$ the corresponding polynomial is 
$x^q+x$, which factors into irreducible quadratics and one linear factor.

\subsection{Polynomials $cx^{q^k+1}+dx^{q^k}-ax-b$}\label{xqplus2}

In \cite{ST} the factors of $f_k(T)=cT^{q^k+1}+dT^{q^k}-aT-b$ were studied,
where $a,b,c,d$ are in $\F_q$.
Our results so far have been about the $k=1$ case.
For the general case,
first consider factorizing $f_k$ over $\F_{q^k}$. 
 Suppose that $s(x)=\frac{ax+b}{cx+d}$ has order $r>2$ as an element of $PGL(2,q^k)$
 where $r$ divides $q^k+1$.
Then Corollary \ref{main1} will apply with $\F_{q^k}$ as the ground field. 
We conclude that the irreducible factors over $\F_{q^k}$ must be specializations of the orbit polynomial
of the cyclic subgroup generated by $s$.

Then consider factoring $f_k$ over over $\F_q$. 
Taking one irreducible factor over $\F_{q^k}$ and multiplying it by all its Galois conjugates
gives an irreducible factor over $\F_q$. 
(The Galois conjugates must also be factors because $f_k$ has coefficients in $\F_q$.)

We must show that all irreducible factors over $\F_q$ arise in this way.
If $h$ is any irreducible factor over $\F_q$, then consider factoring $h$ over $\F_{q^k}$.
The irreducible factors of $h$ over $\F_{q^k}$ must be specializations of the 
orbit polynomial (because they are also factors of $f_k$).
Thus $h$ is the product of one of these specializations and its Galois conjugates.

\section{Polynomials of degree $|G|$}\label{degorderG}

Let $G$ be a subgroup of $PGL(2,q)$, not necessarily cyclic.
We now continue our observations about polynomials
 $F_\alpha (T)=f(T) - \Phi(\alpha) g(T)$ where 
$\Phi(x)=f(x)/g(x)$ is a generator for $\F_q(x)^G$.
The generator is not unique.
Note once again that $\Phi(x)-\lambda$ is also a generator, for any $\lambda\in \F_q$,
so $f(x)-\lambda g(x)$ is also the numerator of a generator.
While the factorization pattern of $f(x)-\lambda g(x)$  into irreducible factors over $\F_q$
may vary as $\lambda$ varies in $\F_q$, as we showed in Theorem \ref{numberofr},
it is nevertheless true that the roots of each 
$f(x)-\lambda g(x)$ form one $G$-orbit, see Lemma  \ref{orbitpoly1}.
We will now obtain more precise information about the factorization.

  \begin{thm}\label{falpha1}
  Let $G$ be a subgroup of $PGL(2,q)$.
Let $\Phi(x)=f(x)/g(x)$ be a generator for $\F_q(x)^G$, the field of $G$-invariant rational functions.
Let $\alpha \in \overline{\F_q}$ have the property that $\Phi(\alpha)$ is in $\F_q$.
Assume that $G$ acts regularly on the roots of 
$F_\alpha (T):=f(T) - \Phi(\alpha) g(T)\in \F_q[T]$.
Then 
\begin{enumerate}
\item $F_\alpha (T)$ will factor into irreducible factors of the same degree over $\F_q$.
\item The  minimal polynomial of $\alpha$  is one of the factors.
\item The degree of each factor must be the order of an element of $G$, including the possibility of
the identity element, in which case the degree will be 1.
 \end{enumerate}
  \end{thm}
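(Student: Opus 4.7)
The plan is to combine Lemma \ref{orbitpoly1} with the commuting-actions principle of Lemma \ref{comm_action2}, taking the second commuting group to be the cyclic group generated by the Frobenius automorphism $\sigma\colon \overline{\F_q}\to \overline{\F_q}$, $\sigma(z)=z^q$.

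First I would invoke Lemma \ref{orbitpoly1}: since $G$ is assumed to act regularly on the root set $\Omega$ of $F_\alpha(T)$, we have
\[
F_\alpha(T)=\prod_{s\in G}\bigl(T-s(\alpha)\bigr),
\]
so $|\Omega|=|G|$ and $G$ acts transitively and regularly on $\Omega$. Because $\Phi(\alpha)\in \F_q$, the polynomial $F_\alpha(T)$ lies in $\F_q[T]$, and hence $\sigma$ permutes $\Omega$. The irreducible factors of $F_\alpha(T)$ over $\F_q$ correspond precisely to the $\langle\sigma\rangle$-orbits on $\Omega$, the factor associated to an orbit being $\prod_{\beta\in\text{orbit}}(T-\beta)$.

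Next I would verify that the $G$-action and the $\langle\sigma\rangle$-action on $\Omega$ commute. Any $s\in G$ has the form $s(x)=(ax+b)/(cx+d)$ with $a,b,c,d\in\F_q$, and since Frobenius fixes $a,b,c,d$, for every $\beta\in\Omega$,
\[
\sigma(s(\beta))=\frac{a\beta^q+b}{c\beta^q+d}=s(\sigma(\beta)).
\]
Therefore Lemma \ref{comm_action2} applies with $H=\langle\sigma\rangle$: $G$ transitively permutes the $\langle\sigma\rangle$-orbits on $\Omega$. Transitivity of the $G$-action on these orbits forces all orbits to have the same cardinality, which is exactly statement (1). Statement (2) is then immediate: the $\langle\sigma\rangle$-orbit through $\alpha$ contributes the irreducible factor $\prod_{i=0}^{r-1}(T-\sigma^i(\alpha))$, which is the minimal polynomial of $\alpha$.

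For (3), I would use the second half of Lemma \ref{comm_action2}: if $\Omega'$ is a $\langle\sigma\rangle$-orbit and $T_{\Omega'}\le G$ is its stabilizer under the $G$-action, then $T_{\Omega'}$ is isomorphic to the permutation group induced by $\langle\sigma\rangle$ on $\Omega'$ (which is the image of a cyclic group, hence cyclic of order equal to $|\Omega'|$, the degree of the corresponding irreducible factor). Thus the common degree of the irreducible factors equals the order of a generator of the cyclic subgroup $T_{\Omega'}\le G$; the degree is $1$ exactly when $T_{\Omega'}$ is trivial, i.e.\ when $\sigma$ acts trivially on $\Omega'$ and the factor is linear. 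I do not foresee a serious obstacle: the content is really the clean packaging of the commuting-actions lemma, and the only verification that requires attention is that Frobenius and elements of $PGL(2,q)$ commute as permutations of $\Omega$, which is transparent once one notes $a,b,c,d\in\F_q$.
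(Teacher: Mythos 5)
Your proof is correct and follows essentially the same route as the paper: both verify that the Frobenius action on the roots commutes with the $G$-action and then apply Lemma \ref{comm_action2} to conclude that all irreducible factors have the same degree, namely the size of a Frobenius orbit. The only (harmless) difference is that for part (3) you extract the cyclic orbit-stabilizer directly from the second conclusion of Lemma \ref{comm_action2}, whereas the paper cites Theorem \ref{main3b}; both yield that the common degree is the order of an element of $G$.
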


\begin{proof}
We are given $\alpha \in \overline{\F_q}$ such that $\Phi(\alpha)$ is in $\F_q$, and
we are assuming that $G$ permutes the roots of $F_\alpha(T)$ regularly.
Since $\alpha$ is a root of $F_\alpha (T)$,
 the minimal polynomial of $\alpha$  over $\F_q$ divides $F_\alpha (T)$.

If $\alpha\in \F_q$ then $s(\alpha)\in \F_q$ for all $s\in G$, 
and these are all the roots of $F_\alpha(T)$,
so $F_\alpha(T)$ splits completely
over $\F_q$ into linear factors. The theorem is proved in this case.

Assume now that  $\alpha\notin \F_q$.
 The Frobenius automorphism $\sigma$ acts (nontrivially)
 on the roots of $F_\alpha (T)$, and commutes with the $G$ action.
 To see this, let $\beta$ be a root of $F_\alpha(T)$, and let $s$ be an element of $G$, with
 \[
 s(\beta)=(a\beta+b)/(c\beta+d),
 \]
 where $a$, $b$, $c$ and $d$ are elements of $\F_q$ and $ad-bc\neq 0$. Then we find that
  \[
 \sigma s(\beta)=\sigma((a\beta+b)/ (c\beta+d))=(a\sigma(\beta)+b)/(c\sigma(\beta)+d)=
 s\sigma(\beta).
 \]
 We may now apply Lemma  \ref{comm_action2} and deduce that
 the irreducible factors all have the same degree, which is  the size of a Frobenius orbit.
 
 The rest follows from Theorem \ref{main3b}.
 \end{proof}

{\bf Remark.} The $\alpha$ having $\Phi(\alpha)\in \F_q$ are the roots of the irreducible 
factors of the polynomials $f(x)-\lambda g(x)$ that we discussed in Theorem \ref{numberofr}.

{\bf Remark.}  Similar results for polynomials of the form $f(T)-\alpha g(T)$ where $\alpha$ lies in a finite extension 
of $\F_q$, and the factorization is done over the extension field, may be found in \cite{PRW}, Corollary 3.3.

Over the field $\F_{q^3}$ things are especially nice:

 \begin{cor}\label{falpha2}
 Let $G=PGL(2,q)$ and let 
  $\Phi(x)=f(x)/g(x)$ be a generator for  $\F_q (x)^G$.
Then  $\Phi(\alpha)$ is in $\F_q$ for all $\alpha \in \F_{q^3}\setminus \F_q$,
and $G$ acts regularly on the roots of $F_\alpha (T)$,
and $F_\alpha (T)$ factors over $\F_q$  into the product of all irreducible polynomials of degree $3$.
 \end{cor}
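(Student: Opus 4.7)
The plan is to assemble this corollary directly from results proved earlier in the paper. First I would observe that the initial claim, $\Phi(\alpha) \in \F_q$ for every $\alpha \in \F_{q^3}\setminus \F_q$, is exactly the second case of Corollary \ref{fq23}, so nothing further is needed there.

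For the regularity statement, I would invoke the remark at the end of Section \ref{nro}, which records the well-known fact that $PGL(2,q)$ has exactly two non-regular orbits on $\mathbb{P}^1(\overline{\F_q})$, namely $\mathbb{P}^1(\F_q)$ and $\mathbb{P}^1(\F_{q^2})\setminus \mathbb{P}^1(\F_q)$, and that $\mathbb{P}^1(\F_{q^3})\setminus \mathbb{P}^1(\F_q)$ is a single regular $G$-orbit. Since $\alpha$ lies in this regular orbit, Lemma \ref{orbitpoly1} gives
\[
F_\alpha(T) = \prod_{s\in G}\bigl(T - s(\alpha)\bigr),
\]
with all $|G|$ roots distinct; in particular $G$ permutes these roots regularly.

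For the factorization, I would apply Theorem \ref{falpha1}: the irreducible factors of $F_\alpha(T)$ over $\F_q$ all have the same degree, and this common degree is the degree of the minimal polynomial of $\alpha$, which is $3$ because $\alpha\in \F_{q^3}\setminus\F_q$. The set of roots of $F_\alpha(T)$ coincides with the $G$-orbit of $\alpha$, namely $\F_{q^3}\setminus \F_q$ (here $\infty$ is not a root because $\alpha\ne\infty$ and the orbit contains $\infty$ only if $\alpha\in \mathbb{P}^1(\F_q)$). After normalizing $f$ so that $F_\alpha(T)$ is monic, I would therefore conclude
\[
F_\alpha(T) \;=\; \prod_{\beta\in \F_{q^3}\setminus \F_q}(T-\beta) \;=\; \frac{T^{q^3}-T}{T^q - T},
\]
which is exactly the product of all monic irreducible polynomials of degree $3$ in $\F_q[T]$.

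There is no real obstacle here; the corollary is essentially a bookkeeping exercise combining Corollary \ref{fq23}, the transitivity remark after Lemma \ref{two_non_regular_orbits}, Lemma \ref{orbitpoly1}, and Theorem \ref{falpha1}. The one subtle point to be careful about is the normalization of $f$ and $g$ (to ensure $F_\alpha(T)$ is monic of degree $|G|$), which is harmless since any scalar multiple of $\Phi$ is still a generator for $\F_q(x)^G$.
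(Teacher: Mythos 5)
Your proof is correct and follows essentially the same route as the paper: Corollary \ref{fq23} for $\Phi(\alpha)\in\F_q$, the regular-orbit remark together with Lemma \ref{orbitpoly1} for regularity, and Theorem \ref{falpha1} for the common factor degree $3$. The only cosmetic difference is the last step --- the paper counts $(q^3-q)/3$ degree-$3$ factors and matches this against the known number of irreducible cubics, while you identify the root set directly as $\F_{q^3}\setminus \F_q$ and write $F_\alpha(T)$ as $(T^{q^3}-T)/(T^q-T)$ up to a scalar --- and you are in fact slightly more explicit than the paper in verifying the regularity hypothesis needed to invoke Theorem \ref{falpha1}.
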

  
  \begin{proof}
  We recall that the number of irreducible polynomials of degree 3 over $\F_q$ is known to be $(q^3-q)/3$.
  The roots of all these polynomials comprise all the elements of $\F_{q^3}\setminus \F_q$.
  For any  $\alpha \in \F_{q^3}\setminus \F_q$, let $\lambda = \Phi(\alpha)$.
  Then $\lambda$ is in $\F_q$ by Corollary \ref{fq23}.
  By Theorem \ref{falpha1},
   $f(T)-\lambda g(T)$ factors over $\F_q$  into a product of  irreducible polynomials of degree $3$.
   By degrees there must be $(q^3-q)/3$ factors, so this is all irreducibles of degree 3.
  \end{proof}

 {\bf Example:}  take $q=3$ and $G=PGL(2,3)$.
  Note that $PGL(2,3)$ is isomorphic to $S_4$ and the orders of the elements are 1,2,3,4.
 
 The orbit polynomial for $G$ is
 \[ T^{24} + T^{22} + T^{20} +\Phi(x)T^{18} +
  T^{16} + T^{14}  +\Phi(x)T^{12} + T^{10} + T^8  +\Phi(x)T^6 + T^4 + T^2 + 1
    \]
    where $\Phi(x)=-f(x)/g(x)$ and 
   $f(x)= x^{24} + x^{22} + x^{20} + x^{16} + x^{14} + x^{10} +
    x^8 + x^4 + x^2 + 1$ and $g(x)=x^{18} + x^{12} + x^6$.
 
 We consider $f(x)-\lambda g(x)$ for each $\lambda \in \F_3$.
 We used MAGMA \cite{Bosma}  for the factorizations.
 
 Firstly
  $f(x)-g(x)$ factors over $\F_3$ into eight irreducible factors, all of degree 3.
 In this case $G$ has one regular orbit on the roots of $f(x)-g(x)$, and the
 Frobenius automorphism has eight orbits of size 3.
 If $\alpha$ is a root of \emph{any} of these eight cubics, 
$ \Phi(\alpha)$ takes the same value in $\F_3$, namely $1$.
This illustrates Corollary \ref{falpha2}.
 
 For the polynomial $f(x)+g(x)$  
 which factors as three quadratic factors each having multiplicity 4 ($=q+1$),
 $G$ does not act regularly on the roots. 
 The roots form a non-regular orbit of size 6  consisting of all elements
 in $\F_{q^2}\setminus \F_q$.
 
 The polynomial $f(x)$ (the case $\lambda=0$) factors into six irreducible factors of degree 4.
The roots form one regular $G$-orbit of elements from $\F_{q^4}$.

\bigskip

  As a final remark, we observe that 
  the two polynomials we have studied, 
  $f(T) -\Phi(\alpha) g(T)$ and $cT^{q+1}+dT^q-aT-b$,
  have a common factor.
  
  \begin{cor}
  Let $G$ be  a subgroup of $PGL(2,q)$.
 Let $\Phi(x)=f(x)/g(x)$ be a generator for  $\F_q (x)^G$.
   Let $\alpha \in \overline{\F_q}$ have the property that
$\Phi(\alpha)$ is in $\F_q$.  Let  $s(x)=(ax+b)/(cx+d)\in G$ have the property $s(\alpha)=\alpha^q$
(such $s$  exists by Lemma \ref{main3a}).
 Then the minimal polynomial of $\alpha$ over $\F_q$ divides both 
  $f(T) -\Phi(\alpha) g(T)\in \F_q [T]$ and $cT^{q+1}+dT^q-aT-b \in \F_q [T]$.
  \end{cor}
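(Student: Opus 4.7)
The plan is to show directly that $\alpha$ is a root of each of the two polynomials, and then invoke the defining property of the minimal polynomial: namely, that if a polynomial in $\F_q[T]$ vanishes at $\alpha$, then the minimal polynomial of $\alpha$ over $\F_q$ divides it.

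First I would handle the polynomial $f(T)-\Phi(\alpha)g(T)$. By hypothesis $\Phi(\alpha)\in \F_q$, so in particular $g(\alpha)\neq 0$ (otherwise $\Phi(\alpha)$ would not even be defined as an element of $\overline{\F_q}$, let alone lie in $\F_q$). Hence $\Phi(\alpha)=f(\alpha)/g(\alpha)$, which gives $f(\alpha)-\Phi(\alpha)g(\alpha)=0$. Since $f,g\in\F_q[T]$ and $\Phi(\alpha)\in\F_q$, the polynomial $f(T)-\Phi(\alpha)g(T)$ lies in $\F_q[T]$ and vanishes at $\alpha$, so the minimal polynomial of $\alpha$ over $\F_q$ divides it.

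Next I would handle $cT^{q+1}+dT^q-aT-b$. By hypothesis $s(\alpha)=\alpha^q$, which unpacks to $(a\alpha+b)/(c\alpha+d)=\alpha^q$. We have $c\alpha+d\neq 0$, for otherwise $s(\alpha)=\infty$, contradicting $\alpha^q\in\overline{\F_q}$. Clearing denominators gives $a\alpha+b=\alpha^q(c\alpha+d)=c\alpha^{q+1}+d\alpha^q$, i.e.\ $c\alpha^{q+1}+d\alpha^q-a\alpha-b=0$. Since $a,b,c,d\in\F_q$, this polynomial lies in $\F_q[T]$, so again the minimal polynomial of $\alpha$ over $\F_q$ divides it.

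There is essentially no obstacle here; the corollary is a bookkeeping consequence of Lemma \ref{main3a} combined with the fact that $\Phi(x)$ is a rational function over $\F_q$ (so $f(T)-\Phi(\alpha)g(T)\in\F_q[T]$ when $\Phi(\alpha)\in\F_q$). The only mild care required is to verify that $g(\alpha)\neq 0$ and $c\alpha+d\neq 0$, both of which follow immediately from the hypotheses.
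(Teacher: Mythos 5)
Your proof is correct and is essentially the argument the paper intends: the corollary is stated without proof as an observation, and both facts you verify directly are already established in the paper (that $\alpha$ is a root of $cT^{q+1}+dT^q-aT-b$ is exactly the computation in the proof of Lemma \ref{main3a}, and that $\alpha$ is a root of $f(T)-\Phi(\alpha)g(T)\in\F_q[T]$ is immediate from Lemma \ref{orbitpoly1}). Your added care about $g(\alpha)\neq 0$ and $c\alpha+d\neq 0$ is appropriate and consistent with the paper's conventions.
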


\section{Conjugacy Classes}

Theorem \ref{falpha1} 
can be extended into an interesting correspondence between points on the 
projective line $\mathbb{P}^1(\mathbb{F}_q)$ and conjugacy classes in $PGL(2,q)$. 
First we recall the situation with orbits.
For this section we set
$G=PGL(2,q)$ and  let $\Phi=f(x)/g(x)$ be a generator of $\mathbb{F}_q(x)^G$
with $\deg (f)=|G|$ and $\deg (g) < |G|$,
such as the generator in \eqref{pglgen}.
Consider now $G$ as acting on $\mathbb{P}^1(\overline{\mathbb{F}_q})$.
Given any element $\lambda$ of $\mathbb{F}_q$, there is an 
element $\alpha\in \overline{\mathbb{F}_q}$ such 
that $\Phi(\alpha)=\lambda$, as discussed in Section \ref{rhf}.
Lemma \ref{Phiorbit}  shows that if
$\beta\in \overline{\mathbb{F}_q}$ also satisfies $\Phi(\beta)=\lambda$, then $\alpha$ and $\beta$ are in the same $G$-orbit. 
Thus each element of $\mathbb{F}_q$ determines a different $G$-orbit, giving us $q$ $G$-orbits.
If we admit the formal possibility that $\lambda$ equals $\infty$, 
and recall that $\Phi(\alpha)=\infty$ for any $\alpha$ in the orbit  $\mathbb{P}^1(\mathbb{F}_q)$,
we have $q+1$ different orbits of $PGL(2,q)$, 
each orbit labelled by a point of $\mathbb{P}^1(\mathbb{F}_q)$.

By Lemma \ref{main3a} we know
that if $\alpha\in \overline{\mathbb{F}_q}$ has the property that
$\Phi(\alpha)\in \mathbb{F}_q$, then there is some $s\in G$ with $s(\alpha)=\alpha^q$. 
We will show that we may associate a  conjugacy class of $G$ to
this $\Phi$ value,
namely the conjugacy class  of $s$, 
provided that $\alpha$ lies in a regular $G$-orbit. 
Our main result in this section is that we obtain
different conjugacy classes of $G$ for different values of $\Phi$. Furthermore, with the exception of conjugacy classes consisting of elements of order at most 2, every conjugacy class of $G$ arises in this way. This correspondence enables us to enumerate the conjugacy classes of $G$, a well known result usually proved by other means.

\subsection{Conjugacy classes in $PGL(2,q)$}

As we have observed, $PGL(2,q)$ has exactly two non-regular orbits 
on  $\mathbb{P}^1(\overline{\mathbb{F}_q})$, by Lemma  \ref{two_non_regular_orbits}.
We intend to show how properties of these orbits can be used to deduce information
about the conjugacy classes of $PGL(2,q)$. The conjugacy classes are well studied
from a variety of points of view, but we wish to establish certain facts about them using elementary principles of group theory.

One non-regular orbit is  $\mathbb{P}^1(\mathbb{F}_q)$
of size $q+1$. The stabilizer of a point in this orbit is a subgroup $H_1$, say, of order
$q(q-1)$. The other non-regular orbit may be identified with $\mathbb{F}_{q^2}\setminus
\mathbb{F}_q$ and it has size $q^2-q$. The stabilizer of a point in the second orbit
is a subgroup $H_2$, say, of order $q+1$. Since 
by Lemma \ref{two_fixed}
each element of $PGL(2,q)$ fixes a point 
of  $\mathbb{P}^1(\overline{\mathbb{F}_q})$, we see that the subgroups $H_1$ and $H_2$ together contain
representatives of all the conjugacy classes in $PGL(2,q)$, and we will exploit this observation.

We now consider the structure of the subgroups $H_1$ and $H_2$. Since the $H_i$ are point stabilizers, Lemma 2 applies. We deduce that $H_2$ is cyclic. Thus, any element
of $PGL(2,q)$ conjugate to an element of $H_2$ is centralized by a cyclic subgroup of order
$q+1$ and hence the order of its centralizer is divisible by $q+1$. The subgroup
$H_1$ has a normal subgroup $Q$, say, of order $q$ with cyclic quotient of order $q-1$. It is the normalizer of a Sylow $p$-subgroup of $PGL(2,q)$. Note that the $H_i$ both have even order when $q$ is odd.

Consider a non-identity element $h$, say, of $H_1$. If its order is divisible by $p$, Lemma 3 implies that its order is exactly $p$ and its centralizer has order $q$. If its order is
coprime to $p$, it is a consequence of the Schur-Zassenhaus theorem, for example, that
$h$ is contained in a subgroup of $H_1$ of order $q-1$. This subgroup is cyclic by Lemma
2 and hence the centralizer of $h$ in $PGL(2,q)$ has order divisible by $q-1$.

We summarize this discussion in the following lemma.

\begin{lemma} \label{centralizer_order_estimate}
Let $s$ be a non-identity element of $PGL(2,q)$. Then the order of the centralizer
of $s$ in $PGL(2,q)$ is divisible by one of the integers $q-1$, $q$ or $q+1$. Thus, the centralizer of $s$ has order at least $q-1$.
\end{lemma}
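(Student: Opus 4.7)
The plan is to combine the classification of non-regular $G$-orbits on $\mathbb{P}^1(\overline{\mathbb{F}_q})$ (Lemma \ref{two_non_regular_orbits}) with the explicit structure of the point-stabilizers $H_1$ and $H_2$ worked out in the paragraphs preceding the lemma statement. By Lemma \ref{two_fixed}, any non-identity $s \in G = PGL(2,q)$ has at least one fixed point in $\mathbb{P}^1(\overline{\mathbb{F}_q})$, and this fixed point cannot lie in a regular $G$-orbit (otherwise the stabilizer, trivial by regularity, would contain $s \neq 1$). Hence the fixed point lies in $\mathbb{P}^1(\mathbb{F}_q)$ or in $\mathbb{F}_{q^2}\setminus\mathbb{F}_q$, so $s$ is $G$-conjugate into $H_1$ or into $H_2$. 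Since centralizer orders are invariant under conjugation, it suffices to treat the two cases $s \in H_1$ and $s \in H_2$.

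The case $s \in H_2$ is immediate. Since $H_2$ has order $q+1$, coprime to $p$, and fixes a point of the projective line, Lemma \ref{cyclic_stabilizer}(1) gives that $H_2$ is cyclic, hence abelian, so $H_2 \subseteq C_G(s)$ and $q+1$ divides $|C_G(s)|$.

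Now suppose $s \in H_1$. By Lemma \ref{cyclic_stabilizer}(2), $H_1$ has a normal elementary abelian Sylow $p$-subgroup $Q$ of order $q$ with cyclic quotient of order $q-1$. If $p$ divides $|s|$, then Lemma \ref{Sylow}(2) forces $|s| = p$, and Lemma \ref{Sylow}(3) identifies $C_G(s) = Q$, giving $|C_G(s)| = q$. If instead $|s|$ is coprime to $p$, the Schur--Zassenhaus theorem produces a complement $C$ of $Q$ in $H_1$ of order $q-1$, all such complements being $H_1$-conjugate, and every $p'$-element of $H_1$ is then $H_1$-conjugate into a fixed such $C$. Applying Lemma \ref{cyclic_stabilizer}(1) to $C$ (which still fixes the same point of $\mathbb{P}^1(\mathbb{F}_q)$) shows $C$ is cyclic and abelian, so after conjugation $C \subseteq C_G(s)$ and $q-1$ divides $|C_G(s)|$. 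Combining the cases yields that $|C_G(s)|$ is divisible by one of $q-1$, $q$, $q+1$, and in particular is at least $q-1$. The one step meriting care is the assertion in the coprime subcase that a $p'$-element of $H_1$ is conjugate into a single chosen complement $C$; this follows from Schur--Zassenhaus applied to the normal Hall subgroup $Q$ of the solvable group $H_1$, and is also transparent in the explicit realization of $H_1$ as the affine group $\{z \mapsto az + b\}$ fixing $\infty$, where any non-unipotent affine map has an affine fixed point and can be conjugated into the subgroup $\{z \mapsto az\}$.
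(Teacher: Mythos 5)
Your proof is correct and follows essentially the same route as the paper, whose own argument is the discussion preceding the lemma statement: conjugate $s$ into one of the two point stabilizers $H_1$, $H_2$ via a fixed point in a non-regular orbit, use that $H_2$ is cyclic of order $q+1$, and split the $H_1$ case according to whether $p$ divides $|s|$, invoking Lemma \ref{Sylow} and Schur--Zassenhaus exactly as the paper does. Your version is in fact slightly more careful, e.g.\ in justifying that a $p'$-element of $H_1$ is conjugate into a single complement of $Q$.
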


We will proceed to use these ideas to discuss 
the conjugacy classes involutions (elements of order 2) in $G=PGL(2,q)$. 

\begin{lemma} \label{conjugacy_of_involutions}
Suppose that $q$ is odd. Then there are exactly two conjugacy classes of involutions
in $G=PGL(2,q)$. The involutions in one conjugacy class fix exactly two elements
of $\mathbb{P}^1(\mathbb{F}_q)$, and the involutions in the other class fix exactly
two elements of $\mathbb{F}_{q^2}\setminus \mathbb{F}_q$.
\end{lemma}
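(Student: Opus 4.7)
\textbf{Plan for Lemma \ref{conjugacy_of_involutions}.}
The plan is to classify involutions of $G=PGL(2,q)$ by the field of definition of their fixed points, show that the two ``types'' cannot be fused by conjugation, and then show that within each type all involutions are conjugate.

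First I would invoke Lemma \ref{two_fixed} with $p$ odd: every involution $s$ has exactly two fixed points in $\mathbb{P}^1(\overline{\mathbb{F}_q})$. As in the proof of Lemma \ref{two_fixed}, these are the roots of a quadratic polynomial with coefficients in $\mathbb{F}_q$, so they either both lie in $\mathbb{P}^1(\mathbb{F}_q)$ or form a Galois-conjugate pair in $\mathbb{F}_{q^2}\setminus\mathbb{F}_q$. Moreover, if $h\in G$ and $s(z)=z$, then $hsh^{-1}$ fixes $h(z)$, and $G$ preserves the subset $\mathbb{P}^1(\mathbb{F}_q)$ of $\mathbb{P}^1(\overline{\mathbb{F}_q})$. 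Hence conjugation preserves the ``type,'' so the two types already yield at least two distinct conjugacy classes.

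To prove each type is a single class, I would exploit the transitive actions identified in the remark after Lemma \ref{two_non_regular_orbits}: $G$ acts $3$-transitively on $\mathbb{P}^1(\mathbb{F}_q)$ and transitively on $\mathbb{F}_{q^2}\setminus\mathbb{F}_q$. For the second (type (b)) case, given involutions $s_1,s_2$ with respective fixed-point sets $\{\beta_1,\beta_1^q\}$ and $\{\beta_2,\beta_2^q\}$, pick $g\in G$ with $g(\beta_1)=\beta_2$; then $gs_1g^{-1}$ is an involution fixing $\beta_2$. The stabilizer of $\beta_2$ in $G$ is the cyclic subgroup $H_2$ of order $q+1$ identified in the preceding discussion (cyclic by Lemma \ref{cyclic_stabilizer}); since $q$ is odd, $q+1$ is even, so $H_2$ contains a unique involution, forcing $gs_1g^{-1}=s_2$. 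For the first (type (a)) case, $3$-transitivity on $\mathbb{P}^1(\mathbb{F}_q)$ lets us conjugate the two fixed points of any type (a) involution to $\{0,\infty\}$; the pointwise stabilizer of $\{0,\infty\}$ in $G$ is the diagonal torus $\{x\mapsto\alpha x:\alpha\in\mathbb{F}_q^*\}$, cyclic of order $q-1$, whose unique involution is $x\mapsto -x$.

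The main obstacle is not the transitivity step but the uniqueness step: one must be sure the relevant stabilizers are cyclic, so that the elementary fact that a cyclic group of even order contains exactly one involution applies. The cyclicity of $H_2$ is supplied by Lemma \ref{cyclic_stabilizer}, and the two-point stabilizer in type (a) is visibly the diagonal torus. Once those ingredients are assembled, the existence of exactly two classes and the description of their fixed-point behaviour both follow.
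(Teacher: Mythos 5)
Your proof is correct, but it takes a genuinely different route from the paper's. The paper first bounds the number of classes by two: every involution is conjugate into one of the two point stabilizers $H_1$ (of order $q(q-1)$, the stabilizer of a point of $\mathbb{P}^1(\mathbb{F}_q)$) or $H_2$ (of order $q+1$), each of which has a cyclic Sylow $2$-subgroup when $q$ is odd, so Sylow's theorem gives a single class of involutions inside each $H_i$. It then separates the two classes by a parity count: an involution lying in $H_1$ permutes the set $\mathbb{P}^1(\mathbb{F}_q)$ of even size $q+1$ and fixes at least one point, hence fixes an even number of them, hence exactly two by Lemma \ref{two_fixed}, and similarly for $H_2$ acting on $\mathbb{F}_{q^2}\setminus\mathbb{F}_q$. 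You instead classify involutions directly by whether the two roots of the fixed-point quadratic $cT^2+(d-a)T-b$ are $\mathbb{F}_q$-rational or form a Galois-conjugate pair, observe that this type is a conjugation invariant, and then fuse each type into a single class by aligning fixed points via the transitive action on the corresponding non-regular orbit and invoking the uniqueness of the involution in the resulting cyclic stabilizer (the diagonal torus of order $q-1$ in the split case, $H_2$ of order $q+1$ in the nonsplit case). Your route produces explicit class representatives ($x\mapsto -x$ and the unique involution of the nonsplit torus) at the price of needing $2$-transitivity on $\mathbb{P}^1(\mathbb{F}_q)$ and the identification of the two-point stabilizer; the paper's Sylow argument sidesteps both. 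One small point you should make explicit: to conclude there are \emph{exactly} two classes you need both types to be nonempty, which is immediate but unstated --- $x\mapsto -x$ has order $2$ and fixes $0,\infty$ since $q$ is odd, and the cyclic stabilizer $H_2$ has even order $q+1$ and so contains an involution, whose fixed points must then both lie in $\mathbb{F}_{q^2}\setminus\mathbb{F}_q$.
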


\begin{proof}
As we showed above, each element of $G$ is conjugate to an element of one of the subgroups
$H_1$ and $H_2$. Since we are assuming that $q$ is odd, the $H_i$ both have a cyclic
Sylow 2-subgroup (but of different orders). Now a cyclic 2-group clearly has a unique
involution and it follows from Sylow's theorem that each of the subgroups $H_i$
has a single conjugacy class of involutions. This in turn implies that $G$ has at most two
conjugacy classes of involutions.

We now show $G$ has precisely two conjugacy classes of involutions, and distinguish between
the classes. Let $t_1$ be an involution in $H_1$. Then $t_1$ fixes a point of $\mathbb{P}^1(\mathbb{F}_q)$. Since $t_1$ has even order, and $|\mathbb{P}^1(\mathbb{F}_q)|=q+1$ is also even, it is trivial to prove that $t_1$ fixes an even number of elements of
$\mathbb{P}^1(\mathbb{F}_q)$. Thus $t_1$ fixes at least two points of $\mathbb{P}^1(\mathbb{F}_q)$. Lemma \ref{two_fixed} implies that $t_1$ fixes no additional points
in its action on the projective line over the algebraic closure, and hence, certainly
no point of $\mathbb{F}_{q^2}\setminus \mathbb{F}_q$.

Let $t_2$ be an involution in $H_2$. The same argument as above shows that $t_2$ fixes exactly two points of $\mathbb{F}_{q^2}\setminus \mathbb{F}_q$ and none of $\mathbb{P}^1(\mathbb{F}_q)$. This implies that $t_1$ and $t_2$ are not conjugate in $G$, since otherwise $t_2$ would fix a point of $\mathbb{P}^1(\mathbb{F}_q)$. Thus there are precisely
two conjugacy classes of involutions.
\end{proof}

The analysis of involutions when $q$ is a power of 2 is easier, since it follows from the proof of Lemma
\ref{Sylow} (section 3) that in this case, there is a single class
of these elements in $PGL(2,q)$.

We have seen the significance of equations of the form $s(\alpha)=\alpha^q$, where $s$ is an element of $PGL(2,q)$ and $\alpha$ is in $\overline{\mathbb{F}_q}$, and we examine this
type of equation in more detail here.

\begin{lemma} \label{number_of_solutions_of_equation}
Let $s$ be an element of $PGL(2,q)$. Then the number of elements $\alpha$ in
$\overline{\mathbb{F}_q}$ that satisfy $s(\alpha)=\alpha^q$ is either $q$ or $q+1$. 
\end{lemma}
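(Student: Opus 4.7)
The plan is to translate the equation $s(\alpha) = \alpha^q$ into a polynomial equation in $\alpha$ and count its roots, carefully distinguishing the two cases that can occur according to whether $s$ fixes $\infty$ or not.

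Writing $s(x) = (ax+b)/(cx+d)$ with $ad - bc \neq 0$, the equation $s(\alpha) = \alpha^q$ becomes $(a\alpha + b)/(c\alpha + d) = \alpha^q$, or equivalently
\[
p_s(T) := cT^{q+1} + dT^q - aT - b = 0.
\]
So I would count the roots of $p_s(T)$ in $\overline{\F_q}$, split into cases by whether $c = 0$.

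First I would handle the case $c \neq 0$. Then $p_s$ has degree exactly $q+1$, and the proof reduces to showing $p_s$ has no repeated roots. Computing the derivative in characteristic $p$ (using that $q$ is a power of $p$, so $(q+1) \equiv 1$ and $q \equiv 0 \pmod{p}$) gives $p_s'(T) = cT^q - a$. Any common root $\alpha$ of $p_s$ and $p_s'$ satisfies $\alpha^q = a/c$; substituting into $p_s(\alpha) = 0$ yields
\[
c\alpha \cdot \tfrac{a}{c} + d \cdot \tfrac{a}{c} - a\alpha - b = \tfrac{ad - bc}{c},
\]
which is nonzero by invertibility. Hence $p_s$ is separable and has $q+1$ distinct roots.

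Second I would handle the case $c = 0$. Then $ad \neq 0$, and $p_s(T) = dT^q - aT - b$ is an affine linearized polynomial of degree $q$, with derivative $-a \neq 0$; so it is separable and has $q$ distinct roots in $\overline{\F_q}$.

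Combining the two cases, the number of $\alpha \in \overline{\F_q}$ satisfying $s(\alpha) = \alpha^q$ is $q+1$ when $c \neq 0$ and $q$ when $c = 0$, proving the lemma. The only real subtlety is verifying separability in the degree $q+1$ case, where one must use the characteristic-$p$ simplification of the derivative together with the nonvanishing of the determinant $ad - bc$; everything else is mechanical.
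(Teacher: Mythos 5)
Your proof is correct and follows essentially the same route as the paper: rewrite $s(\alpha)=\alpha^q$ as the vanishing of $cT^{q+1}+dT^q-aT-b$, note the degree is $q+1$ or $q$ according as $c\neq 0$ or $c=0$, and check the polynomial is separable. In fact you supply the one detail the paper merely asserts (that the polynomial has distinct roots), via the correct characteristic-$p$ derivative computation $p_s'(T)=cT^q-a$ and the determinant condition, so nothing is missing.
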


\begin{proof}
We express the action of $s$ in fractional form $s(x)=(ax+b)/(cx+d)$. Then $\alpha$ in
$\overline{\mathbb{F}}_q$ satisfies $s(\alpha)=\alpha^q$ precisely when
\[
c\alpha^{q+1}+d\alpha^q-a\alpha-b=0.
\]
Since $c$ and $d$ cannot both be 0, $\alpha$ is a root of a polynomial of degree 
$q$ or $q+1$. Moreover, this polynomial has distinct roots.  This proves what we require.
\end{proof}

The next lemma provides a little more detail about the solutions of $s(\alpha)=\alpha^q$ provided $s$ has order greater than 2.

\begin{lemma} \label{solutions_in_regular_orbits}
Let $s$ be an element of $PGL(2,q)$ of order greater than $2$. Then there exists some $\alpha$ in
$\overline{\mathbb{F}_q}\setminus \mathbb{F}_{q^2}$ such that $s(\alpha)=\alpha^q$. Thus $\alpha$ is in a regular orbit of $PGL(2,q)$ acting on $\mathbb{P}^1(\overline{\mathbb{F}_q})$.
\end{lemma}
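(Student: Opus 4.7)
The plan is to combine Lemma \ref{number_of_solutions_of_equation}, which gives the total count of solutions to $s(\alpha)=\alpha^q$, with an upper bound on how many of these solutions can actually lie in $\mathbb{F}_{q^2}$. By Lemma \ref{number_of_solutions_of_equation}, the equation has exactly $q$ or $q+1$ solutions in $\overline{\mathbb{F}_q}$. I would then show that any solution lying in $\mathbb{F}_{q^2}$ is a fixed point of $s^2$. The key observation is that since $s(x)=(ax+b)/(cx+d)$ has coefficients in $\mathbb{F}_q$, the map $s$ commutes with the Frobenius $\sigma(x)=x^q$; this is the same computation used in the proof of Theorem \ref{main3b}. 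Therefore, for $\alpha\in\mathbb{F}_{q^2}$ satisfying $s(\alpha)=\sigma(\alpha)$, applying $s$ yields $s^2(\alpha)=s(\sigma(\alpha))=\sigma(s(\alpha))=\sigma^2(\alpha)=\alpha$.

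Since $s$ has order greater than $2$, the element $s^2$ is not the identity, and so by Lemma \ref{two_fixed} it has at most two fixed points on $\mathbb{P}^1(\overline{\mathbb{F}_q})$. Consequently, at most two of the solutions to $s(\alpha)=\alpha^q$ can lie in $\mathbb{F}_{q^2}$. To conclude that a solution outside $\mathbb{F}_{q^2}$ exists, I need the total number of solutions to exceed two. If $c\neq 0$, this is automatic because $q+1\geq 3$. If $c=0$, then $s$ fixes $\infty$ and has the form $x\mapsto \alpha x+\beta$, whose order is either $p$ or a divisor of $q-1$; for $q=2$ this forces order at most $2$, contrary to hypothesis, so $c=0$ forces $q\geq 3$, giving $q\geq 3$ solutions. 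Hence in every case there are at least three solutions but at most two in $\mathbb{F}_{q^2}$, producing an $\alpha\in\overline{\mathbb{F}_q}\setminus\mathbb{F}_{q^2}$ with $s(\alpha)=\alpha^q$.

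For the final assertion, I would invoke the remark following Lemma \ref{two_non_regular_orbits}: the two non-regular orbits of $PGL(2,q)$ on $\mathbb{P}^1(\overline{\mathbb{F}_q})$ are $\mathbb{P}^1(\mathbb{F}_q)$ and $\mathbb{F}_{q^2}\setminus\mathbb{F}_q$, whose union is $\mathbb{P}^1(\mathbb{F}_{q^2})$. Since $\alpha$ is a finite element of $\overline{\mathbb{F}_q}$ not in $\mathbb{F}_{q^2}$, it lies outside both non-regular orbits, hence in a regular one.

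The main obstacle is really the small-$q$ edge case: showing the count of solutions is strictly larger than the number of fixed points of $s^2$. This is why the hypothesis that the order of $s$ exceeds $2$ is essential (so that $s^2\neq 1$ and Lemma \ref{two_fixed} applies), and why one must check that $c=0$ cannot occur for $q=2$ under the order hypothesis. Everything else is a direct combination of Lemma \ref{number_of_solutions_of_equation}, Lemma \ref{two_fixed}, and the classification of non-regular orbits.
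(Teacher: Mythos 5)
Your proposal is correct and follows essentially the same route as the paper: both compare the count of solutions of $s(\alpha)=\alpha^q$ from Lemma \ref{number_of_solutions_of_equation} against the bound of at most two fixed points of $s^2\neq 1$ from Lemma \ref{two_fixed}, since any solution in $\mathbb{F}_{q^2}$ is such a fixed point. The only divergence is in the $q=2$ edge case, where the paper verifies directly that the three solutions lie in $\mathbb{F}_8\setminus\mathbb{F}_2$ while you rule out $c=0$ to guarantee three solutions and then reuse the fixed-point bound; both are valid.
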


\begin{proof}
Suppose that all solutions of $s(z)=z^q$ lie in $\mathbb{F}_{q^2}$. Then any solution
$\alpha$ satisfies $s^2(\alpha)=\alpha$. Now by assumption on the order of $s$, $s^2$ is not the identity and hence it has at most two fixed points on $\mathbb{P}^1(\overline{\mathbb{F}_q})$ by Lemma \ref{two_fixed}. On the other hand, there are at least $q$ solutions to
$s(z)=z^q$ by Lemma \ref{number_of_solutions_of_equation} and hence at least $q$ different
fixed points of $s^2$. We have a contradiction unless $q=2$. However, in the exceptional case when $q=2$, $s$ has order 3 in $PGL(2,q)$ and the equation $s(z)=z^q$ is easily seen to have three different solutions which all lie in $\mathbb{F}_8$ but not in $\mathbb{F}_2$.
This is another contradiction, and our lemma is established.
\end{proof}

The next lemma, also concerned with the equation $s(\alpha)=\alpha^q$, shows how  solutions in a regular orbit are derived from one given solution. 

\begin{lemma} \label{action_of_centralizer_on_roots}
Let $s$ be an element of $G=PGL(2,q)$ of order greater than $2$ and let $\alpha$ be an element of $\overline{\mathbb{F}_q}$ that, in accordance with Lemma $\ref{solutions_in_regular_orbits}$, satisfies
$s(\alpha)=\alpha^q$ and lies in a regular orbit of $G$ acting on
$\mathbb{P}^1(\overline{\mathbb{F}_q})$. Then any other element $\beta$ of 
$\overline{\mathbb{F}_q}$ that satisfies $s(\beta)=\beta^q$ and lies in a regular $G$-orbit has the form $\beta=c(\alpha)$,
where $c\in G$ centralizes $s$. Thus $\alpha$ and $\beta$ are in the same (regular)
$G$-orbit.
\end{lemma}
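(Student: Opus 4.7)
The plan is to show that $C_G(s)$ acts freely and transitively on
\[
\Omega := \{\gamma\in \mathbb{P}^1(\overline{\mathbb{F}_q}) : s(\gamma) = \gamma^q \text{ and } \gamma \text{ lies in a regular } G\text{-orbit}\}.
\]
Once this is done, the desired $c$ with $\beta = c(\alpha)$ is simply the unique element of $C_G(s)$ sending $\alpha$ to $\beta$.

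The straightforward observations come first. If $c \in C_G(s)$, the $\mathbb{F}_q$-rationality of $c$ gives $c\sigma=\sigma c$ (where $\sigma$ is Frobenius), and combined with $sc=cs$ we obtain $s(c(\gamma)) = c(s(\gamma)) = c(\gamma^q) = c(\gamma)^q$, showing that $c$ preserves $\Omega$. Freeness is immediate from $\alpha$ having trivial $G$-stabilizer in a regular orbit, and the same calculation read in reverse shows that for any $\beta\in \Omega$ that happens to lie in $G\cdot\alpha$, the unique $c\in G$ with $c(\alpha)=\beta$ must satisfy $sc(\alpha)=s(\beta)=\beta^q=c(\alpha^q)=cs(\alpha)$ and hence $sc=cs$. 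Therefore $\Omega\cap G\cdot\alpha=\{c(\alpha):c\in C_G(s)\}$ has exactly $|C_G(s)|$ elements, and the substance of the lemma becomes the identity $|\Omega|=|C_G(s)|$.

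For $|\Omega|$, Lemma \ref{number_of_solutions_of_equation} supplies $q+1$ solutions of $s(\gamma)=\gamma^q$ in $\mathbb{P}^1(\overline{\mathbb{F}_q})$. The ones in $\mathbb{P}^1(\mathbb{F}_q)$ are precisely the fixed points of $s$ on $\mathbb{P}^1(\mathbb{F}_q)$, say $n\in\{0,1,2\}$ in number. For a hypothetical solution $\gamma\in\mathbb{F}_{q^2}\setminus\mathbb{F}_q$, applying $s$ twice gives $s^2(\gamma)=s(\gamma^q)=s(\gamma)^q=\gamma^{q^2}=\gamma$; Lemma \ref{powersfix}, applicable because $s^2\neq 1$, then forces $s(\gamma)=\gamma$, whence $\gamma=\gamma^q\in\mathbb{F}_q$, a contradiction. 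Thus $|\Omega|=q+1-n$.

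For $|C_G(s)|$ I would invoke the trichotomy in Section \ref{la}: a preimage $M$ of $s$ in $GL(2,q)$ is of split semisimple type when $n=2$, unipotent type when $n=1$, or non-split semisimple type when $n=0$. In each case the centralizer of $M$ in $GL(2,q)$ is the unit group of the commutative $\mathbb{F}_q$-algebra $\mathbb{F}_q[M]$, isomorphic to $\mathbb{F}_q\oplus \mathbb{F}_q$, $\mathbb{F}_q[\epsilon]/(\epsilon^2)$, or $\mathbb{F}_{q^2}$ respectively, of orders $(q-1)^2$, $q(q-1)$, $q^2-1$; descending to $PGL(2,q)$ by quotienting the scalar subgroup of order $q-1$, and verifying that no extra commuting element arises in $PGL$ (in the non-split case the only further candidate is an element swapping the two $\mathbb{F}_{q^2}$-fixed points of $s$, but such an element conjugates $s$ to $s^{-1}\neq s$ because $r>2$), yields $|C_G(s)|\in\{q-1,\,q,\,q+1\}=q+1-n$ in the three cases. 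Matching this with the count of $|\Omega|$ completes the proof; the main technical obstacle is precisely the last centralizer computation, which requires the Frobenius-equivariance on the diagonal basis and the non-commuting swap argument to pin down the centralizer over $\mathbb{F}_q$ in the anisotropic case.
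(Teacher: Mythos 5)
Your proposal is correct, and while its overall skeleton matches the paper's (both show that the centralizer orbit of $\alpha$ consists of solutions of $s(z)=z^q$, and both compare the size of that orbit with the total number of solutions supplied by Lemma \ref{number_of_solutions_of_equation}), the decisive counting step is genuinely different. The paper uses only the crude lower bound from Lemma \ref{centralizer_order_estimate} (the centralizer has order at least $q-1$) together with the upper bound of $q+1$ solutions: a second regular orbit of solutions would force at least $2(q-1)$ solutions, impossible for $q\geq 4$, and the cases $q=2,3$ are then excluded by ad hoc inspection of the conjugacy classes. You instead compute both sides exactly --- $|\Omega|=q+1-n$ by subtracting the $n$ rational fixed points and ruling out solutions in $\mathbb{F}_{q^2}\setminus\mathbb{F}_q$ (the same argument as Lemma \ref{it_is_an_involution}), and $|C_G(s)|=q+1-n$ from the Jordan-form trichotomy of Section \ref{la} --- which makes the argument uniform in $q$ and yields the sharper statement that $C_G(s)$ acts simply transitively on the regular-orbit solutions. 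The price is the centralizer computation in $PGL(2,q)$, which you only sketch: the descent from $GL$ to $PGL$ needs the ``swap conjugates $s$ to $s^{-1}\neq s$'' argument in the split semisimple case as well as the non-split one, not only in the anisotropic case you single out. One small point worth making explicit: Lemma \ref{number_of_solutions_of_equation} as stated gives $q$ or $q+1$ affine solutions, and your exact count of $q+1$ points of $\mathbb{P}^1(\overline{\mathbb{F}_q})$ relies on observing that the deficient case $c=0$ is exactly the case where $\infty$ is itself a solution; since your argument (unlike the paper's) needs the exact total rather than just an upper bound, this one-line verification should be included.
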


\begin{proof}
Let $C$ be the centralizer of $s$ in $G$. As we have described in the discussion above, 
$|C|$ is divisible by $q-1$, $q$ or $q+1$. Let $c$ be an element of $C$. Then we claim that
$c(\alpha)$ is also a solution of $s(z)=z^q$. 
For, since $s$ commutes with $c$ and the $G$-action commutes with Frobenius action,
\[
s(c(\alpha))=c(s(\alpha))=c(\alpha^q)=c(\alpha)^q.
\]
Thus $c(\alpha)$ has the desired property and since $\alpha$ is in a regular orbit,
we obtain at least $q-1$ solutions to $s(z)=z^q$ which all lie in the same $G$-orbit.

Suppose if possible that we have another element $\beta$ that is a solution of
$s(z)=z^q$ and lies in a different regular $G$-orbit. Then we obtain at least $q-1$  further solutions to the equation $s(z)=z^q$.
Since these solutions lie in a different  $G$-orbit to $\alpha$,
we have at least $2(q-1)$ solutions to the equation $s(z)=z^q$.
Lemma \ref{number_of_solutions_of_equation} implies that $2(q-1)\leq q+1$. 
This can only occur when $q=2$ or $3$.

We consider these two values of $q$ separately. When $q=2$, the only possibility
for $s$ that  arises is that it is an element of order $3$, 
with centralizer of order $3$, and clearly we have a contradiction here. 
 When $q=3$, the two conjugacy classes of $PGL(2,3)$ that we are considering (those that are not involutions or the identity) consist of elements of order
$3$ and $4$, respectively, and the corresponding centralizers have order $3$ and $4$, not 
$2=q-1$. Thus, even when $q=3$, we have a contradiction and have thus proved that all
solutions of $s(z)=z^q$ which lie in regular $G$-orbits are in the same regular orbit.

Finally, suppose that $\gamma$ and $\delta$ are elements of $\overline{\mathbb{F}_q}$
that lie in the same regular $G$-orbit and satisfy
\[
s(\gamma)=\gamma^q,\qquad s(\delta)=\delta^q.
\]
Then since $\gamma$ and $\delta$ lie in the same regular $G$-orbit, $\gamma=g(\delta)$
for a unique $g$ in $G$. We then have $\gamma^q=g(\delta^q)$ and hence
\[
s(\gamma)=\gamma^q=g(\delta^q)=gs(\delta)=gsg^{-1}(\gamma).
\]
Since $\gamma$ belongs to a regular $G$-orbit, it must be the case that $s=gsg^{-1}$, and hence $g$ centralizes $s$. We have thus proved that the solutions of $s(z)=z^q$ in the same regular orbit have the form
$c(\alpha)$ where $\alpha$ is a given solution and $c$ centralizes $s$, as required.
\end{proof}

Note in connection with Lemma \ref{action_of_centralizer_on_roots} that we are not claiming that all solutions of $s(z)=z^q$ are in regular $G$-orbits. For it is possible
that $s$ fixes an element $\alpha$, say, of $\mathbb{F}_q$. Then we have $s(\alpha)=\alpha^q$ but $\alpha$ is certainly not in a regular orbit. 

We conclude this section with two special results relating to involutions.

\begin{lemma} \label{action_of_an_involution}
Let $s$ be an involution in $PGL(2,q)$. Then there exists $\alpha$ in
$\mathbb{F}_{q^2}\setminus \mathbb{F}_q$ such that $s(\alpha)=\alpha^q$.
\end{lemma}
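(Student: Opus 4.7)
The plan is to analyze the polynomial
\[
p_s(T) = cT^{q+1}+dT^q-aT-b
\]
appearing in Lemma \ref{number_of_solutions_of_equation}, whose roots in $\overline{\mathbb{F}_q}$ are exactly the $\alpha$ satisfying $s(\alpha)=\alpha^q$, and to show by a counting argument that at least one root must land in $\mathbb{F}_{q^2}\setminus\mathbb{F}_q$.

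First I would show that every solution $\alpha$ automatically lies in $\mathbb{F}_{q^2}$. Because $s$ has coefficients in $\mathbb{F}_q$, its action commutes with the Frobenius $\sigma(z)=z^q$, as is verified by the same direct computation used in the proof of Theorem \ref{main3b}. Applying $s$ to $s(\alpha)=\alpha^q$ therefore yields
\[
s^2(\alpha)=s(\alpha^q)=s(\alpha)^q=\alpha^{q^2}.
\]
Since $s$ is an involution, $s^2$ is the identity on $\mathbb{P}^1(\overline{\mathbb{F}_q})$, so $\alpha^{q^2}=\alpha$, i.e.\ $\alpha\in\mathbb{F}_{q^2}$.

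Next I would bound the number of solutions that lie in $\mathbb{F}_q$. If $\alpha\in\mathbb{F}_q$ then $\alpha^q=\alpha$, so $s(\alpha)=\alpha$; hence solutions in $\mathbb{F}_q$ are fixed points of $s$, and by Lemma \ref{two_fixed} there are at most two such points in $\mathbb{P}^1(\overline{\mathbb{F}_q})$. Now Lemma \ref{number_of_solutions_of_equation} guarantees $q$ or $q+1$ distinct solutions in $\overline{\mathbb{F}_q}$. Combining these two bounds with the first step, the number of solutions in $\mathbb{F}_{q^2}\setminus\mathbb{F}_q$ is at least $q-2$, which is $\geq 1$ as soon as $q\geq 3$.

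The only case to be handled separately is $q=2$, where the na\"ive bound $q-2$ is useless. Here $p=2$, so $s$ has order equal to the characteristic and is therefore unipotent; by Lemma \ref{two_fixed} it has exactly one fixed point in $\mathbb{P}^1(\overline{\mathbb{F}_q})$, so at most one solution in $\mathbb{F}_q$, and we get at least $q-1=1$ solution in $\mathbb{F}_4\setminus\mathbb{F}_2$. The main obstacle is really just this small-$q$ bookkeeping; all the substantive content is in the commutation with Frobenius and the fixed-point count from Lemma \ref{two_fixed}.
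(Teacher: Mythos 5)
Your proof is correct and follows essentially the same route as the paper: both use the count of at least $q$ distinct solutions from Lemma \ref{number_of_solutions_of_equation} together with the fixed-point bound of Lemma \ref{two_fixed} to rule out all solutions lying in $\mathbb{F}_q$, with the same separate treatment of $q=2$ via the unipotent case. Your explicit verification that every solution lies in $\mathbb{F}_{q^2}$ (via $s^2(\alpha)=\alpha^{q^2}$) is a step the paper leaves implicit, and is a worthwhile addition.
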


\begin{proof}
Lemma \ref{number_of_solutions_of_equation} shows that there at least $q$ solutions in
$\overline{\mathbb{F}_q}$ of the equation $s(z)=z^q$. Suppose if possible that all these solutions lie in $\mathbb{F}_q$. Then it follows that $s$ fixes at least $q$ different points of $\mathbb{F}_q$. Lemma \ref{two_fixed} implies that this is only possible if
$q=2$. But now, in this exceptional case, Lemma \ref{two_fixed} also implies that as we are in characteristic 2 and $s$ is an involution, it fixes only one point, and we have a contradiction. Thus the required $\alpha$ exists.
\end{proof}

\begin{lemma} \label{it_is_an_involution}
Let $\alpha$ be an element of $\mathbb{F}_{q^2}\setminus \mathbb{F}_q$ and let $s$ be an element of $PGL(2,q)$ that satisfies $s(\alpha)=\alpha^q$. Then $s$ is an involution.
\end{lemma}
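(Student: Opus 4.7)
The plan is to show directly that $s^2$ acts as the identity on $\alpha$, and then invoke Lemma \ref{powersfix} to upgrade this to $s^2 = 1$.

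First I would apply $s$ a second time and use the fact that an element of $PGL(2,q)$, having coefficients in $\F_q$, commutes with the Frobenius automorphism $\sigma(z) = z^q$ (this is the commutation already exploited in the proof of Theorem \ref{main3b} and Theorem \ref{falpha1}). From $s(\alpha) = \alpha^q$, this commutation yields
\[
s^2(\alpha) = s(s(\alpha)) = s(\alpha^q) = s(\alpha)^q = (\alpha^q)^q = \alpha^{q^2}.
\]
Since $\alpha \in \mathbb{F}_{q^2}$, the right-hand side equals $\alpha$, so $\alpha$ is a fixed point of $s^2$.

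Next I would rule out the possibility $s^2 \neq 1$. Lemma \ref{powersfix} states that if $s^r$ is not the identity and has a fixed point $\omega$, then $s$ must also fix $\omega$. Applying this with $r = 2$ and $\omega = \alpha$: if $s^2 \neq 1$, then $s(\alpha) = \alpha$. But by hypothesis $s(\alpha) = \alpha^q$, and since $\alpha \notin \mathbb{F}_q$ we have $\alpha^q \neq \alpha$. This is a contradiction, so $s^2 = 1$.

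Finally I would observe that $s$ itself is not the identity, since $s(\alpha) = \alpha^q \neq \alpha$, and conclude that $s$ is an involution. The only potential obstacle is the commutation of $s$ with Frobenius, but this is a standard and already-used fact in the paper, so the argument is essentially immediate.
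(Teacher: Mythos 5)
Your proof is correct and follows essentially the same route as the paper: compute $s^2(\alpha)=\alpha^{q^2}=\alpha$, then use Lemma \ref{powersfix} to conclude that $s^2\neq 1$ would force $s(\alpha)=\alpha$, contradicting $s(\alpha)=\alpha^q\neq\alpha$. The only difference is that you spell out the Frobenius commutation step and explicitly note $s\neq 1$, both of which the paper leaves implicit.
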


\begin{proof}
We have
\[
s^2(\alpha)=\alpha^{q^2}=\alpha,
\]
since $\alpha$ is in $\mathbb{F}_{q^2}$. Suppose that $s^2$ is not the identity. Then Lemma
\ref{powersfix} implies that $s$ fixes $\alpha$. This means that $\alpha$ is in
$\mathbb{F}_q$, contrary to hypothesis. It must the case that $s^2=1$, and $s$ is indeed an involution.
\end{proof}

\subsection{Conjugacy Classes and Regular Orbits}

We show here how to use the previous three lemmas to link points in $\mathbb{F}_q$ with conjugacy classes of elements of order greater than 2 in $PGL(2,q)$.

\begin{thm} \label{conjugacy_class_correspondence}
Let $G=PGL(2,q)$ and let $\Phi$ be a generator of $\mathbb{F}_q(x)^G$. 
Let $\gamma$ be any element of $\mathbb{F}_{q^2}\setminus \mathbb{F}_q$ and let
$\mu=\Phi(\gamma)$. Then $\mu$ is an element of $\mathbb{F}_q$. Let
$\lambda$ be any element of $\mathbb{F}_q$ different from $\mu$ and 
let $\alpha\in \overline{\mathbb{F}_q}$
satisfy $\Phi(\alpha)=\lambda$. Let $s\in G$ satisfy $s(\alpha)=\alpha^q$, in accordance
with Lemma \ref{main3a}. 
Then $\alpha$ lies in a regular orbit of $G$ acting on
$\mathbb{P}^1(\overline{\mathbb{F}_q})$ 
and $\lambda$ determines a unique conjugacy class of
$G$, namely, the conjugacy class of $s$. Each conjugacy class of $G$ consisting of elements
of order greater than $2$ arises in this correspondence exactly once.
\end{thm}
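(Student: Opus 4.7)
The plan is to establish four items in sequence: (i) $\alpha$ lies in a regular $G$-orbit; (ii) the element $s$ is uniquely determined by $\alpha$; (iii) the conjugacy class of $s$ depends only on $\lambda$; and (iv) every conjugacy class of elements of order greater than $2$ arises from exactly one such $\lambda$.

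For (i), I would invoke the remark following Lemma \ref{two_non_regular_orbits}: the two non-regular $G$-orbits on $\mathbb{P}^1(\overline{\mathbb{F}_q})$ are $\mathbb{P}^1(\mathbb{F}_q)$ and $\mathbb{F}_{q^2}\setminus\mathbb{F}_q$. By Lemma \ref{Phiorbit}, $\Phi$ is constant on each orbit and separates them. Because $\deg f > \deg g$, we have $\Phi(\infty)=\infty$, so $\Phi$ takes the value $\infty$ on $\mathbb{P}^1(\mathbb{F}_q)$ and the value $\mu$ on $\mathbb{F}_{q^2}\setminus\mathbb{F}_q$ (by definition of $\mu$). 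Since $\lambda\in\mathbb{F}_q\setminus\{\mu\}$ is neither of these, $\alpha$ must lie in a regular orbit.

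For (ii) and (iii), note that a regular orbit has trivial point stabilizers, so any $g\in G$ is determined by its image $g(\alpha)$; thus the $s$ produced by Lemma \ref{main3a} is unique. If $\alpha'$ is another preimage of $\lambda$ under $\Phi$, then Lemma \ref{Phiorbit} gives $\alpha'=g(\alpha)$ for some $g\in G$, and the element $gsg^{-1}$ satisfies
\[
(gsg^{-1})(\alpha') = g(s(\alpha)) = g(\alpha^q) = (\alpha')^q,
\]
so by uniqueness it is the element $s'$ attached to $\alpha'$. Hence the conjugacy class of $s$ is well-defined by $\lambda$.

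For (iv), given a conjugacy class containing some $s$ with order greater than $2$, Lemma \ref{solutions_in_regular_orbits} supplies $\alpha \in \overline{\mathbb{F}_q}\setminus\mathbb{F}_{q^2}$ with $s(\alpha)=\alpha^q$ in a regular orbit. Then $\lambda:=\Phi(\alpha)\in \mathbb{F}_q$ by Lemma \ref{main3a}, and $\lambda\neq\mu$ because otherwise Lemma \ref{Phiorbit} would place $\alpha$ in the orbit of $\gamma$, namely $\mathbb{F}_{q^2}\setminus\mathbb{F}_q$. For uniqueness, suppose $\lambda_1\neq \lambda_2$ both produce the class of $s$; after conjugating we may assume both correspondences are realised by the \emph{same} $s$ with distinct $\alpha_i$ in regular orbits satisfying $s(\alpha_i)=\alpha_i^q$ and $\Phi(\alpha_i)=\lambda_i$. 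Lemma \ref{action_of_centralizer_on_roots} then forces $\alpha_1$ and $\alpha_2$ to lie in one regular $G$-orbit, hence $\Phi(\alpha_1)=\Phi(\alpha_2)$ by Lemma \ref{Phiorbit}, contradicting $\lambda_1\neq\lambda_2$. The main obstacle is exactly this uniqueness step, which reduces to Lemma \ref{action_of_centralizer_on_roots}; the remaining parts are routine bookkeeping of orbit-stabilizer data and applications of Lemmas \ref{main3a} and \ref{Phiorbit}.
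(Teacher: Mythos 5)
Your proposal is correct and follows essentially the same route as the paper's proof: the regularity of the orbit of $\alpha$ via the identification of the two non-regular orbits with $\mathbb{P}^1(\mathbb{F}_q)$ and $\mathbb{F}_{q^2}\setminus\mathbb{F}_q$, well-definedness of the class via uniqueness of $s$ on a regular orbit together with commutation of the $G$-action with Frobenius, injectivity via Lemma \ref{action_of_centralizer_on_roots}, and surjectivity onto classes of order greater than $2$ via Lemma \ref{solutions_in_regular_orbits}. The only cosmetic difference is your explicit conjugation step reducing the injectivity argument to a single representative $s$, which the paper leaves implicit.
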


\begin{proof}
We note first that, by Corollary \ref{fq23}, $\Phi(\gamma)=\mu$ is an element of $\mathbb{F}_q$.
Now consider for $\lambda\neq \mu$, any element $\alpha$ in $\overline{\mathbb{F}_q}$
that satisfies $\Phi(\alpha)=\lambda$. We claim that $\alpha$ is not in $\mathbb{F}_q$ or
in $\mathbb{F}_{q^2}$. For the two non-regular $G$-orbits are $\mathbb{P}^1(\mathbb{F}_q)$
and $\mathbb{F}_{q^2}\setminus \mathbb{F}_q$. We have $\Phi(\beta)=\infty$ for
all points of $\mathbb{F}_q$, whereas, since $\mathbb{F}_{q^2}\setminus \mathbb{F}_q$
is a single $G$-orbit,
\[
\Phi(\beta)=\Phi(\gamma)=\mu
\]
if $\beta\in \mathbb{F}_{q^2}\setminus \mathbb{F}_q$.

 Thus, we have proved that
$\alpha$ is not in $\mathbb{F}_q$ nor
in $\mathbb{F}_{q^2}$, and is thus in a regular $G$-orbit. It follows then
that $s$ is the unique element
of $G$ mapping $\alpha$ into $\alpha^q$. 

Suppose now that 
$\beta\in \overline{\mathbb{F}_q}$ also satisfies $\Phi(\beta)=\lambda$. 
By Lemma \ref{Phiorbit}, $\beta$ is in the same $G$-orbit as $\alpha$, so $\beta$ is in a regular orbit.
Then there is a unique element $t$ in $G$ with $t(\beta)=\beta^q$. 
We aim to show that $t$ is conjugate in $G$ to $s$. 

Because $\alpha$ and $\beta$ both determine the same $\Phi$ value, Lemma 10 implies that there is an element $g$ in $G$ with $\beta=g(\alpha)$. 
The $G$-action on $\mathbb{P}^1(\overline{\mathbb{F}_q})$ commutes with the Frobenius action, 
as shown in the proof of Theorem \ref{falpha1}.
It follows that
\[
gsg^{-1}(\beta)=gs(\alpha)=g(\alpha^q)=g(\alpha)^q=\beta^q.
\]
By uniqueness of $t$, we get $gsg^{-1}=t$ and $t$ is conjugate to $s$, as required.

We now show that the conjugacy class of $s$ is not associated to any other element of
$\mathbb{F}_q$ different from $\mu$ and $\lambda$ just considered. For suppose that
there exists $\delta$ in $\overline{\mathbb{F}_q}$ such that $\Phi(\delta)=\kappa$, where
$\kappa$ is different from $\mu$ and $\lambda$, and $s(\delta)=\delta^q$. Then, as we argued
above, $\delta$ is in a regular $G$-orbit. 

Next we apply Lemma \ref{action_of_centralizer_on_roots}. Since $\alpha$ and $\delta$ are both in regular $G$-orbits and are solutions to $s(z)=z^q$, they lie in the same
$G$-orbit. But this implies that $\Phi$ takes the same value on $\alpha$ and $\delta$, contrary to assumption. It follows that the conjugacy class of $s$ is uniquely associated
to the element $\lambda$ of $\mathbb{F}_q$, as required.

We conclude by showing that if $t$ is any element of $G$ of order greater than
2, its conjugacy class in $G$ arises in this association. Lemma \ref{solutions_in_regular_orbits} shows that there is some element $\epsilon$ in
$\overline{\mathbb{F}_q}$ that lies in a regular $G$-orbit and satisfies $t(\epsilon)=\epsilon^q$. Then $\Phi(\epsilon)=\Phi(\epsilon)^q$ and thus $\Phi(\epsilon)$ is an element
$\nu$, say, of $\mathbb{F}_q$. The conjugacy class of $t$ in $G$ is therefore associated
with $\nu$, and we have thus  shown 
that all conjugacy classes of elements of order
greater than 2 arise uniquely.
\end{proof}

\subsection{Non-regular Orbits}

We have set $\Phi(\gamma)=\mu$, where $\gamma$ is any element of $\mathbb{F}_{q^2}\setminus \mathbb{F}_q$. We investigate here if $\mu$ determines a unique conjugacy class of
$G=PGL(2,q)$. As we have seen, this involves solving the equation $s(\gamma)=\gamma^q$
for $s$ in $G$. Lemma \ref{it_is_an_involution} shows that $s$ is an involution.
Thus, from our discussion of involutions, the conjugacy class of $s$ is uniquely
determined by $\mu$ when $q$ is even, and consequently we have a complete correspondence between
conjugacy classes of $G$ and values of $\Phi$ in $\mathbb{F}_q$.

On the other hand, suppose that $q$ is odd, and let $s$ and $t$ be representatives
of the two different conjugacy classes of involutions in $G$. Lemma \ref{action_of_an_involution} shows that there are elements $\alpha$ and $\beta$ of 
$\mathbb{F}_{q^2}\setminus \mathbb{F}_q$ with
\[
s(\alpha)=\alpha^q,\qquad t(\beta)=\beta^q,
\]
and of course we have $\Phi(\alpha)=\Phi(\beta)=\mu$.

Thus $\mu$ does not determine a unique conjugacy class of $G$, and indeed both conjugacy classes of involutions are associated to $\mu$.
We deduce that $PGL(2,q)$ has $q+2$ conjugacy classes when $q$ is odd.

\subsection{Final Summary}

We summarize the results of this section here, which provide an extension 
of Theorem \ref{falpha1}.
 Associating the point $\infty$ of $\mathbb{P}^1(\mathbb{F}_q)$ with the identity class
of $PGL(2,q)$, we have established the following correspondence when $q$ is even.

\begin{thm} \label{conjugacy_classes_and_projective_points}
Let $q$ be a power of $2$ and let $G=PGL(2,q)$. Let $\Phi=f(x)/g(x)$ be a generator of 
$\mathbb{F}_q(x)^G$  with $\deg (f)=|G|$ and $\deg (g) < |G|$.
There is a 1-1 correspondence between
the points of $\mathbb{P}^1(\F_q)$ and the conjugacy classes of $G$. 
In this correspondence, $\infty$ corresponds to the identity class. For any element
$\lambda$ of $\mathbb{F}_q$, let $\alpha$ in $\overline{\mathbb{F}_q}$ satisfy
$\Phi(\alpha)=\lambda$ and let $s$ be an element of $G$ with
$s(\alpha)=\alpha^q$. Then $\lambda$ corresponds to the conjugacy class of $s$ in $G$. 

Furthermore, provided $\alpha$ lies in a regular orbit of $G$ acting on 
$\mathbb{P}^1(\overline{\mathbb{F}_q})$, the polynomial $F_\alpha(T)=f(T)-\lambda g(T)$
is a product of $|G|/r$ different irreducible polynomials of degree $r$, where
$r$ is the order of $s$. In the case $\alpha$ does not lie in a regular orbit (which occurs
when $\alpha$ is in $\mathbb{F}_{q^2}\setminus
\mathbb{F}_q$), $F_\alpha(T)$ is a product of $(q^2-q)/2$ different irreducible  polynomials of degree $2$, each occurring with multiplicity $q+1$.
\end{thm}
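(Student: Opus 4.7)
The plan is to assemble the correspondence from the three separate cases already handled in this section, then read off the factorization statements from Lemma \ref{orbitpoly1} combined with Theorem \ref{main3b}. First I would set up the correspondence: by Theorem \ref{conjugacy_class_correspondence}, the elements $\lambda\in\F_q\setminus\{\mu\}$ are in bijection with the conjugacy classes of $G$ consisting of elements of order greater than $2$; assigning $\mu=\Phi(\gamma)$ (for any $\gamma\in\F_{q^2}\setminus\F_q$) to the class of involutions, which is a single class because $q$ is even (as noted after Lemma \ref{conjugacy_of_involutions}, using the proof of Lemma \ref{Sylow}); and assigning $\infty$ to the identity class. This gives $q+1$ pairs, which matches the number of conjugacy classes of $PGL(2,q)$ in even characteristic. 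To check bijectivity, I would verify that the involution class cannot already be among those detected by a $\lambda\ne \mu$: if $s$ is an involution with $s(\alpha)=\alpha^q$ for some $\alpha$ with $\Phi(\alpha)=\lambda\ne\mu$, then $\alpha^{q^2}=s^2(\alpha)=\alpha$, so $\alpha\in\F_{q^2}$; but $\alpha\notin\F_q$ (else $\Phi(\alpha)=\infty$), forcing $\alpha\in\F_{q^2}\setminus\F_q$, hence $\lambda=\Phi(\alpha)=\mu$, a contradiction.

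For the regular-orbit factorization, suppose $\alpha$ lies in a regular $G$-orbit. Lemma \ref{orbitpoly1} immediately gives
\[
F_\alpha(T)=\prod_{s\in G}(T-s(\alpha)),
\]
a separable polynomial with $|G|$ distinct roots. The Frobenius $\sigma$ permutes these roots (since $F_\alpha\in\F_q[T]$) and commutes with the $G$-action on $\mathbb{P}^1(\overline{\F_q})$, exactly as in the proof of Theorem \ref{falpha1}. Applying Lemma \ref{comm_action2} with $H=\langle\sigma\rangle$ shows that $G$ transitively permutes the Frobenius orbits, so these orbits all have the same size $r$; by Theorem \ref{main3b}, this common size equals the order of $s$, and each orbit is the root set of an irreducible factor of degree $r$. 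Counting gives exactly $|G|/r$ irreducible factors, all distinct because $F_\alpha$ is separable.

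For the non-regular case with $\alpha\in\F_{q^2}\setminus\F_q$, the $G$-orbit of $\alpha$ is precisely the non-regular orbit $\F_{q^2}\setminus\F_q$ of size $q^2-q$, so the stabilizer $G_\alpha$ has order $q+1$. Lemma \ref{orbitpoly1} then gives
\[
F_\alpha(T)=\prod_{s\in G/G_\alpha}\bigl(T-s(\alpha)\bigr)^{|G_\alpha|}=\prod_{s\in G/G_\alpha}\bigl(T-s(\alpha)\bigr)^{q+1},
\]
so the $q^2-q$ distinct roots each appear with multiplicity $q+1$. Since every root lies in $\F_{q^2}\setminus\F_q$, its minimal polynomial over $\F_q$ is quadratic, and the Frobenius pairs the roots into $(q^2-q)/2$ such pairs; each pair yields one irreducible quadratic, appearing with multiplicity $q+1$ in $F_\alpha(T)$.

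The main obstacle is not any of the factorization arguments, which are straightforward bookkeeping once the earlier lemmas are in place; rather, it is pinning down the counting and bijectivity of the correspondence, specifically the observation that when $q$ is even the single class of involutions exhausts the preimage of $\mu$ under the assignment, so we get exactly $q+1$ classes and no double-counting. Once that is secure, the rest is immediate from Lemma \ref{orbitpoly1}, Theorem \ref{main3b}, and the commuting-action machinery of Section \ref{sect_comm_action}.
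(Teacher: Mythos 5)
Your proposal is correct and follows essentially the same route as the paper, which presents this theorem as a summary whose proof is the assembly of Theorem \ref{conjugacy_class_correspondence}, the involution lemmas (\ref{action_of_an_involution}, \ref{it_is_an_involution}, and the single involution class in characteristic $2$), together with Lemma \ref{orbitpoly1}, Lemma \ref{comm_action2} and Theorem \ref{main3b} for the factorization statements. Your explicit check that no $\lambda\neq\mu$ can yield the involution class is exactly the right point to make the bijection airtight, and your multiplicity count in the non-regular case matches the paper's.
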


When $q$ is odd we have the following summary.

\begin{thm} \label{conjugacy_classes_in_odd_characteristic}
Let $q$ be a power of an odd prime and let $G=PGL(2,q)$. Let $\Phi=f(x)/g(x)$ be a generator of 
$\mathbb{F}_q(x)^G$ with $\deg (f)=|G|$ and $\deg (g) < |G|$,
There is a 1-1 correspondence between $q$
 points of $\mathbb{P}^1(\F_q)$ and $q$ of the conjugacy classes of $G$. 
In this correspondence, $\infty$ corresponds to the identity class. For any element
$\lambda$ of $\mathbb{F}_q$, let $\alpha$ in $\overline{\mathbb{F}_q}$ satisfy $\Phi(\alpha)=\lambda$ and let $s$ be an element of $G$ that satisfies
$s(\alpha)=\alpha^q$. Then $\lambda$ corresponds uniquely to the conjugacy class of $s$ in $G$ except
in the case that $\alpha$ is in $\mathbb{F}_{q^2}\setminus \mathbb{F}_q$. 
In the case that $\alpha\in \mathbb{F}_{q^2}\setminus \mathbb{F}_q$, 
$s$ is an involution in $G$ but its conjugacy class is not uniquely determined.
\end{thm}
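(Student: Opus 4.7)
The plan is to deduce this theorem by combining Theorem \ref{conjugacy_class_correspondence} with the analysis of the exceptional value $\mu=\Phi(\gamma)$, where $\gamma\in\mathbb{F}_{q^2}\setminus\mathbb{F}_q$, which was carried out in the preceding subsection on non-regular orbits. The point is that everything needed is already proved; only the bookkeeping has to be assembled.

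First I would handle the ``nice'' part of the correspondence. For each $\lambda\in\mathbb{F}_q\setminus\{\mu\}$, Theorem \ref{conjugacy_class_correspondence} produces a unique conjugacy class of $G=PGL(2,q)$, namely the class of an $s$ satisfying $s(\alpha)=\alpha^q$ for some (equivalently, any) $\alpha$ with $\Phi(\alpha)=\lambda$; moreover it says that this $\alpha$ lies in a regular $G$-orbit and that, as $\lambda$ ranges over $\mathbb{F}_q\setminus\{\mu\}$, the resulting classes exhaust exactly the conjugacy classes of elements of order greater than $2$. Adjoining the convention that $\infty\in\mathbb{P}^1(\mathbb{F}_q)$ corresponds to the identity class yields a 1-1 correspondence between the $q$ points $\{\infty\}\cup(\mathbb{F}_q\setminus\{\mu\})$ and the $q$ conjugacy classes consisting of the identity class together with all classes of elements of order greater than $2$.

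Next I would treat the exceptional value $\lambda=\mu$. By Lemma \ref{Phiorbit}, $\Phi$ separates $G$-orbits on $\mathbb{P}^1(\overline{\mathbb{F}_q})$; since $\mathbb{F}_{q^2}\setminus\mathbb{F}_q$ is a single non-regular $G$-orbit on which $\Phi$ takes the value $\mu$, any $\alpha\in\overline{\mathbb{F}_q}$ with $\Phi(\alpha)=\mu$ must lie in $\mathbb{F}_{q^2}\setminus\mathbb{F}_q$. Then Lemma \ref{it_is_an_involution} forces every $s\in G$ satisfying $s(\alpha)=\alpha^q$ to be an involution, confirming half of the claim for this case. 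To show that $\mu$ fails to determine a \emph{unique} conjugacy class, I would combine Lemma \ref{conjugacy_of_involutions} (which provides two distinct conjugacy classes of involutions when $q$ is odd, with representatives $s$ and $t$) with Lemma \ref{action_of_an_involution} applied to each of $s$ and $t$: each representative produces some $\alpha\in\mathbb{F}_{q^2}\setminus\mathbb{F}_q$ with $s(\alpha)=\alpha^q$ (respectively $t(\beta)=\beta^q$), and both $\alpha$ and $\beta$ satisfy $\Phi(\alpha)=\Phi(\beta)=\mu$.

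The only real obstacle is the lack of uniqueness at $\mu$, and it is already captured by the interaction of Lemmas \ref{action_of_an_involution} and \ref{conjugacy_of_involutions}: because the orbit $\mathbb{F}_{q^2}\setminus\mathbb{F}_q$ is non-regular (its point stabilizer has order $q+1$), the element $s$ with $s(\alpha)=\alpha^q$ is not determined by $\alpha$, and the two involution classes both contribute. Collecting the identity class, the $q-1$ classes produced by $\lambda\ne\mu,\infty$, and the two involution classes attached to $\mu$, we also recover the well-known count of $q+2$ conjugacy classes of $PGL(2,q)$ for odd $q$, which serves as a consistency check on the argument.
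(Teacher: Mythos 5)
Your proposal is correct and follows essentially the same route as the paper: the paper presents this theorem as a summary assembled from Theorem \ref{conjugacy_class_correspondence} (for $\lambda\neq\mu$) together with the discussion in the non-regular orbits subsection, which uses Lemma \ref{it_is_an_involution} to show $s$ is an involution at $\lambda=\mu$ and Lemmas \ref{conjugacy_of_involutions} and \ref{action_of_an_involution} to show both involution classes attach to $\mu$, exactly as you do. Your closing count of $q+2$ conjugacy classes matches the paper's concluding remark.
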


\section{Subgroups of $PGL(2,F)$ and $S_5$}\label{s5}

The subgroups of $PGL(2,F)$ have been classified. 
This was first done by Klein in the case $F=\mathbb{C}$.
It is not our intention here to state or discuss the classification;
for further details we refer the reader to \cite{HKT} (Theorem 11.91) for example.
In summary, the possible subgroups are $PSL(2,F)$, the subfield subgroups $PGL(2,E)$
where $E$ is a subfield of $F$,
cyclic groups, dihedral groups, $A_4$, $S_4$, $A_5$,
elementary abelian groups or semi-direct products of elementary abelian groups with cyclic groups.

We wish to present an elementary proof that 
$PGL(2,F)$ never contains $S_5$ as a subgroup, when $F$ is a field
of characteristic different from $5$.
If $F$ has characteristic 5 then the opposite is true:
since  $PGL(2,5)$ is isomorphic to $S_5$, 
clearly  $S_5$ appears as a subfield subgroup of $PGL(2,F)$.

Now we present the proof, which uses lemmas proved in Section \ref{pglactp1}.

\begin{thm} \label{no_S_5}
Let $F$ be a field of characteristic different from $5$. Then $S_5$ is not a subgroup of $PGL(2,F)$. 
\end{thm}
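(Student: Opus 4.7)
The plan is to suppose, for contradiction, that $S_5$ is a subgroup of $PGL(2,F)$, pass if needed to $PGL(2,\overline F)$, and split into three cases according to the characteristic of $F$, in each case extracting a contradiction from an element of a specific order that $S_5$ contains.

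In characteristic $2$, I would use the Jordan-form analysis of Section \ref{la} applied over $\overline F$. For any $A\in GL(2,\overline F)$ either $A$ has distinct eigenvalues $\alpha_1,\alpha_2$, in which case its image $s$ in $PGL(2,\overline F)$ has order equal to the order of $\alpha_1\alpha_2^{-1}$ in $\overline F^{*}$, which is odd because the torsion subgroup of $\overline{\F_2}^{*}$ has no $2$-part; or $A$ is similar to a single non-scalar Jordan block, in which case Lemma \ref{two_fixed} (or the computation in Section \ref{la}) gives that $s$ has order exactly $2$. So every non-identity element of $PGL(2,\overline F)$ has order $2$ or odd order. But $S_5$ contains the $4$-cycle $(1234)$, contradiction. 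In characteristic $3$, the same dichotomy gives that non-identity elements of $PGL(2,\overline F)$ have order $3$ or order coprime to $3$; this rules out the element $(12)(345)\in S_5$, which has order $6$.

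The main case is characteristic $0$ or a prime $p\notin\{2,3,5\}$, where $|S_5|=120$ is coprime to $\mathrm{char}\,F$. Here I would apply Lemma \ref{three_non_regular_orbits} to the action of $G=S_5$ on $\mathbb{P}^1(\overline F)$. By Lemma \ref{two_fixed} every non-identity element fixes exactly two points, so the orbit-counting computation from the proof of that lemma (counting pairs $(g,\omega)$ with $g\omega=\omega$, $g\neq 1$) gives
\[
k-\sum_{i=1}^{k}\frac{1}{|G_i|}=2-\frac{2}{|G|},
\]
where $k$ is the number of non-regular orbits and $G_i$ is a point stabilizer in the $i$-th non-regular orbit. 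The cases $k=0$ and $k=1$ are immediately impossible (no solution with $|G|>1$), and $k=2$ forces $|G_1|=|G_2|=|G|$, i.e. $G$ fixes two points of $\mathbb{P}^1(\overline F)$; by Lemma \ref{cyclic_stabilizer} this would make $S_5$ cyclic, absurd. Hence $k=3$, and Lemma \ref{three_non_regular_orbits} then forces $|G|\in\{12,24,60\}$, which fails since $|S_5|=120$, or $S_5$ to have a normal cyclic subgroup of index $2$. The only normal subgroup of index $2$ is $A_5$, which is non-abelian, giving the final contradiction.

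The main obstacle is the $k=3$ subcase in characteristic $0$ (or large $p$): one must both upper-bound $k$ by $3$ and extract the sharp structural conclusion, which is exactly what Lemma \ref{three_non_regular_orbits} supplies. The small-characteristic cases are straightforward once one notices that the $p$-part of the torsion of $\overline{\F_p}^{*}$ is trivial, so the only way to obtain an element of order divisible by $p$ in $PGL(2,\overline F)$ is through a unipotent matrix, which gives order exactly $p$.
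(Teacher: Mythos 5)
Your proof is correct and follows essentially the same route as the paper's: the same case split on the characteristic (elements of order divisible by $p$ in $PGL(2,F)$ have order exactly $p$, killing the $4$-cycle in characteristic $2$ and the order-$6$ element in characteristic $3$), followed by an analysis of the non-regular orbits of $S_5$ on $\mathbb{P}^1(\overline F)$ resolved by Lemma \ref{three_non_regular_orbits}. The only (valid) variation is your treatment of the case of at most two non-regular orbits, where you use the Burnside identity to force $k=3$ directly --- with $k=2$ implying $|G_1|=|G_2|=|G|$ and hence a cyclic $S_5$ via Lemma \ref{cyclic_stabilizer} --- whereas the paper instead observes that cyclic point stabilizers would have to contain elements of order $12$, $15$ or $20$, which $S_5$ lacks; both arguments are sound.
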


\begin{proof}
We recall that the proof of Lemma \ref{powersfix}
shows that if $p$ is a prime number, 
then an element of order  divisible by $p$ in $PGL(2,F)$ in fact has order equal to  $p$.
  
Suppose first that $F$ has characteristic 2.  
Since $S_5$ contains an element of order 4, it cannot be a subgroup of $PGL(2,F)$.

Next we suppose that $F$ has characteristic 3. 
By the first paragraph, an
element of order divisible by 3 in $PGL(2,F)$ in fact has order 3. 
Since $S_5$ contains an element of order 6, it cannot be a subgroup of $PGL(2,F)$.

For the rest of the proof, it suffices to assume that $F$ is algebraically closed of characteristic different from 2, 3 or 5. Suppose for the sake of contradiction that $S_5$ is a subgroup of
$PGL(2,F)$. Certainly, $S_5$ acts on the projective line $\mathbb{P}^1(F)$ 
and some of its orbits are not regular, as each of its elements is contained in some point stabilizer. Furthermore, $S_5$ does not act with exactly three non-regular orbits. For, it has no abelian subgroup of index 2 and its order is greater than 60, and it is thus excluded by Lemma \ref{three_non_regular_orbits}.

We now consider an action of $S_5$ on $\mathbb{P}^1(F)$ with exactly two non-regular orbits. Let
$\Omega_1$ and $\Omega_2$ be these two orbits, with corresponding point stabilizers
$G_1$ and $G_2$. Lemma \ref{cyclic_stabilizer} implies that $G_1$ and $G_2$ are cyclic. Furthermore, as each element of $S_5$ fixes two points by Lemma \ref{two_fixed}, 
$G_1$ and $G_2$ contain elements of order 3, 4 and 5. It follows that at least one of $|G_1|$ and $|G_2|$ is divisible  by 12, 15 or 20, and as each subgroup is cyclic, contains an element of order 12, 15 or 20.
However, $S_5$ contains no elements of these orders. Similarly, $S_5$ cannot act with only one non-regular orbit, as a point stabilizer would have to have order divisible by 60 and this is impossible. It follows that $S_5$ is not a subgroup of $PGL(2,F)$. 
\end{proof}

We remark that characteristic 3 is an interesting case, 
because $PGL(2,9)$ has order 720, which is the order of $S_6$, 
and contains a subgroup of index 2 isomorphic to $A_6$. Of course, 
 $PGL(2,9)$ cannot be isomorphic to $S_6$, since if it were then $S_5$ would be realizable
 as a subgroup of $PGL(2,9)$.

\section{Lang's Theorem}\label{lang}

In this section we will present an alternative proof of Theorem \ref{main3b}.
This proof uses a theorem of Lang on algebraic groups, and provides a different
context for Theorem \ref{main3b}.

Let $K$ denote the algebraic closure of $\F_q$.
Let $\sigma$ denote the Frobenius automorphism of $K$, given by $\sigma(x)=x^q$.
Then $\sigma$ acts naturally on $GL(2,K)$ by mapping
${a \ b\choose c\ d}$ to ${a^q \ b^q\choose c^q\ d^q}$.
As $GL(2,K)$ is a connected algebraic group, Lang's theorem \cite{L} implies that
every element $g\in GL(2,K)$ is expressible in the form
$g=\sigma(h)^{-1}h$ for some $h\in GL(2,K)$.

Let $\pi$ denote the natural projection homomorphism from $GL(2,K)$ to
$PGL(2,K)$. Applying $\pi$ to the equation $g=\sigma(h)^{-1}h$
implies that every element $x\in PGL(2,K)$ is expressible in the form
$x=\sigma(y)^{-1}y$ for some $y\in PGL(2,K)$.

Let $X=\mathbb{P}^1(K)$ denote the projective line over $K$. We let $\sigma$ act on $X$ in the obvious way, sending $K$ into itself and mapping $\infty$ to itself. 
Let $X^\sigma$ denote the fixed point set of $\sigma$ acting on $X$. 
We can identify $X^\sigma$ with the 
projective line over $\mathbb{F}_q$.
Likewise, let  $G=PGL(2,K)$ and let $G^\sigma$ be the fixed point subgroup of $\sigma$ acting on $G$. 
We can identify $G^\sigma$ with $PGL(2,q)$. 
Note that $G$ acts on $X$ and we have a compatibility relation with the action of $\sigma$:
\[
\sigma(g\alpha)=\sigma(g)\sigma(\alpha)
\]
for $g\in G$ and $\alpha$ in $X$.

 {\bf Alternative Proof of Theorem \ref{main3b}}
 
 The essential part of the proof (in part 1) of Theorem \ref{main3b} is to show
 that if $s\in PGL(2,q)$ then a solution $\alpha$ to $s(\alpha)=\alpha^q$ lies in 
 $\F_{q^r}$ and no smaller field if and only if $s$ has order $r$.
 Here we obtain this result and in addition we obtain a neat description of the solutions.

 \begin{lemma}
 Let $G^\sigma=PGL(2,q)$ and let $s$ be any element of $G^\sigma$  of order $r$.
 Then the solutions of $s(x)=\sigma(x)$ in $X$ lie in $\mathbb{P}^1 (\F_{q^r})$
 on the image of  $X^\sigma$ under an element of $PGL(2,\mathbb{F}_{q^r})$.
 Also, $G^\sigma$ is conjugate to a subgroup $H$ of $PGL(2,\mathbb{F}_{q^r})$
 via the same element, and $H$ acts on the solutions of $s(x)=\sigma(x)$ as
 $G^\sigma$ acts on $X^\sigma$.
 \end{lemma}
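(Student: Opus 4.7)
The plan is to apply Lang's theorem exactly once at the start and then translate everything in sight. Starting from $s \in G^\sigma$, invoke Lang's theorem, as recalled in the preamble, to write $s = \sigma(y)^{-1} y$ for some $y \in G = PGL(2,K)$. Substituting into $s(\alpha) = \sigma(\alpha)$ gives $\sigma(y)^{-1} y(\alpha) = \sigma(\alpha)$, i.e., $y(\alpha) = \sigma(y)(\sigma(\alpha))$. Using the stated compatibility $\sigma(g \cdot \alpha) = \sigma(g) \cdot \sigma(\alpha)$, this is exactly the condition $y(\alpha) = \sigma(y(\alpha))$, that is, $y(\alpha) \in X^\sigma$. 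So the solution set to $s(x) = \sigma(x)$ is precisely $y^{-1}(X^\sigma)$, at which point the statement comes down to showing that $y$ (hence $y^{-1}$) may be taken inside $PGL(2,\F_{q^r})$.

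The key computation is a short telescoping induction. From $s = \sigma(y)^{-1} y$ rearrange to $\sigma(y) = y s^{-1}$, and then, since $\sigma$ fixes $s$ pointwise,
\[
\sigma^2(y) = \sigma(y s^{-1}) = \sigma(y)\,\sigma(s^{-1}) = (y s^{-1})\, s^{-1} = y s^{-2},
\]
and by induction $\sigma^i(y) = y s^{-i}$ for all $i \ge 1$. Because $s$ has order $r$, this gives $\sigma^r(y) = y s^{-r} = y$, so $y$ is fixed by $\sigma^r$, and therefore $y \in PGL(2, \F_{q^r})$. Consequently the solutions $y^{-1}(X^\sigma)$ are contained in $\mathbb{P}^1(\F_{q^r})$ and are indeed the image of $X^\sigma$ under the element $y^{-1} \in PGL(2, \F_{q^r})$.

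For the last clause, set $H = y^{-1} G^\sigma y$. Since $y \in PGL(2, \F_{q^r})$ and $G^\sigma = PGL(2,q) \subseteq PGL(2, \F_{q^r})$, applying $\sigma^r$ to $y^{-1} g y$ fixes each factor, so $H \subseteq PGL(2, \F_{q^r})$. The bijection $\phi \colon X^\sigma \to y^{-1}(X^\sigma)$ defined by $\phi(\beta) = y^{-1}(\beta)$ intertwines the two actions: for $g \in G^\sigma$ and $h := y^{-1} g y \in H$,
\[
\phi(g(\beta)) = y^{-1}(g(\beta)) = (y^{-1} g y)(y^{-1}(\beta)) = h(\phi(\beta)).
\]
This matches the $G^\sigma$-action on $X^\sigma$ with the $H$-action on the solution set, as required.

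I do not foresee any serious obstacle: essentially everything reduces to the telescoping identity $\sigma^i(y) = y s^{-i}$. The only thing that needs a moment of thought is that Lang's theorem only pins $y$ down up to left multiplication by $G^\sigma$, but this ambiguity is harmless, since the computation above applies to any choice of $y$ and conjugating $y$ by an element of $G^\sigma$ just replaces $H$ by a conjugate subgroup of $PGL(2,\F_{q^r})$ inducing the same action on the (relabelled) solution set.
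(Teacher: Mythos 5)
Your proposal is correct and follows essentially the same route as the paper: a single application of Lang's theorem to write $s=\sigma(y)^{-1}y$, the translation of $s(\alpha)=\sigma(\alpha)$ into $y(\alpha)\in X^\sigma$, and the telescoping identity $\sigma^i(y)=ys^{-i}$ to place $y$ (your $y$ is the paper's $t$) in $PGL(2,\F_{q^r})$. The only cosmetic difference is that the paper verifies $H\subseteq PGL(2,\F_{q^r})$ directly from $\sigma(h)=shs^{-1}$, whereas you deduce it from $y\in PGL(2,\F_{q^r})$; both are fine.
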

 
 \begin{proof}
 Let $s$ be any element of $G^\sigma=PGL(2,q)$ and by Lang's theorem 
 write $s=\sigma(t)^{-1}t$ for $t$ in $G$. 
 Set $H=t^{-1}G^\sigma t$, a conjugate subgroup of $G^\sigma$ in $G$. Let
$h=t^{-1}gt$ be an element of $H$, where $g$ is in $G^\sigma$. We have then 
\[
\sigma(h)=\sigma(t)^{-1}g\sigma(t)=st^{-1}gts^{-1}=shs^{-1}.
\]
Assume  that $s$ has order $r$. Then
\[
\sigma^r(h)=s^rhs^{-r}=h.
\]
Thus all elements of $H$ are fixed by $\sigma^r$ and hence $H$ is a 
subgroup of $PGL(2,\mathbb{F}_{q^r})$.

Concerning the element $t$, write $\sigma(t)=tu$, where $u =s^{-1}$. 
Apply $\sigma$ successively to get $\sigma^r(t)=tu^r$. 
If $s$ and hence $u$ have order $r$, then
$\sigma^r$ fixes $t$ and $t$ must be in $PGL(2,q^r)$.

Let  $X_s=\{x\in X: s(x)=\sigma(x)\}$. Note that if $\alpha$ is in $X_s$, then
$s^r(\alpha)=\sigma^r(\alpha)$.
If $s$ has order $r$, then $X_s$ is in the projective line over $\mathbb{F}_{q^r}$. 
As before, we set $s=\sigma(t)^{-1}t$ by Lang's theorem, and we find that
$s(\alpha)=\sigma(\alpha)$ is equivalent to
\[
t(\alpha)=\sigma(t(\alpha)).
\]
Thus $t(\alpha)$ is in $X^\sigma$ and $\alpha$ is in $t^{-1}(X^\sigma)$. 
It follows  that $X_s=t^{-1}(X^\sigma)$, as required.
Furthermore, $H=t^{-1}G^\sigma t$ acts on $X_s$ as the projective group of the line $X^\sigma$. 
\end{proof}


\begin{thebibliography}{99}

\bibitem{Ab} 
Abhyankar, S.S.
Galois theory on the line in nonzero characteristic. Bull. Amer. Math. Soc. {\bf 27} (1992), 68-133.

\bibitem{A} Artin, E.
Galois Theory: Lectures delivered at the University of Notre Dame,
Notre Dame Mathematical Lectures, Number 2, 1944.

\bibitem{Bosma} 
Bosma, W.; Cannon, J.; Playoust, C.
 The Magma algebra system. I. The user language.
  J. Symbolic Comput. {\bf 24} (1997), 235-265.


\bibitem{B} Bluher, A.
Explicit Artin maps into $PGL_2$. arXiv:1906.08944v3, 1 Apr. 2021.

\bibitem{CKT} Chu, H.;  Kang, M. C.; Tan, E.T.
The invariants of projective linear group actions.
Bull. Austral. Math. Soc. {\bf 39} (1989), 107-117.


\bibitem{D}
Dickson, L. E. An invariantive investigation of irreducible binary modular forms. Trans.
Amer. Math. Soc. {\bf 12} (1911), 1-8.


\bibitem{F} Foster, K.
HT90 and "simplest" number fields.
Illinois J. Math. {\bf 55} (2011), 1621-1655.

\bibitem{GS} Gutierrez, J;  Sevilla, D.
On Ritt's decomposition theorem in the case of finite fields.
Finite Fields Appl. {\bf 12} (2006), 403-412

\bibitem{HKT} Hirschfeld, J.W.P.; Korchm\'aros, G.; Torres, F.
Algebraic curves over a finite field.
Princeton University Press, Princeton, NJ, 2008.


\bibitem{L}
Lang, S. . Algebraic Groups Over Finite Fields. American Journal of Mathematics, 78(3), (1956), 555-563. doi:10.2307/2372673


\bibitem{PRW} Panario, D.; Reis, L.; Wang, Q.
Construction of irreducible polynomials through rational transformations.
J. Pure Appl. Math. {\bf 224} (2020), 106241.

\bibitem{R}
Rivoire, P. Fonctions rationnelles sur un corps fini. Ann. Inst. Fourier {\bf 6} (1955/6),  121-124

\bibitem{ST} Stichtenoth, H.;  Topuzo\u{g}lu, A.
Factorization of a class of polynomials over finite fields.
Finite Fields Appl. {\bf 18} (2012), 108-122

\bibitem{vdw} Waerden, B.L. van der.
Modern Algebra. Vol. 1. Frederick Ungar Publishing Co., New York, NY, 1949.


\end{thebibliography}
\end{document}